\newtheorem{theorem}{Theorem}[section]
\newtheorem{lemma}[theorem]{Lemma}
\newtheorem{cor}[theorem]{Corollary}
\theoremstyle{definition}
\newtheorem{remark}[theorem]{Remark}
\newtheorem{definition}[theorem]{Definition}
\newtheorem{conjecture}[theorem]{Conjecture}
\newtheorem{example}[theorem]{Example}
\begin{document}

\title{Kodaira Dimension of Fiber Sums along Spheres}

\author{Josef G. Dorfmeister}
\address{Institut f\"ur Differentialgeometrie\\  Leibniz Universit\"at Hannover\\ 30167 Hannover, Germany}
\email{j.dorfmeister@math.uni-hannover.de}

\date{\today}
\begin{abstract}In this note we discuss the effect of the symplectic sum along spheres in symplectic four-manifolds on the Kodaira dimension of the underlying symplectic manifold.  We find that the Kodaira dimension is non-decreasing.  Moreover, we are able to obtain precise results on the structure of the manifold obtained from the blow down of an embedded symplectic $-4$-sphere. 

\end{abstract}

\maketitle


\section{Introduction}

Minimality of symplectic fiber sums in four-manifolds has been researched in \cite{U} and \cite{D} and precise criteria under which such a sum is non-minimal have been found.  Symplectic Kodaira dimension is defined on the minimal model of a symplectic manifold and, given our understanding of the behavior of symplectic sums, it is of interest what the Kodaira dimension of a given symplectic sum is.

This question has been researched for fiber sums along submanifolds of genus strictly greater than 0, see \cite{LY} and \cite{U2}.  In this note we complete the discussion for fiber sums along spheres.  Throughout we work on four-manifolds.  

The symplectic fiber sum is a surgery on two symplectic manifolds $X$ and $Y$, each containing a copy of a symplectic submanifold $V$.  The sum $X\#_VY$ is again a symplectic manifold.  Section \ref{pre} provides a brief overview of the symplectic sum construction and minimality of symplectic manifolds.  A minimal symplectic manifold contains  no exceptional spheres, i.e. no embedded symplectic spheres of self-intersection $-1$.  Furthermore we review the definition of the symplectic Kodaira dimension $\kappa(X)$ for a symplectic manifold $(X,\omega)$   and state some relevant results.

Section \ref{kodsection} is devoted to the proof of the main result, the genus 0 case of the following theorem (the higher genus proof can be found in \cite{LY}):

\begin{theorem}Let $M=X\#_VY$ be a symplectic fiber sum along an embedded symplectic surface $V$ in the four-manifolds $X$ and $Y$.  Then the symplectic Kodaira dimension is non-decreasing, i.e.
\[
\kappa(M)\ge \max\{\kappa(X),\kappa(Y),\kappa(V)\}.
\]
\end{theorem}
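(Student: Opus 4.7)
The plan is to treat the genus $0$ case, since the higher genus statement is already handled in \cite{LY}. When $V\cong S^2$ we have $\kappa(V)=-\infty$, so the bound $\kappa(M)\ge\kappa(V)$ is automatic, and it remains only to establish $\kappa(M)\ge\max\{\kappa(X),\kappa(Y)\}$.

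The first step is to invoke the fact that in any symplectic fiber sum the normal bundles of $V$ in $X$ and in $Y$ are dual, so $V_X^2+V_Y^2=0$. In particular at least one of these self-intersection numbers is nonnegative. By McDuff's classification of symplectic four-manifolds containing a symplectic sphere of nonnegative self-intersection, the corresponding manifold is rational or ruled and hence has $\kappa=-\infty$. Reindexing so that $V_Y^2\ge 0$, the inequality reduces to $\kappa(M)\ge\kappa(X)$.

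I would then set $n:=-V_X^2=V_Y^2\ge 0$ and split into three cases. If $n=0$, then $X$ also contains a symplectic sphere of self-intersection $0$, so $X$ is ruled and $\kappa(X)=-\infty$, giving the inequality for free. If $n=1$, then $V$ is an exceptional sphere in $X$ (so $\kappa(X)$ agrees with the Kodaira dimension of the blow-down of $V$), and the $(+1)$-sphere in $Y$ forces $Y$ to be rational. If $n\ge 2$, then $V_Y^2\ge 2$ again forces $Y$ to be rational. In the two nontrivial cases the idea is to use the minimality criteria of \cite{U} and \cite{D} to enumerate the exceptional classes in $M$, blow them down to obtain $M_{\min}$, and then compare canonical-class data directly with that of $X_{\min}$.

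The decisive step is the canonical-class computation on $M_{\min}$. Using the standard gluing relation for first Chern classes of a symplectic sum, together with the adjunction identity $K_X\cdot V=-2-V_X^2=n-2$, one can write $K_M\cdot\omega_M$ and $K_M^2$ as the corresponding quantities for $X$ plus contributions coming from the rational/ruled piece $Y$; a parallel calculation after the Usher--Dorfmeister blow-downs gives the analogous expressions on $M_{\min}$ in terms of $X_{\min}$. Verifying that the sign of $K\cdot\omega$ and the sign of $K^2$ can only increase under these operations is the main technical content, since this is what controls the jump in $\kappa\in\{-\infty,0,1,2\}$. I expect the principal obstacle to be the case $n=1$, where the exceptional class of $V\subset X$ interacts non-trivially with the sum: one must show that $M$ is, up to blow-ups identified via the results of \cite{D}, equivalent to a configuration built from $X_{\min}$ and a rational piece, so that $\kappa(M)=\kappa(X)$ rather than merely $\kappa(M)\ge\kappa(X)-1$.
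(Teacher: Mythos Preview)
Your opening reduction---using McDuff's theorem to force $\kappa(Y)=-\infty$ and hence reduce to $\kappa(M)\ge\kappa(X)$---is exactly what the paper does. However, your subsequent case analysis by $n=V_Y^2$ misidentifies where the difficulty lies.

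The $n=1$ case is not the obstacle; it is trivial. After passing to a relatively minimal pair, McDuff's classification forces $(Y,V_Y)=(\mathbb{CP}^2,H)$, so $M$ is the classical blow-down of $X$ along $V_X$, and $\kappa(M)=\kappa(X)$ by the very definition of Kodaira dimension on non-minimal manifolds. Likewise, for every $n\ne 1,4$ the only relatively minimal $(Y,V_Y)$ is an $S^2$-bundle with $V_Y$ a section (or a fiber when $n=0$); summing along a section does not change the diffeomorphism type of $X$, and summing along a fiber produces another $S^2$-bundle. The paper dispatches all of these in one line.

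The genuine content is $n=4$ with $(Y,V_Y)=(\mathbb{CP}^2,2H)$, the $(-4)$-blow-down, which you do not single out. Here your canonical-class computation is correct as far as it goes---one obtains $K_M^2=K_X^2+1$ and $K_M\cdot\omega_M=K_X\cdot\omega_X+\tfrac12\,\omega_X\cdot[V_X]$---but it does not suffice: to conclude $K_M^2\ge 0$ on the minimal model you need $K_X^2\ge -1$, i.e.\ that a relatively minimal $X$ with $\kappa(X)\ge 0$ carrying a symplectic $(-4)$-sphere is at most one blow-up away from minimal. The paper establishes this bound (Lemmas~\ref{k=t} and~\ref{n=t}) by a nontrivial $J$-holomorphic limit-curve argument, decomposing representatives of $K_X$ against $V_X$ and ruling out symplectic exceptional classes meeting $V_X$ negatively. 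This step is absent from your outline, and without it the $K^2$ inequality does not follow. You also omit a separate ingredient: when there \emph{do} exist smooth exceptional spheres meeting $V_X$ transversely in a single positive point, Gompf's flipping lemma (Lemma~\ref{flip}) identifies $M$ diffeomorphically with a blow-down of $X$, giving $\kappa(M)=\kappa(X)$ directly; the canonical-class computation is reserved for the case where no such spheres exist and $M$ is therefore minimal.
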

This theorem in genus 0 is reduced to a much simpler statement by a result of McDuff (Thm 1.4, \cite{M}):
\begin{theorem}
Let $M=X\#_VY$ be a symplectic fiber sum on four-manifolds along a symplectic hypersurface $V$ of genus 0 and $\kappa(X)\ge 0$.  Assume there exist no symplectic exceptional spheres disjoint from $V$ in $X$ or $Y$.    Then 
\[
\kappa(M)\ge \kappa(X).
\]

\end{theorem}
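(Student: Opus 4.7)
The plan is to use the hypothesis $\kappa(X)\ge 0$ to severely restrict both the self-intersection of $V$ and the structure of $Y$, and then to carry out the fiber-sum surgery inside $X$ via a sequence of moves that do not decrease Kodaira dimension. My first step is to pin down the self-intersection of $V$ in each factor: since $\kappa(X)\ge 0$, McDuff's theorem \cite{M} forbids embedded symplectic spheres of non-negative self-intersection in $X$, so $[V]_X\cdot[V]_X<0$; the matching normal-bundle condition required for a symplectic fiber sum then forces $[V]_Y\cdot[V]_Y>0$, and a second application of \cite{M} shows that $Y$ is rational or ruled, whence $\kappa(Y)=-\infty$.

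The second step is to analyze the pair $(Y,V)$. Since $Y$ is rational or ruled and $V$ is a positive self-intersection symplectic sphere, the classification of such pairs lets one write $(Y,V)$ as a sequence of symplectic blow-ups starting from a short explicit list of minimal models---for instance $(\mathbb{CP}^2,\text{line or conic})$ or a positive-degree section in an $S^2$-bundle. The hypothesis that no exceptional sphere of $Y$ is disjoint from $V$ forces every such blow-up to be centered on the proper transform of $V$ or on an exceptional divisor already meeting $V$, so the description of $(Y,V)$ is tractable and inductive.

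The third step is to combine this description with the criteria for (non-)minimality of symplectic fiber sums from \cite{U} and \cite{D} in order to reduce to the case where $(Y,V)$ is already one of the listed minimal pairs. At each reduction step I would blow down an exceptional curve $E\subset Y$ meeting $V$ while simultaneously adjusting $X$ by a blow-up along $V$ so that $X\#_V Y$ is unchanged up to symplectic deformation; because blow-ups do not change $\kappa$, the inequality under consideration is invariant under these moves, so it suffices to prove it in the minimal case.

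Finally, in each remaining base case I would identify $X\#_V Y$ explicitly as a controlled surgery on $X$---essentially a rational blow-down of the negative sphere $V\subset X$ in the $\mathbb{CP}^2$ cases, or an analogous ruled replacement in the $S^2$-bundle case---and verify that $\kappa$ does not drop, either by invoking known results on Kodaira dimension under symplectic rational blow-down or by computing $K\cdot\omega$ and $K^2$ on the minimal model directly from Gompf's gluing formula for $c_1$ of a symplectic sum. I expect the main obstacle to lie in the third step: controlling how proper transforms of curves in $Y$ can recombine with curves in $X$ across the gluing locus, so that no new symplectic $(-1)$-curves appear in $M$ which would pull its minimal model below $X_{\min}$ and hence below the Kodaira-dimension threshold one is trying to preserve.
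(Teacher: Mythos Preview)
Your overall architecture---pin down $(Y,V_Y)$, dispose of the easy cases, and treat the conic-sum $(\mathbb{CP}^2,2H)$ as the hard one---matches the paper's. But your plan has a genuine gap in the hard case, and it is not where you think it is.

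You locate the main obstacle in controlling new $(-1)$-spheres in $M$. That issue is already settled by the minimality theorem you cite: when $(X,V_X)$ is relatively minimal, $(Y,V_Y)=(\mathbb{CP}^2,2H)$, and $X$ has no two disjoint exceptional spheres each meeting $V_X$ once, the sum $M$ is minimal. What your step~4 actually needs, and never supplies, is a lower bound on $K_X^2$. Gompf's gluing formula gives $K_M^2=K_X^2+1$, and $K_M\cdot\omega_M>0$ follows easily; but for a \emph{minimal} $M$ with $\kappa(M)\ge 0$ you need $K_M^2\ge 0$, i.e.\ $K_X^2\ge -1$. Nothing in your outline prevents $X$ from being $X_m\#k\overline{\mathbb{CP}^2}$ with large $k$ and every exceptional class hitting $V_X$ with multiplicity $\ge 2$ (so relative minimality holds and there is nothing to flip); in that scenario $K_X^2=K_{X_m}^2-k$ can be as negative as you like, and your computation would falsely yield $\kappa(M)=-\infty$. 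The paper closes this gap with a $J$-holomorphic curve argument (its Lemmas~3.11 and~3.12): one shows that no symplectic exceptional class meets $V_X$ negatively (unless $V_X$ is an ``artificial'' sphere built from three blow-ups of a $(-1)$-curve), and then, by degenerating a $J$-holomorphic representative of $K_X$ (or of $q\pi^*K_{X_m}+\sum E_i$ when $b^+=1$) onto an almost complex structure making $V_X$ holomorphic, one forces $k\le 1$ when no exceptional curve meets $V_X$ in a single point. This is the substantive new input, and your proposal does not anticipate it.

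Two smaller remarks. Your step~3 is unnecessary: under the relative-minimality hypothesis, McDuff's theorem already puts $(Y,V_Y)$ into one of the four model pairs, so there is nothing to flip on the $Y$ side. And invoking ``known results on Kodaira dimension under symplectic rational blow-down'' is circular here---those are precisely what the theorem is establishing in the genus-$0$ case.
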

The proof of this theorem divides nicely into two cases:  $[V]^2\ne \pm 4$ and the $-4$-blow-down of an embedded symplectic $-4$-sphere.  In the former case, the result follows almost immediately.  The latter is proven in two steps:  If there exist disjoint smoothly embedded exceptional spheres transversely intersecting the hypersurface $V_X$ in a single positive point, then a result of Gompf (Lemma \ref{flip}) provides the key to the proof.  If no such spheres exist, then the proof relies on the fact that we can exclude the existence of any symplectic exceptional spheres which meet the hypersurface $V_X$ negatively.

As an auxiliary result, we obtain precise statements on the structure of the $-4$-blow-down $M$ and the manifolds $X$ admitting such a blow-down.  In particular, we find that if $\kappa(X)\ge 0$, then the only relatively minimal symplectic manifolds $X=X_m\#k\overline{\mathbb CP^2}$ containing embedded symplectic $-4$-spheres have $k\le 4$.  And if $X$ contains such a sphere with $k>0$, then $M=X_m\#(k-1)\overline{\mathbb CP^2}$, where $X_m$ is the minimal model of $X$.

Furthermore, the results lead to the following upper bound on the Kodaira dimension:

\begin{lemma}Let $M$ be the $-4$-blow-down of a symplectic $-4$-sphere $V_X$ for a relatively minimal pair $(X^4,V_X)$ with $\kappa(X)\ge 0$.  Then 
\[
\kappa(M)\le \kappa(X)+1.
\]

\end{lemma}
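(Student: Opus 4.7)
The approach is to split along the decomposition $X = X_m \# k\overline{\mathbb{CP}^2}$ furnished by the structural results recalled immediately above the lemma, treating the cases $k \geq 1$ and $k = 0$ separately.

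When $k \geq 1$, the identification $M = X_m \# (k-1)\overline{\mathbb{CP}^2}$ from the preceding discussion shows that $M$ and $X$ have the same minimal model $X_m$. Since the symplectic Kodaira dimension is invariant under symplectic blow-up and blow-down, $\kappa(M) = \kappa(X_m) = \kappa(X)$, and the desired bound $\kappa(M) \leq \kappa(X)+1$ is automatic.

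The substantive case is $k = 0$, where $X$ is itself minimal and still carries the embedded symplectic $-4$-sphere $V_X$. Here I would first apply adjunction, which for a symplectic sphere of self-intersection $-4$ yields $K_X \cdot V_X = 2g(V_X)-2-V_X^2 = 2$. If one had $\kappa(X) = 0$, then T.-J.~Li's classification of minimal symplectic four-manifolds of Kodaira dimension zero would force $K_X$ to be torsion in $H^2(X;\mathbb{Z})$, contradicting $K_X\cdot V_X = 2 \neq 0$. Consequently $\kappa(X) \geq 1$, and since the symplectic Kodaira dimension of any closed symplectic four-manifold is at most $2$, one concludes $\kappa(M) \leq 2 \leq \kappa(X)+1$.

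The main obstacle, or more precisely the only nontrivial content, is exactly this $k = 0$ case: ruling out $\kappa(X) = 0$ in the presence of an embedded symplectic $-4$-sphere. This rests on the interaction between adjunction on $V_X$ and the rational triviality of the canonical class for minimal symplectic four-manifolds with $\kappa = 0$. The $k \geq 1$ case is essentially bookkeeping, built directly on the structural description of the $-4$-blow-down developed for the main theorem.
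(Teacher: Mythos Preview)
There is a genuine gap in the $k \geq 1$ case. The identification $M \cong X_m \# (k-1)\overline{\mathbb{CP}^2}$ is established in the paper only under the hypothesis $n_{sm} > 0$ (Corollary~\ref{n2blow}); the sentence you are quoting from the introduction is an imprecise summary. By Lemma~\ref{n=t} the case $n_{sm} = 0$, $k = 1$ is not excluded, and there the identification fails: Lemma~\ref{k=0}(1) shows that for $\kappa(X) = 0$, $k = 1$, $n_{sm} = 0$, the $-4$-blow-down $M$ is an order-$2$ logarithmic transform of $X_m$ with $\kappa(M) = 1 \neq 0 = \kappa(X_m)$. This is exactly the case in which the bound $\kappa(M) \leq \kappa(X)+1$ is attained with equality, so it cannot be recovered from an argument concluding $\kappa(M) = \kappa(X)$.

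The paper's own route avoids this by splitting on $\kappa(X)$ rather than on $k$: for $\kappa(X) \geq 1$ the inequality $\kappa(M) \leq 2 \leq \kappa(X)+1$ is automatic, and for $\kappa(X) = 0$ Lemma~\ref{k=0} handles all subcases directly, including the $n = 0$, $k = 1$ log-transform case. Your $k = 0$ argument is correct and matches the paper's observation that no minimal manifold of Kodaira dimension $0$ contains a symplectic $-4$-sphere; but combining it with your $k \geq 1$ argument still leaves the $k = 1$, $n_{sm} = 0$ case uncovered, and that is precisely where the content of the lemma lies.
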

This result needs only to be proven for $\kappa(X)=0$, see Lemma \ref{k=0}.  Actually, this result also holds for irrationally ruled manifolds, see Cor. \ref{irrat}.  Furthermore, the calculations in Section \ref{rat} lead to the following conjecture:
\begin{conjecture}\label{conj}
Let $M$ be the $-4$-blow-down of an embedded symplectic $-4$-sphere $V_X$ for a relatively minimal pair $(X,V_X)$ with $\kappa(X)=-\infty$.  Then 
\[
\kappa(M)\le 0.
\]
\end{conjecture}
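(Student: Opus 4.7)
The plan is to prove Conjecture \ref{conj} by a classification of relatively minimal pairs $(X,V_X)$ with $\kappa(X)=-\infty$ combined with invariant computations for $M$. By the Li--Liu classification of symplectic $4$-manifolds with $\kappa=-\infty$, $X$ is either a rational surface (a blow-up of $\mathbb{CP}^2$ or $S^2\times S^2$) or a blow-up of an $S^2$-bundle over a positive-genus surface. The first step is to enumerate the possible classes $[V_X]$ with $V_X^2=-4$. In the rational case, writing $V_X=aH-\sum b_iE_i$, the constraints $V_X^2=-4$ and $K_X\cdot V_X=2$ (from the genus-$0$ adjunction) together with positivity on every exceptional class restrict $V_X$ to finitely many options for each $k$. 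Relative minimality of the pair should then yield an absolute bound on $k$, parallel to the $k\le 4$ bound mentioned in the introduction for $\kappa(X)\ge 0$.

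For the topological invariants of the blowdown, the $-4$ rational blowdown satisfies $\chi(M)=\chi(X)-1$ and $\sigma(M)=\sigma(X)+1$, hence $K_M^2 = K_X^2+1$ via the identity $K^2=2\chi+3\sigma$. Combined with the bound on $k$ from Step~1 this gives an explicit upper bound on $K_M^2$ in each case of the classification. In particular, one expects $K_M^2\le 0$ in all cases (after accounting for any exceptional spheres that might appear in $M$ and passing to its minimal model), which would rule out $\kappa(M)=2$.

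The heart of the argument is excluding $\kappa(M)=1$. For this I would look for an embedded symplectic sphere of non-negative self-intersection in $M$: such a sphere forces $\kappa(M)\le 0$ by the standard characterization of symplectically uniruled $4$-manifolds. Candidate spheres come from two sources: (i) symplectic spheres in $X$ disjoint from $V_X$, which descend unchanged to $M$, and (ii) spheres in $X$ meeting $V_X$ transversely, which can be combined with discs in the rational ball via a Gompf-type flip as in Lemma \ref{flip}. In the rational or ruled setting such spheres are abundant in $X$, and the task is to show that at least one such class survives the blowdown with non-negative self-intersection.

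The main obstacle is the gluing/tracking step. The rational ball replacing the neighborhood of $V_X$ contains no symplectic spheres of its own, so the argument must follow how classes from $X$ push forward through the blowdown and ensure the image admits a symplectic sphere representative. Since Fintushel--Stern rational blowdown constructions are precisely the origin of many exotic $4$-manifolds with $\kappa\ge 1$, the conjecture is asserting that such phenomena cannot arise from a \emph{single} $-4$-sphere inside a rational or ruled manifold. Verifying this will likely require Seiberg--Witten input or $J$-holomorphic techniques in the spirit of McDuff's Theorem~1.4 of \cite{M} used earlier, and handling the rational cases with small $K_X^2$ (such as $k$ near the maximum allowed value) is likely to be the most delicate part, since there $K_M^2$ is nearest to $0$ and the $\kappa(M)=1$ alternative is hardest to exclude.
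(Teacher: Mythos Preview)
The statement you are attempting to prove is labeled a \emph{conjecture} in the paper and is not proven there. The paper establishes only the weaker bound $\kappa(M)\le 1$ in the rational case (Lemma~\ref{rat1}), together with $\kappa(M)=-\infty$ in the irrational ruled case (Cor.~\ref{irrat}); the step from $\kappa(M)\le 1$ to $\kappa(M)\le 0$ is explicitly left open, precisely because the symplectic $-4$-sphere classes in $\mathbb{CP}^2\#10\overline{\mathbb{CP}^2}$ cannot be fully classified.

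Your plan also diverges from the paper's route to the weaker bound. The analogy you draw to the $k\le 4$ bound is misleading: that bound (Lemma~\ref{n=t}) depends on $K_X$, or a multiple of its pullback from the minimal model, being represented by a $J$-holomorphic curve, which uses $\kappa(X)\ge 0$ and fails outright for rational $X$. There is no structural bound on $k$ coming from relative minimality alone in the rational setting. Instead, the paper proceeds case by case: for $k\le 9$ it invokes Wall's theorem that diffeomorphisms realise the full automorphism group of the intersection lattice, reduces to the finite orbit list of $-4$-classes in Table~\ref{orbits}, and checks each orbit by hand to show that every symplectically representable class has $n_{sm}>0$ (hence $\kappa(M)=-\infty$). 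Only $k=10$ survives this exhaustion, and there $K_M^2=0$ yields $\kappa(M)\le 1$ directly; Wall's result does not extend to $k=10$, so the orbit method stops.

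You correctly identify excluding $\kappa(M)=1$ as the crux, and your idea of producing a symplectic sphere of non-negative square in $M$ is reasonable in spirit. But the proposal gives no mechanism for guaranteeing such a sphere in the one case that actually matters, namely $X=\mathbb{CP}^2\#10\overline{\mathbb{CP}^2}$ with $n_{sm}=0$: by the very definition of $n_{sm}=0$ no exceptional sphere meets $V_X$ once transversally, so your source~(ii) via Lemma~\ref{flip} is unavailable, and the abundant rational curves in $X$ generically meet $V_X$ with high multiplicity, so source~(i) is not obviously available either. This is exactly the gap the paper leaves as a conjecture, and your outline does not close it.
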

If $X$ is rational, i.e. $X=\mathbb CP^2\#k\overline{\mathbb CP^2}$ ($k\ge 0$) or $S^2\times S^2$, then it is shown in Section \ref{rat} that $\kappa(M)<2$.  

Finally, we address the question of which manifolds $X$ can be rationally blown-down to produce a manifold with $\kappa(M)=0$.  In \cite{U2}, it was shown that fiber sums along symplectic submanifolds with positive genus do not produce any new diffeomorphism classes of symplectic manifolds having symplectic Kodaira dimension 0.  In Section \ref{kod0} we show that this also holds true in the genus 0 case.  More precisely, we show that the only minimal manifold $M$ with $\kappa(M)=0$ which can be obtained from a fiber sum along a sphere is the Enriques surface.  In the non-minimal case, we find that  $M$ is diffeomorphic to  $Y\#\overline{\mathbb CP^2}$, with $Y$ a K3 surface or an Enriques surface or an as yet unknown surface with $\kappa(Y)=0$,  or  $X_m\#3\overline{\mathbb CP^2}$ with $X_m$ any minimal manifold with $\kappa(X_m)=0$.

{\bf Acknowledgments}  I gratefully acknowledge the patient support of my advisor Prof. Tian Jun Li.  I also would like to thank Prof. Bob Gompf for pointing out Lemma \ref{nge2} and describing the underlying method.  I also thank Weiyi Zhang and Weiwei Wu for their interest.  Part of this work was completed while at the University of Minnesota.  The author is supported through the Graduiertenkolleg 1463: Analysis, Geometry and String Theory.  

\section{Preliminaries\label{pre}}

This paper deals with the change in Kodaira dimension under symplectic sums along spheres.  In this section we provide an overview of the symplectic sum construction and the relevant results allowing for the definition of symplectic Kodaira dimension.  

\subsection{Symplectic fiber sum}

The symplectic fiber sum is a smooth surgery operation which is performed in the symplectic category.    

Consider first the smooth surgery.  Let $X_1$, $X_2$ be $2n$-dimensional smooth
manifolds. Suppose we are given
codimension 2 embeddings $j_i:V\rightarrow X_i$ of  a
smooth closed oriented manifold $V$ with normal bundles $N_iV$.
Assume that the Euler classes of the normal bundle of the embedding
of $V$ in $X_i$ satisfy $e(N_1V)+e(N_2V)=0$ and fix a
fiber-orientation reversing bundle isomorphism $\Theta:
N_1V\rightarrow N_2V$.  By canonically identifying the normal
bundles with a tubular neighborhood $\nu_i$ of $j_i(V)$, we obtain
an orientation preserving diffeomorphism $\varphi: \nu_1\backslash
j_1(V)\rightarrow \nu_2\backslash j_2(V)$ by composing $\Theta$ with
the diffeomorphism that turns each punctured fiber inside out.  This
defines a gluing of $X_1$ to $X_2$ along the embeddings of $V$ denoted
$M=X_1\#_{(V,\varphi)}X_2$.  The diffeomorphism type of this manifold is
determined by the embeddings $(j_1,j_2)$ and the map $\Theta$.  

In the symplectic category, Gompf (\cite{G}) and McCarthy-Wolfson (\cite{MW}) proved, that if $X_1$ and $X_2$ admit symplectic forms $\omega_1,\omega_2$ resp.  and if the embeddings $j_i$ are symplectic with respect to these forms, then
the smooth manifold $M=X_1\#_{(V,\varphi)}X_2$ comes with a symplectic form
$\omega$ created from $\omega_1$ and $\omega_2$.

This procedure can be reversed, this is called the symplectic cut, see \cite{Le} for details.  Throughout this paper, with the exception of Lemma \ref{flip}, we suppress the map $\varphi$ in the notation and work in the symplectic category.  Hence we refer only to the fiber sum $M=X_1\#_VX_2$.

\subsection{Notation}

Let $(X,\omega_X)$ be a smooth, closed, symplectic 4-manifold.  We shall generally not distinguish between a symplectic form $\omega$ and a symplectic class $[\omega]$, i.e. a cohomology class which can be represented by a symplectic form.  Both shall be denoted by $\omega$.  Let $V$ be a $2$-dimensional smooth closed manifold such that $X$ contains a copy $V_X$ of $V$ which is symplectic with respect to $\omega_X$.  Such a triple $(X,V_X,\omega_X)$ will be called a symplectic pair and we will often suppress the $\omega_X$ in the notation henceforth.  Denote the homology class of $V_X$ by $[V_X]\in H_2(X)$ and the fiber sum of two symplectic pairs $(X,V_X)$ and $(Y,V_Y)$ along $V\cong V_X\cong V_Y$ by  $M=X\#_VY$.

\subsection{Minimality of 4-Manifolds}

\begin{definition} For the smooth manifold $X$, let ${\mathcal E}$ be the set of cohomology
classes whose Poincar\'e dual are represented by smoothly embedded
spheres of self-intersection $-1$.  Similarly, for the symplectic manifold $(X,\omega)$, let ${\mathcal E}_\omega$ denote the set
\[
{\mathcal E}_{\omega}=\{E\in {\mathcal
E}|\hbox{ $E$ is represented by an embedded $\omega-$symplectic
sphere}\}.
\]
$X$ is said to be smoothly (symplectically)
minimal if ${\mathcal E}=\emptyset$ (${\mathcal E}_\omega=\emptyset$).  
\end{definition}

An embedded sphere of self-intersection $-1$ is called an exceptional sphere, its class an exceptional class.  A basic fact proven using SW theory (\cite{T1},
\cite{LL}, \cite{TJL2}) is:

\begin{lemma}
  ${\mathcal E}_{\omega}$ is
empty if and only if ${\mathcal E}$ is empty. In other words, $(X,
\omega)$ is symplectically minimal if and only if $X$ is smoothly
minimal.
\end{lemma}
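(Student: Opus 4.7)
The plan is to prove the two directions of the biconditional separately. The \emph{if} direction, $\mathcal{E} = \emptyset \Rightarrow \mathcal{E}_\omega = \emptyset$, is immediate from the containment $\mathcal{E}_\omega \subseteq \mathcal{E}$: any symplectically embedded $-1$-sphere is in particular a smoothly embedded $-1$-sphere.

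For the \emph{only if} direction I would argue by contrapositive: assuming $\mathcal{E} \neq \emptyset$, pick $E \in \mathcal{E}$ and produce a symplectic exceptional class from it. The first step is the Seiberg--Witten input from \cite{T1} and \cite{LL}: the class Poincar\'e dual to a smoothly embedded $-1$-sphere (after passing to the relevant blow-up decomposition and applying the SW blow-up formula) has nonzero Seiberg--Witten invariant. Taubes' nonvanishing theorem, applied to this class, then forces $E \cdot [\omega] \neq 0$; replacing $E$ with $-E$ if necessary we may assume $E \cdot [\omega] > 0$.

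The second step is the SW $=$ Gr correspondence from \cite{TJL2}: for a generic $\omega$-compatible almost complex structure $J$, the nonvanishing SW invariant forces the class $E$ to be represented by a (possibly reducible) $J$-holomorphic current, which is in particular symplectic. Positivity of intersections together with $E \cdot E = -1$ and the genus/adjunction formula rule out reducibility and force any component in the class $E$ to be a smoothly embedded $J$-holomorphic sphere of self-intersection $-1$. Hence $E \in \mathcal{E}_\omega$.

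The main obstacle is the first step: translating the purely topological datum of a smoothly embedded $-1$-sphere into the SW-theoretic conclusion $E \cdot [\omega] \neq 0$. This is where the nontrivial machinery of \cite{T1} and \cite{LL} enters, via the interaction of the SW blow-up formula with Taubes' constraint that basic classes pair nontrivially with $\omega$. Once this sign/nonvanishing condition is secured, the remainder is a routine application of the SW $=$ Gr dictionary together with adjunction and positivity of intersections.
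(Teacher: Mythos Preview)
The paper does not give a proof of this lemma at all: it is stated as ``a basic fact proven using SW theory'' with references to \cite{T1}, \cite{LL}, and \cite{TJL2}, and no argument is supplied. Your proposal is therefore not competing with a proof in the paper but is a sketch of the argument contained in those references, and in broad strokes it is the correct one: the trivial inclusion handles one direction, and for the other one combines Taubes' nonvanishing with the blow-up formula and SW\,$\Rightarrow$\,Gr to produce a $J$-holomorphic exceptional sphere in the class $\pm E$.

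One point of imprecision in your first step: it is not the exceptional class $E$ itself that carries a nonzero Seiberg--Witten invariant. Rather, Taubes shows that $\pm K_\omega$ is a basic class, and the SW blow-up formula (applied to the smooth splitting $X = Y \# \overline{\mathbb{CP}^2}$ coming from the smooth $-1$-sphere) then forces $K_\omega \cdot E = \pm 1$. After adjusting the sign of $E$ so that $K_\omega \cdot E = -1$ and $\omega \cdot E > 0$, the class $E$ has the numerical profile of an exceptional curve, and the Gromov--Taubes machinery (or the dimension count for $J$-holomorphic spheres in a class with $c_1 = 1$) produces an embedded $\omega$-symplectic sphere in that class. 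Your second step, invoking adjunction and positivity of intersections to rule out reducible or higher-genus representatives, is the right way to close the argument. So the proposal is correct in spirit, with the caveat that the SW input should be phrased in terms of $K_\omega$ being basic rather than $E$ itself.
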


From a topological perspective, $X$ is minimal if it is not the connected sum of
another manifold $Y$  with $\overline{\mathbb {CP}^2}$.

\begin{definition} 
The manifold $X_m$ is called a minimal model of $X$ if $X_m$ is  minimal and $X$ is the
connected sum $X_m\#k\overline{\mathbb {CP}^2}$ for some $k>0$.
\end{definition}

A manifold $X$ is called rational if its underlying smooth manifold is either $S^2\times S^2\#k\overline{\mathbb CP^2}$ or $\mathbb CP^2\#k\overline{\mathbb CP^2}$ for some $k\ge 0$.  A manifold $X$ is ruled if the underlying smooth manifold is a connected sum of a $S^2$-bundle over a Riemann surface with $k$ copies of $\overline{\mathbb CP^2}$, $k\ge 0$.  The following two results will be useful:

\begin{lemma}\label{inters}\begin{enumerate}Assume $X$ is not rational or ruled.
\item (Thm. 1.5, \cite{M2}) $X_m$ is unique.  
\item (Cor. 3, \cite{TJL2})
No two distinct symplectic exceptional spheres intersect. Moreover,  ${\mathcal E}=\{\pm E_i\}$ where $E_i$ are the generators of the $\overline{\mathbb CP^2}$.
\end{enumerate}
\end{lemma}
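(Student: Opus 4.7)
My plan is to deduce both parts from Seiberg--Witten theory, using the fact that for a symplectic $4$-manifold $(X,\omega)$ which is neither rational nor ruled, Taubes' theorem exhibits $\pm K_\omega$ as basic classes and Liu's result gives $K_\omega\cdot\omega\geq 0$. Part (2) is the central content; Part (1) then follows formally.

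For Part (2), I would first settle non-intersection by contradiction. Positivity of intersections forces $E_1\cdot E_2\geq 0$ for distinct $E_1,E_2\in\mathcal{E}_\omega$, with equality iff the spheres are disjoint for some compatible almost complex structure. Assume $E_1\cdot E_2\geq 1$. Fix a minimal-model expression $X=X_m\#k\overline{\mathbb{CP}^2}$ with standard exceptional generators $E_1',\dots,E_k'$. The SW blow-up formula identifies the basic classes of $X$ with the translates $K_{X_m}+\sum\varepsilon_iE_i'$ ($\varepsilon_i=\pm 1$), and by Taubes the exceptional classes satisfy the adjunction identities $E^2=-1$ and $E\cdot K_\omega=-1$. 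Writing $E_1,E_2$ in terms of the orthogonal decomposition $H^2(X)=H^2(X_m)\oplus\bigoplus_i\mathbb{Z}E_i'$, the assumption $E_1\cdot E_2\geq 1$ is incompatible with the two adjunction identities together with $K_{X_m}^2\geq 0$ (which follows from non-ruledness). For the classification $\mathcal{E}=\{\pm E_i'\}$, I would apply the same orthogonal decomposition to an arbitrary $E\in\mathcal{E}$: the identities $E^2=-1$ and $E\cdot K_\omega=-1$, combined with $K_{X_m}^2\geq 0$ and the signature of $H^2(X_m)$, force the $X_m$-component to vanish and exactly one coefficient to be $\pm 1$.

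Part (1) is then a direct consequence. Given two minimal decompositions $X=X_m\#k\overline{\mathbb{CP}^2}=X_m'\#k'\overline{\mathbb{CP}^2}$, each set of standard exceptional generators lies in $\mathcal{E}$, so by Part (2) the two sets coincide up to signs and permutation. Thus $k=k'$ and the orthogonal complements $H^2(X_m)$ and $H^2(X_m')$ agree inside $H^2(X)$; a standard rigidity argument (using that $X_m,X_m'$ are minimal and share canonical class) upgrades this to a diffeomorphism $X_m\cong X_m'$.

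\textbf{Main obstacle.} I expect the hard part to be handling the case $b^+(X)=1$ uniformly: there basic classes depend on the chamber, and one must work in the small-perturbation chamber determined by $\omega$ and invoke Taubes' wall-crossing to propagate non-vanishing of SW invariants through blow-downs. This is also the step which forces the exclusion of rational and ruled manifolds, since those are precisely the cases where $K_\omega\cdot\omega<0$ is possible and the basic-class calculus above collapses.
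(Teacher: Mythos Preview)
The paper does not prove this lemma: both parts are quoted from the literature, with Part~(1) attributed to McDuff \cite{M2} and Part~(2) to Li \cite{TJL2}, and no argument is supplied beyond those citations. So there is no ``paper's own proof'' against which to compare your proposal.

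That said, your sketch for Part~(2) is essentially the argument of \cite{TJL2}: one uses Taubes' theorem to exhibit $K_\omega$ as a basic class, the blow-up formula to constrain how any smooth $-1$-sphere pairs with each basic class, and then the orthogonal decomposition $H^2(X)=H^2(X_m)\oplus\bigoplus_i\mathbb{Z}E_i'$ together with minimality of $X_m$ to pin down $\mathcal{E}$. One correction: for a merely smooth exceptional class $E\in\mathcal{E}$ you only obtain $K_\omega\cdot E=\pm 1$, not $K_\omega\cdot E=-1$ as you wrote; this sign ambiguity is precisely why the conclusion is $\mathcal{E}=\{\pm E_i\}$ rather than $\{E_i\}$, and your argument should track it. Your derivation of Part~(1) from Part~(2) is a legitimate alternative route, but it is not McDuff's original argument in \cite{M2}, which predates Seiberg--Witten theory and proceeds via pseudoholomorphic curve techniques. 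The step you label a ``standard rigidity argument'' (passing from coincidence of the exceptional-class sets to a diffeomorphism $X_m\cong X_m'$) is the only place requiring genuine care; it goes through once one knows that blowing down a maximal orthogonal family of exceptional spheres is well-defined up to diffeomorphism, but this is not entirely trivial and deserves a reference or a sentence of justification.
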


The process for obtaining a minimal manifold from $X$ is called blowing down.  This removes the $-1$-sphere and can be obtained as a fiber sum of $(X,E)$ with $(\mathbb CP^2,H)$, i.e. the blown-down manifold $Y=X\#_{E=H}\mathbb CP^2$.  It can be shown, that after blowing down a finite collection of exceptional curves, a minimal manifold is obtained, see Thm 1.1, \cite{M}.

The symplectic fiber sum involves a submanifold $V_X\subset X$.  It is therefore reasonable to consider minimality with respect to this submanifold:

\begin{definition} 
The pair $(X,V_X)$ is called relatively minimal if there exist no exceptional curves $E$ such that $E\cdot [V]=0$.
\end{definition}

The following result shows that we can blow down $X$ in such a manner that $V$ is preserved and the result is relatively minimal:

\begin{lemma}[Thm 1.1ii, \cite{M}] Every symplectic pair $(X,V_X)$ covers a relatively minimal symplectic pair $(\tilde X,V_X)$ which may be obtained by blowing down a finite set of exceptional curves disjoint from $V$.

\end{lemma}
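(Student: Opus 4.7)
The plan is iterative: starting from $(X,V_X)$, either the pair is already relatively minimal and we are done, or there exists an exceptional class $E\in\mathcal E$ with $E\cdot[V_X]=0$. In the latter case the lemma recalled above ensures $E\in\mathcal E_\omega$, so $E$ is represented by some embedded $\omega$-symplectic sphere. The heart of the argument is to upgrade the numerical condition $E\cdot[V_X]=0$ to the geometric statement that a representative \emph{actually disjoint} from $V_X$ can be found; once this is done, we blow it down via the fiber sum $X\#_{E=H}\mathbb{CP}^2$ and iterate.

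To achieve disjointness I would choose an $\omega$-compatible almost complex structure $J$ for which $V_X$ is $J$-holomorphic and which is generic on the complement of a neighborhood of $V_X$; such $J$ form a non-empty open set in the space of compatible structures. Taubes' identification of Seiberg--Witten and Gromov invariants, in the form used in \cite{TJL2}, then produces a $J$-holomorphic cycle representing $E$. After extracting an irreducible exceptional component $C$, whose existence is guaranteed by the structure of $\mathcal E$ described in Lemma \ref{inters}, positivity of intersections applied to the two distinct $J$-holomorphic curves $C$ and $V_X$ yields
\[
0 = [C]\cdot[V_X] \;\ge\; \#(C\cap V_X) \;\ge\; 0,
\]
forcing $C\cap V_X=\emptyset$.

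With $C$ disjoint from $V_X$, the symplectic blow-down $\tilde X = X\#_{C=H}\mathbb{CP}^2$ is performed in a neighborhood disjoint from $V_X$, so a symplectic copy of $V_X$ survives in $\tilde X$ and yields a new symplectic pair $(\tilde X,V_X)$ with $b_2^-(\tilde X)=b_2^-(X)-1$. Iterating, the procedure terminates in at most $b_2^-(X)$ steps at a pair with no exceptional class $E$ satisfying $E\cdot [V_X]=0$, that is, at a relatively minimal pair.

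The main obstacle is the step carried out in the second paragraph: one cannot take $J$ fully generic since $V_X$ is constrained to remain $J$-holomorphic, so one must work within the restricted space of such $J$'s and verify that Taubes' machinery still produces a usable irreducible representative of the chosen exceptional class, along with the needed transversality to invoke positivity of intersections. This is the technical input where the methods of \cite{TJL2} and \cite{M} enter in an essential way; everything else in the argument is bookkeeping with $b_2^-$ and the definition of the symplectic blow-down as a fiber sum with $(\mathbb{CP}^2,H)$.
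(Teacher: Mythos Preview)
The paper does not prove this lemma; it is stated without proof and attributed to Theorem~1.1(ii) of McDuff~\cite{M}. There is therefore no ``paper's own proof'' to compare your proposal against---the paper simply imports the result.

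Your sketch follows the standard McDuff argument and is broadly correct: make $V_X$ $J$-holomorphic, produce a $J$-holomorphic representative of the exceptional class, and invoke positivity of intersections to obtain disjointness, then iterate using the drop in $b_2^-$. One point that needs care: your appeal to Lemma~\ref{inters} to guarantee an irreducible exceptional component is misplaced, since that lemma carries the standing hypothesis that $X$ is not rational or ruled, while the present statement is for arbitrary symplectic pairs. In the rational or ruled case the set $\mathcal E_\omega$ is much larger and its elements can intersect, so the structural description $\mathcal E=\{\pm E_i\}$ is unavailable. McDuff handles this directly via Gromov compactness: for $J$ generic among those making $V_X$ holomorphic, the limit of embedded exceptional spheres in class $E$ is a cusp-curve whose components are analyzed, and one shows some component is again an embedded exceptional sphere with zero intersection with $[V_X]$, hence disjoint from $V_X$ by positivity of intersections. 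You correctly flag this constrained-$J$ step as the technical crux and defer to \cite{M} and \cite{TJL2}; that is effectively what the paper itself does by citing the result outright.
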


The minimality of symplectic fiber sums is described by the following Theorem:

\begin{theorem}[\cite{U}, \cite{D}]\label{minimal}
Let $M$ be the symplectic fiber sum $X\#_VY$ of the symplectic manifolds $(X,\omega_X)$ and $(Y,\omega_Y)$ along an embedded symplectic surface $V$ of genus $g\ge 0$.  
\begin{enumerate}
\item The manifold $M$ is not minimal if
\begin{itemize}
\item $X\backslash V_X$ or $Y\backslash V_Y$ contains an embedded symplectic sphere of self-intersection $-1$ or
\item $X\#_VY=Z\#_{V_{\mathbb CP^2}}\mathbb CP^2$ with $V_{\mathbb CP^2}$ an embedded $+4$-sphere in class $[V_{\mathbb CP^2}]=2[H]\in H_2(\mathbb CP^2,\mathbb Z)$ and $Z$ has at least 2 disjoint exceptional spheres $E_i$ each meeting the submanifold $V_Z\subset Z$ positively and transversely in a single point with $[E_i]\cdot [V_X]= 1$.
\end{itemize}
\item If $X\#_VY=Z\#_{V_B}B$ where $B$ is a $S^2$-bundle over a genus $g$ surface and $V_B$ is a section of this bundle then $M$ is minimal if and only if $Z$ is minimal.  
\item In all other cases $M$ is minimal.
\end{enumerate}
\end{theorem}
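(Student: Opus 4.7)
The plan is to establish the trichotomy in two stages. First, I exhibit explicit exceptional spheres in $M$ under each sub-case of part (1), giving non-minimality. Second, for parts (2) and (3), I rule out exceptional spheres in $M$ by a pseudoholomorphic analysis of the neck of the symplectic sum. Throughout I use the lemma already quoted that symplectic and smooth minimality coincide, so exceptional classes may be realized either smoothly or by $J$-holomorphic spheres as convenient.

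For part (1), the first bullet is immediate: a symplectic exceptional sphere in $X\setminus V_X$ (or $Y\setminus V_Y$) is disjoint from the surgery region and persists verbatim in $M$. The second bullet demands a genuine construction. Under its hypotheses pick disjoint exceptional spheres $E_1,E_2\subset Z$ each meeting $V_Z$ positively and transversely in a single point. On the other side, in $\mathbb{CP}^2$ the class $[V_{\mathbb{CP}^2}]=2[H]$ is represented by a smooth conic, and a generic line $\ell\subset\mathbb{CP}^2$ meets the conic in exactly two transverse points. After a symplectic isotopy matching these two intersection points with those of $(E_1\cup E_2)\cap V_Z$ under the gluing diffeomorphism $\varphi$, the fiber sum identifies the relevant boundary circles and concatenates $E_1\cup E_2$ with $\ell$ into an embedded sphere $S\subset M$. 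A Mayer--Vietoris self-intersection count yields $S\cdot S=E_1^2+E_2^2+\ell^2=-1-1+1=-1$, so $S$ is exceptional and $M$ is non-minimal.

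For parts (2) and (3), the task is the converse: show that outside the two obstructions in (1) (together with the $S^2$-bundle situation of (2)), any hypothetical exceptional class in $M$ cannot be realized. Fix an $\omega$-tame almost complex structure $J$ on $M$ for which $V$ is $J$-holomorphic and take a $J$-holomorphic representative of a putative class $E\in\mathcal{E}_\omega$. If $E$ is disjoint from $V$, it lives in $X\setminus V_X$ or $Y\setminus V_Y$ and we are in bullet 1 of (1). Otherwise, stretching the neck of the sum degenerates $E$ into $J$-holomorphic buildings on the two sides whose boundary circles match across $V$; positivity of intersections and adjunction constrain each component to be a sphere meeting $V$ transversely and positively in a single point.

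The main obstacle is then the combinatorial enumeration of how such components can reassemble into a class of square $-1$. The forward direction of (2) is easy: an exceptional class in $Z$ disjoint from $V_Z$ survives into $M$. For the converse of (2), buildings whose components fit a section of an $S^2$-bundle neighborhood correspond precisely to exceptional classes pulled back from $Z$, so $M$ is minimal iff $Z$ is. In any remaining configuration, adjunction forces the $\mathbb{CP}^2$-side piece to be a line meeting a conic in two points, thereby recovering $[V_{\mathbb{CP}^2}]=2[H]$ and yielding two $E_i$ on the $Z$ side as in bullet 2 of (1). Following the casework of Usher and of the author's earlier paper to handle these intersection-theoretic constraints then completes (2) and (3) by elimination.
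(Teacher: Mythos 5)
You should note at the outset that the paper does not prove this theorem: it is imported verbatim from Usher \cite{U} and the author's earlier paper \cite{D}, so there is no internal proof to compare against. Judged on its own terms, your sketch of part (1) is the standard argument and is correct in outline: an exceptional sphere disjoint from $V$ survives the surgery untouched, and in the second bullet the concatenation of $E_1\cup E_2$ with a line $\ell$ produces a sphere $S$ whose class decomposes as $(E_1+E_2,H)$, so that Lemma \ref{formulas} gives $S^2=(-1)+(-1)+1=-1$ and $K_M\cdot S=-2+2-3+2=-1$. What you elide there is the real work of arranging the gluing map $\varphi$ and the isotopies so that the three pieces assemble into an \emph{embedded symplectic} sphere (matching the intersection points and normal data, and keeping the glued surface symplectic); this is nontrivial but is carried out in \cite{G} and \cite{D}, so as a sketch it passes.

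The genuine gap is in parts (2) and (3), which carry all of the content. Your neck-stretching step asserts that positivity of intersections and adjunction force each limit component to be a sphere meeting $V$ transversely and positively in a single point; this is false as stated, since limit components can be multiply covered, can lie entirely in $V$, and can meet $V$ with higher local multiplicity, and controlling exactly these phenomena (together with transversality for the limit building) is where the difficulty of the theorem lives. Similarly, the claim that adjunction "forces the $\mathbb{CP}^2$-side piece to be a line meeting a conic" presupposes McDuff's classification of the admissible pairs $(Y,V_Y)$ and the full enumeration of configurations of square $-1$, which you do not perform. Your closing sentence --- "following the casework of Usher and of the author's earlier paper \ldots completes (2) and (3)" --- defers precisely the statement being proved to the references from which it is quoted, so the converse direction of your argument is circular rather than complete. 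For the record, Usher's actual proof proceeds via Taubes' $SW=Gr$ and the symplectic sum formula for Gromov--Witten invariants of Ionel--Parker and Li--Ruan rather than a direct degeneration of a single exceptional curve; either route requires the combinatorial elimination that your proposal skips.
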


\subsection{Symplectic Kodaira Dimension}

A class $K_M\in H^2(M,\mathbb Z)$ is called a symplectic canonical class if there exists a symplectic form $\omega$ on $M$ such that for any almost complex structure $J$ tamed by $\omega$,
\[
K_M=K_\omega=-c_1(M,J).
\]
We will often suppress the dependence on the symplectic form $\omega$ as our calculations will be unaffected by the precise choice of $\omega$.

The
symplectic Kodaira dimension of a 2 - manifold is defined as follows:

\begin{definition}\label{2dim}\[
\kappa(M,\omega)=\left\{\begin{array}{cc}
-\infty & \hbox{if $K_{\omega}<0$},\\
0& \hbox{ if $K_{\omega}=0$},\\
1& \hbox{ if $K_{\omega}> 0$}.
\end{array}\right.
\]
\end{definition}

A similar definition can be made for 4-manifolds:  The symplectic Kodaira dimension $\kappa^s(M,\omega)$ is defined by Li \cite{L1} (see also \cite{LB1}, \cite{MS}) to be:

{\definition \label{symp Kod}
For a minimal symplectic $4-$manifold $M$ with symplectic form $\omega$
and
symplectic canonical class $K_{\omega}$ the symplectic Kodaira dimension $\kappa^s(M,\omega)$
 is defined in the following way:

\[
\kappa^s(M,\omega)=\left\{\begin{array}{cc}
-\infty & \hbox{if $K_{\omega}\cdot \omega<0$ or $K_{\omega}\cdot K_{\omega}<0$},\\
0& \hbox{ if $K_{\omega}\cdot \omega=0$ and $K_{\omega}\cdot K_{\omega}=0$},\\
1& \hbox{ if $K_{\omega}\cdot \omega> 0$ and $K_{\omega}\cdot K_{\omega}=0$},\\
2& \hbox{ if $K_{\omega}\cdot \omega>0$ and $K_{\omega}\cdot K_{\omega}>0$}.\\
\end{array}\right.
\]

The symplectic Kodaira dimension of a non-minimal manifold is defined to be
that of any of its minimal models.
}

 If the symplectic manifold carries a complex structure $J$ in addition to the symplectic structure, then we can define two classes:  The first Chern class $c_1(\mathcal K_J)$ of the canonical bundle $\mathcal K_J$ and the symplectic canonical class $K_X$. Please note that $c_1(\mathcal K_J)$ and $K_{\omega}$ may differ: If the manifold $M$ admits a K\"ahler structure and is minimal, then the first Chern class of the canonical bundle $\mathcal K_J$ is given by the canonical class $K_X$ of the K\"ahler form and it is unique up to diffeomorphism (see \cite{W} and \cite{FM}). If $J$ and $\omega$ are not compatible, then $c_1(\mathcal K_J)$ and $K_X$  are not necessarily equal. On a non-K\"ahler symplectic manifold there may be many symplectic canonical classes $K_X$ depending on the choice of symplectic structure $\omega$.  Hence the symplectic Kodaira dimension may depend on the choice of symplectic structure $\omega$. 
 
 Moreover, Kodaira dimension was first introduced for complex manifolds.  Holomorphic Kodaira dimension makes use of the canonical bundle $\mathcal K_J$ as well and, if the underlying smooth manifold is complex and symplectic but not necessarily K\"ahler, we potentially have two differing Kodaira dimensions for this manifold.
 
The following Theorem addresses these issues:

\begin{theorem}\label{2.4}(Thm 2.4, \cite{L1}; \cite{DZ}; \cite{liu}) Let $M$ be a closed oriented smooth four manifold and $\omega$ an orientation compatible symplectic form on $M$.  Let $(M,\omega)$ be minimal. 
\begin{enumerate}
\item The symplectic Kodaira dimension is well defined.
\item $\kappa(M,\omega)$ only depends on the oriented diffeomorphism type of $M$, hence $\kappa^s(M,\omega)=\kappa^s(M)$.
\item If $M$ admits a complex structure $J$, then the symplectic Kodaira dimension and the holomorphic Kodaira dimension agree.
\item $\kappa(M)=-\infty$ if and only if $M$ is rational or ruled.
\end{enumerate}
\end{theorem}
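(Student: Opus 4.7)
The plan is to establish each of the four assertions by reducing them to Seiberg--Witten theoretic facts about minimal symplectic 4-manifolds. The starting point is Taubes' identification of $\pm K_\omega$ with a Seiberg--Witten basic class on a minimal symplectic $(M,\omega)$, which immediately gives that $K_\omega\cdot K_\omega = 2\chi(M)+3\sigma(M)$ is a topological, hence oriented-diffeomorphism, invariant. The substantive content of (1) and (2) is therefore the sign of the pairing $K_\omega\cdot \omega$ and the sign of $K_\omega^2$, and whether these signs can depend on the choice of symplectic form or of minimal model.

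For (1) and (2) I would split by $b^+(M)$. When $b^+>1$ the set of SW basic classes is a diffeomorphism invariant; since $\pm K_\omega$ lies in this set for every symplectic $\omega$, and since the symplectic cone is connected in each component of the positive cone, the sign of $K_\omega\cdot \omega$ is pinned down by light-cone considerations once $K_\omega^2\ge 0$. When $b^+=1$ wall-crossing makes this more delicate, and here I would invoke the results of Li--Liu showing that $K_\omega\cdot\omega<0$ or $K_\omega^2<0$ on a minimal symplectic 4-manifold forces $(M,\omega)$ to be rational or ruled, a property determined by diffeomorphism type alone. Uniqueness of the minimal model in the non-rational/ruled case is Lemma \ref{inters}(1), while for rational/ruled manifolds every minimal model has $\kappa=-\infty$, so the dimension is unambiguously defined in either regime.

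For part (3), on a minimal K\"ahler surface the canonical class of the K\"ahler form coincides with $c_1(\mathcal K_J)$, and both the holomorphic and symplectic Kodaira dimensions are then controlled by the same numerical data $c_1(\mathcal K_J)^2$ and $c_1(\mathcal K_J)\cdot [\omega]$. For non-K\"ahler complex surfaces that nonetheless admit symplectic structures, I would appeal to the Enriques--Kodaira classification: the list of non-K\"ahler surfaces carrying symplectic forms is short (primary/secondary Kodaira surfaces and certain non-K\"ahler elliptic surfaces), and case-by-case one checks that the holomorphic invariants match the symplectic ones.

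For (4), one direction is straightforward: on $\mathbb CP^2$ and on $S^2$-bundles over Riemann surfaces, standard symplectic forms satisfy $K_\omega\cdot\omega<0$, and this is preserved on the minimal model after blow-ups. The converse---that $\kappa(M)=-\infty$ forces $M$ to be rational or ruled---is the theorem of Liu (\cite{liu}): from $K_\omega\cdot \omega<0$ or $K_\omega^2<0$ one extracts via Taubes' $SW\Rightarrow Gr$ an embedded symplectic sphere of nonnegative self-intersection, and a ruling is then constructed. I expect the main obstacle in writing this program out carefully to be the $b^+=1$ regime in items (1) and (2): wall-crossing makes $K_\omega\cdot \omega$ genuinely sensitive to the choice of chamber, and the argument really does require Liu's classification of negative pairings rather than any formal manipulation of SW invariants.
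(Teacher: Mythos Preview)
The paper does not actually prove Theorem~\ref{2.4}: it is stated with attribution to \cite{L1}, \cite{DZ}, and \cite{liu} and then simply used. So there is no in-paper argument to compare your proposal against.

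That said, your outline is a faithful summary of how the cited references establish the result. Parts (1), (2), and (4) are handled in \cite{L1} and \cite{liu} essentially along the lines you describe: Taubes' constraints make $K_\omega^2=2\chi+3\sigma$ a smooth invariant, Liu's theorem (\cite{liu}) pins down the rational/ruled case as exactly the $\kappa=-\infty$ locus, and the remaining sign analysis of $K_\omega\cdot\omega$ uses the Li--Liu wall-crossing/light-cone arguments for $b^+=1$. Part (3) is the content of \cite{DZ}, where the non-K\"ahler complex case is indeed dealt with via the Enriques--Kodaira classification as you indicate. Your identification of the $b^+=1$ chamber issue as the delicate point is accurate; that is precisely where \cite{liu} does the work.
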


Due to this Theorem, we refer only to Kodaira dimension in the following, dropping the superscript $\kappa^s(M)=\kappa(M)$ as well.

\subsection{Splitting of classes under fiber sums}  Kodaira dimension is defined using $K^2_X$ and $K\cdot\omega$.  It is thus useful, to know how products behave under the symplectic sum.

Let $A\in H_2(M,\mathbb Z)$ be represented by a connected surface $C\subset M$ of genus $g$.  The surface $C$ decomposes under a symplectic cut into surfaces $C_X\subset X$ and $C_Y\subset Y$, each surface not necessarily connected.  We shall denote the class of $C_X$ by $A_X$ and similarly for $C_Y$.    

To begin, we have the following:

\begin{lemma}\label{formulas}Given a fiber sum $M=X\#_VY$ of two symplectic pairs $(X,V_X)$ and $(Y,V_Y)$ and a class $A\in H_2(M,\mathbb Z)$ decomposing into $(A_X,A_Y)\in H_2(X,\mathbb Z)\times H_2(Y,\mathbb Z) $ we have
\begin{equation}
K_M\cdot A=K_X\cdot A_X+A_X \cdot [V_X]+K_Y\cdot A_Y+A_Y\cdot [V_Y]=
\end{equation}
and
\begin{equation}
A^2=A_X^2+A_Y^2
\end{equation}
where $A^2=A\cdot A$.

\end{lemma}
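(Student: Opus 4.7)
The plan is to represent $A$ by a closed oriented surface, decompose it along the neck of the symplectic sum, and reconstruct each piece as a closed surface in $X$ or $Y$. Pick $C\subset M$ closed oriented with $[C]=A$, transverse to the hypersurface produced by the gluing; set $C_X^\circ=C\cap(X\setminus\nu V_X)$ and $C_Y^\circ=C\cap(Y\setminus\nu V_Y)$. After an isotopy, the boundary circles of $C_X^\circ$ may be assumed to be fibers of the circle bundle $\partial\nu V_X\to V_X$, and capping each by the corresponding fiber disk in $\nu V_X$ produces a closed surface $C_X\subset X$ with $[C_X]=A_X$; likewise one recovers $C_Y$ on the $Y$-side. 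Each capping disk transversely meets $V_X$ (resp.\ $V_Y$) in one point, so the number $n$ of disks equals $A_X\cdot[V_X]=A_Y\cdot[V_Y]$, the two signs agreeing by the orientation-reversing convention $e(N_1V)+e(N_2V)=0$ of the symplectic sum.

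For the self-intersection formula, I take a generic pushoff $C'$ of $C$ in $M$ and arrange that the boundary circles of $(C')_X^\circ$ sit over points of $V_X$ distinct from those of $C_X^\circ$. Then the capping disks for $A_X$ and $A_X'$ lie in disjoint normal fibers and contribute no intersections, so $A_X\cdot A_X$ equals the number of intersection points of $C$ and $C'$ lying in $X\setminus\nu V_X$. Applying the same argument on the $Y$-side, and noting that every intersection of $C$ and $C'$ can be pushed into one of the two pieces (their boundaries on the neck being disjoint), yields $A^2=A_X^2+A_Y^2$.

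For the canonical class formula I exploit that $TM$ and $TX$ are literally the same smooth bundle when restricted to the common open 4-manifold $X\setminus\nu V_X$, and similarly on the $Y$-side. Choosing Chern-Weil representatives of $c_1(T^*M)$, $c_1(T^*X)$, and $c_1(T^*Y)$ that agree on these overlaps, I split $K_M\cdot A$ as a sum of two integrals, one over $C_X^\circ$ and one over $C_Y^\circ$, and compare with $K_X\cdot A_X$ and $K_Y\cdot A_Y$; the discrepancies are precisely the integrals of $c_1(T^*X)$ over $D_X$ and of $c_1(T^*Y)$ over $D_Y$, where $D_X,D_Y$ are the families of capping disks. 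Using the splitting $TX|_{\nu V_X}\cong\pi^*(TV_X\oplus NV_X)$ together with a Thom-form representative of the normal summand, each fiber disk through $V_X$ contributes $+1$ per transverse intersection with $V_X$, so the total discrepancy on the $X$-side is $A_X\cdot[V_X]$, and analogously on the $Y$-side.

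The main obstacle is precisely this last local Chern-Weil computation: a compatible choice of connection data on $X$, $Y$, and $M$ is required so that each capping disk contributes exactly its geometric intersection number with $V_X$. Everything else reduces to the routine geometric decomposition of $C$ into $C_X^\circ$ and $C_Y^\circ$ and careful bookkeeping of the boundary components of the pieces.
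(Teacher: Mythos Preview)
Your proposal is correct, but note that the paper does not actually prove this lemma: it simply cites Lemma 2.3 of Ionel--Parker for the $K_M\cdot A$ formula and unpublished work of Li (with details deferred to \cite{DL3}) for the $A^2$ formula. So your direct geometric argument is genuinely more informative than what the paper provides.

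Your self-intersection argument is the standard one and is fine. For the canonical-class formula, your Chern--Weil approach is sound and your identification of the ``main obstacle'' is exactly right: you must choose the representatives on $X$ and $Y$ so that they patch across the neck. Your Thom-form idea does this, because a Thom form for $NV_X$ can be taken with support strictly inside $\nu V_X$, so on $\partial\nu V_X$ the curvature form reduces to $\pi^*c_1(TV)$, and the same happens on the $Y$-side; hence the two forms glue to a genuine representative of $c_1(TM)$. This point deserves to be stated explicitly rather than folded into the word ``compatible''. One caution: you cannot first fix an arbitrary connection on $TM$, extend it into $\nu V_X$, and \emph{then} invoke the Thom-form property---the integral over $D_X$ depends on the extension. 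You must build the connection starting from the Thom-form representatives near $V_X$ and $V_Y$ and then extend outward.

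A shorter route, once you have $A^2=A_X^2+A_Y^2$, is to bypass the Chern--Weil computation entirely via adjunction. Cutting and capping gives $\chi(C)=\chi(C_X)+\chi(C_Y)-2n$ with $n=A_X\cdot[V_X]=A_Y\cdot[V_Y]$, and combining the three adjunction identities $K\cdot[\Sigma]+[\Sigma]^2=-\chi(\Sigma)$ for $C,C_X,C_Y$ immediately yields $K_M\cdot A = K_X\cdot A_X + K_Y\cdot A_Y + 2n$, which is the desired formula. This avoids the connection-compatibility issue altogether.
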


\begin{proof}
The first is  Lemma 2.3, \cite{IP5}.  The second is in \cite{Liun}, a proof can be found in \cite{DL3}.
\end{proof}

Note that this Lemma holds only for classes in $H_2(M,\mathbb Z)$.  It will be useful to have a similar formula for symplectic classes.

\begin{lemma}\label{symp}Let $\omega$ be a symplectic class on $M$.  Suppose that $\omega_X$ and $\omega_Y$ are symplectic classes such that symplectic forms representing each sum to a symplectic representative of $\omega$ on $M$.  Then
\[
K_M\cdot \omega=K_X\cdot\omega_X+K_Y\cdot\omega_Y+\int_{V_X}\omega_X+\int_{V_Y}\omega_Y.
\]
\end{lemma}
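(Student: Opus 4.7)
The plan is to deduce Lemma~\ref{symp} from Lemma~\ref{formulas} via Poincar\'e duality, converting the cohomological pairing of a symplectic class with $K_M$ into the homological pairing already computed for its Poincar\'e dual.

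First, I would extend Lemma~\ref{formulas} to real coefficients: the formula is $\mathbb{Z}$-linear in $A$, and both the cut decomposition $A\mapsto(A_X,A_Y)$ and the intersection pairings involved extend to $H_*(\cdot,\mathbb{R})$ by linearity, so the identity in Lemma~\ref{formulas} holds for any $A\in H_2(M,\mathbb{R})$ with a specified real splitting under the symplectic cut.

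Second, I would apply this extended identity to $A:=\mathrm{PD}_M(\omega)\in H_2(M,\mathbb{R})$. The main technical point is identifying the resulting splitting as $A_X=\mathrm{PD}_X(\omega_X)$ and $A_Y=\mathrm{PD}_Y(\omega_Y)$. By the Gompf--McCarthy--Wolfson construction, the hypothesis lets us choose de Rham representatives so that $\omega$ restricts to $\omega_X$ on $X\setminus\nu(V_X)$ and to $\omega_Y$ on $Y\setminus\nu(V_Y)$. A Mayer--Vietoris argument on $M$, dualized via Poincar\'e--Lefschetz duality on the pieces $X\setminus\nu(V_X)$ and $Y\setminus\nu(V_Y)$, then shows that $\mathrm{PD}_M(\omega)$ is represented by a cycle transverse to $V$ whose pieces represent $\mathrm{PD}_X(\omega_X)$ and $\mathrm{PD}_Y(\omega_Y)$ respectively.

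Third, I would translate each term in the extended version of Lemma~\ref{formulas} back via Poincar\'e duality: $K_M\cdot A=K_M\cdot\omega$, $K_X\cdot A_X=K_X\cdot\omega_X$ and $K_Y\cdot A_Y=K_Y\cdot\omega_Y$, while $A_X\cdot[V_X]=\langle\omega_X,[V_X]\rangle=\int_{V_X}\omega_X$ and analogously for $Y$. Summing these four contributions reproduces the claimed identity.

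The main obstacle is the second step: the splitting of $\mathrm{PD}_M(\omega)$ is not a formal consequence of Lemma~\ref{formulas}, since that lemma takes the decomposition $(A_X,A_Y)$ as part of the hypothesis and is stated for integer classes representable by embedded surfaces. Here the splitting must instead be extracted from the de Rham-level compatibility of $\omega$ with $\omega_X$ and $\omega_Y$, and one must be careful that the Mayer--Vietoris identification of $H^2(M,\mathbb{R})$ with a suitable subspace of $H^2(X\setminus\nu(V_X),\mathbb{R})\oplus H^2(Y\setminus\nu(V_Y),\mathbb{R})$ dualizes correctly to the homological cut. Once this identification is in place, every remaining step is a routine Poincar\'e duality calculation.
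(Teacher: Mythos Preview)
Your approach is correct, but it is organized differently from what the paper points to. The paper does not give an independent argument: it simply cites the constructions in Section~2 of Ionel--Parker and the proof of Theorem~3.1 in Usher's paper. There the formula is obtained directly at the de~Rham level: the symplectic sum construction produces $\omega$ from $\omega_X$ and $\omega_Y$ explicitly, and one knows how $c_1(TM)$ (hence $K_M$) restricts to each piece---on the $X$ side it is $K_X+\mathrm{PD}[V_X]$, and symmetrically on the $Y$ side---so integrating against $\omega$ over the two pieces yields the four terms immediately. In other words, the cited argument splits $K_M$, not $\omega$.

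Your route is the Poincar\'e-dual one: you split $\mathrm{PD}_M(\omega)$ instead and feed it into the homological formula of Lemma~\ref{formulas}. This is legitimate, and you have correctly isolated the only nontrivial step, namely showing that the de~Rham compatibility of $\omega$ with $\omega_X,\omega_Y$ forces the cut of $\mathrm{PD}_M(\omega)$ to be $(\mathrm{PD}_X(\omega_X),\mathrm{PD}_Y(\omega_Y))$. The Mayer--Vietoris/Poincar\'e--Lefschetz argument you sketch does handle this, though note it is doing essentially the same bookkeeping as the direct $c_1$ computation, just on the dual side. The advantage of the paper's route is that the splitting of $K_M$ is already implicit in Lemma~\ref{formulas} (that formula \emph{is} the statement that $K_M$ restricts to $K_X+\mathrm{PD}[V_X]$ on each side), so no separate duality argument is needed; the advantage of yours is that it makes Lemma~\ref{symp} a formal corollary of Lemma~\ref{formulas} once the real-coefficient extension and the splitting identification are in place.
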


\begin{proof}
This result follows from the constructions in Section 2, \cite{IP5}; see the proof of Thm. 3.1, \cite{U2}, for details.
\end{proof}

It will be convenient to have results for the behavior of $K^2$ and $K\cdot\omega$ under blow-ups, these results are surely known, see \cite{G}:

\begin{lemma}\label{bd}
Let $Y=X\#_{E=H}\mathbb CP^2$ be the blow-down of the exceptional curve $E$ in $X$.  Let $\omega_Y$ denote the symplectic form produced in the fiber sum from $\omega_X$ and $\omega_{\mathbb CP^2}$.  Then
\[
K_Y\cdot\omega_Y=K_X\cdot\omega_X-\omega_X\cdot E
\]
and
\[
K_Y^2=K_X^2+1.
\]

\end{lemma}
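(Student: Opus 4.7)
The plan is to apply Lemmas~\ref{formulas} and~\ref{symp} to the fiber-sum description $Y=X\#_{E=H}\mathbb{CP}^2$ of the blow-down, with $V_X=E$ (self-intersection $-1$ in $X$) and $V_{\mathbb{CP}^2}=H$ (self-intersection $+1$ in $\mathbb{CP}^2$). The Euler-class condition $E^2+H^2=0$ is satisfied, and for the symplectic sum to be defined the areas must match, so $\omega_X\cdot E=\omega_{\mathbb{CP}^2}\cdot H=:a$.

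For the first identity I would substitute directly into Lemma~\ref{symp}. The key numerical input is $K_{\mathbb{CP}^2}=-3H$ (from adjunction on the line $H$), which gives $K_{\mathbb{CP}^2}\cdot\omega_{\mathbb{CP}^2}=-3a$. The two boundary-integral terms each contribute $a$, so the $\mathbb{CP}^2$-side total collapses to $-3a+a+a=-a$, yielding
\[
K_Y\cdot\omega_Y=K_X\cdot\omega_X+(-3a)+a+a=K_X\cdot\omega_X-\omega_X\cdot E.
\]

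For the second identity my preferred route is to bypass the sum formula and invoke the classical blow-up relation $K_X=\pi^{*}K_Y+E$ for the blow-down map $\pi\colon X\to Y$; squaring and using $\pi^{*}K_Y\cdot E=K_Y\cdot\pi_{*}E=0$ together with $E^{2}=-1$ gives $K_X^2=K_Y^2-1$, i.e.\ $K_Y^2=K_X^2+1$. If one insists on a sum-formula proof instead, I would apply Lemma~\ref{formulas} to $A=K_Y$ with the decomposition $(K_Y)_X=K_X-E$ and $(K_Y)_{\mathbb{CP}^2}=0$ (the natural choice coming from the blow-up formula for the canonical class), expand $(K_X-E)^2$, and use $K_X\cdot E=-1$ from adjunction on $E$ to reach the same conclusion.

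The only genuine point requiring care is bookkeeping in the first formula: both boundary-integral terms in Lemma~\ref{symp} feed in the \emph{same} area $a$, so the net correction is $-3a+2a=-a$, and dropping or double-counting one of them would spoil the coefficient. The decomposition $(K_Y)_X=K_X-E$ in the sum-formula route for $K_Y^2$ is the subtlest ingredient, but it is a standard consequence of the blow-up formula for canonical classes and is also what makes the route via $K_X=\pi^{*}K_Y+E$ so much cleaner.
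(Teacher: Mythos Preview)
Your proof is correct and, for the first identity, follows exactly the route the paper indicates: substitute into Lemma~\ref{symp} with $V_X=E$, $V_{\mathbb{CP}^2}=H$, $K_{\mathbb{CP}^2}=-3H$, and the matching areas $\omega_X\cdot E=\omega_{\mathbb{CP}^2}\cdot H$.

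For the second identity the paper simply says to calculate directly from Lemma~\ref{formulas}; the analogous computation in Lemma~\ref{rbd} shows that the intended decomposition of $K_M$ in a sum $M=X\#_V Y$ is $(K_X+[V_X],\,K_Y+[V_Y])$, so here one takes $K_Y^2=(K_X+E)^2+(-3H+H)^2=(K_X^2-3)+4=K_X^2+1$. Your alternative sum-formula decomposition $(K_X-E,\,0)$ is equally valid (both sides have intersection $0$ with the hypersurface, and $(K_X-E)^2=K_X^2+1$ directly), and is indeed the natural choice from the blow-up viewpoint since $\pi^*K_Y=K_X-E$. Your preferred route via $K_X=\pi^*K_Y+E$ bypasses the fiber-sum machinery altogether and is the cleanest of the three; it buys you the result without having to pick a decomposition or track intersection numbers with $V$, at the cost of invoking a fact external to the lemmas the paper is showcasing.
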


\begin{proof}
Both results can be obtained by direct calculation from Lemmas \ref{formulas} and \ref{symp}.
\end{proof}

\begin{cor}\label{mbd}
Let $Y$ be the successive blow-down of a finite set of $n$ exceptional curves $\{E_i\}$ in $X$.  Assume that $X$ is not rational or ruled.  Let $\omega_Y$ be the symplectic form produced from $\omega_X$ under the blow down.  Then
\[
K_Y\cdot\omega_Y=K_X\cdot\omega_X-\sum_{i=1}^n\omega_X\cdot E_i
\]
and 
\[
K_Y^2=K_X^2+n.
\]
\end{cor}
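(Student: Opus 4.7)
The plan is to prove the corollary by induction on $n$, using Lemma \ref{bd} as the base case and inductive step. The role of the hypothesis that $X$ is not rational or ruled is to invoke Lemma \ref{inters}(2), which guarantees that distinct symplectic exceptional spheres are disjoint; this is what ensures the individual symplectic areas $\omega_X \cdot E_i$ appearing in the statement correspond correctly to the local quantities that Lemma \ref{bd} produces at each stage of the blow-down.

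More precisely, the base case $n=1$ is exactly Lemma \ref{bd}. For the inductive step, suppose the result has been established after blowing down $E_1,\dots,E_{n-1}$, producing an intermediate pair $(Y', \omega_{Y'})$ with
\[
K_{Y'}\cdot\omega_{Y'}=K_X\cdot\omega_X-\sum_{i=1}^{n-1}\omega_X\cdot E_i, \qquad K_{Y'}^2=K_X^2+(n-1).
\]
Since $X$ is not rational or ruled, Lemma \ref{inters}(2) tells us that the symplectic sphere representing $E_n$ is disjoint from each of the spheres representing $E_1,\dots,E_{n-1}$. Hence this sphere survives intact in $Y'$ as an embedded symplectic exceptional sphere whose class, which I will still denote by $E_n$, satisfies $\omega_{Y'}\cdot E_n = \omega_X\cdot E_n$ (the restriction of $\omega_X$ to a neighborhood disjoint from the blown-down loci is preserved by the construction). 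We may therefore blow down $E_n$ in $Y'$, and applying Lemma \ref{bd} to the single blow-down $Y'\to Y$ gives
\[
K_Y\cdot\omega_Y=K_{Y'}\cdot\omega_{Y'}-\omega_{Y'}\cdot E_n, \qquad K_Y^2=K_{Y'}^2+1.
\]
Substituting the inductive hypothesis and $\omega_{Y'}\cdot E_n=\omega_X\cdot E_n$ yields the two desired identities.

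The only step that is not entirely formal is verifying that the class $E_n$ descends to $Y'$ with its symplectic area unchanged, and this is precisely where the non-rational/non-ruled hypothesis intervenes via Lemma \ref{inters}(2). Once disjointness of the exceptional spheres is in hand, the successive blow-downs are independent, and both formulas reduce to a telescoping sum of the $n=1$ case. I expect no other obstacle; the main conceptual point is to be careful that the symplectic areas in the final formula can legitimately be read off of the original form $\omega_X$ rather than the intermediate forms.
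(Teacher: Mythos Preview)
Your proof is correct and follows essentially the same approach as the paper: both invoke Lemma \ref{inters}(2) from the non-rational/non-ruled hypothesis to ensure the exceptional spheres are disjoint, observe that the symplectic area of each remaining $E_j$ is unchanged because the blow-down form agrees with the original away from the gluing locus, and then apply Lemma \ref{bd} repeatedly. Your version is simply a more explicit inductive formulation of the same argument.
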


\begin{proof}
The assumption on $X$ ensures that no two exceptional curves intersect.  Hence the blow-down of $E_i$ does not change $E_j$ for any $i\ne j$.  Moreover, the symplectic form obtained on the blow-down agrees with the original symplectic form away from the gluing locus, hence the symplectic area of $E_j$ is also unchanged.  The result now follows from repeatedly applying Lemma \ref{bd}.
\end{proof}

This result also holds in the case $X$ is rational or ruled if we assume that the set of exceptional divisors $\{E_i\}$ consists of pairwise disjoint curves.

\section{Kodaira dimension\label{kodsection}}

This section is devoted to the Kodaira dimension of symplectic sums.  The change in symplectic Kodaira dimension of a symplectic manifold under the fiber sum operation along surfaces of positive genus has been studied in \cite{LY} and \cite{U2}.  It was shown that if $M=X\#_VY$, then $\kappa(M)\ge \max\{\kappa(X),\kappa(Y),\kappa(V)\}.$  In this section we would like to show, that the same holds for fiber sums along surfaces of genus 0:  

\begin{theorem}\label{main1}Let $M=X\#_VY$ be a symplectic fiber sum along a symplectic hypersurface $V$ of genus 0.  Then the symplectic Kodaira dimension is non-decreasing, i.e.
\[
\kappa(M)\ge \max\{\kappa(X),\kappa(Y),\kappa(V)\}.
\]

\end{theorem}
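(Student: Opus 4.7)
First, since $V\cong S^2$ has $K_V<0$, Definition~\ref{2dim} gives $\kappa(V)=-\infty$, so the bound involving $\kappa(V)$ is automatic and we need only prove $\kappa(M)\ge\max\{\kappa(X),\kappa(Y)\}$. If $\kappa(X)=\kappa(Y)=-\infty$ the claim is vacuous, so by the symmetric roles of $X$ and $Y$ I assume $\kappa(X)\ge 0$ and reduce to showing $\kappa(M)\ge\kappa(X)$. Invoking the relative-minimality result cited just before Theorem~\ref{minimal}, I blow down every exceptional sphere disjoint from $V$ in both $X$ and $Y$; these curves persist through the gluing, so the same blowdowns carry $M$ to its simultaneous reduction and leave all three Kodaira dimensions intact. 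We may therefore assume that $(X,V_X)$ and $(Y,V_Y)$ are relatively minimal and contain no exceptional spheres disjoint from $V$ at all, which places us under the hypotheses of the intermediate reduction stated in the introduction (based on Thm.~1.4 of \cite{M}).

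Next, the plan is to split according to $[V_X]^2$. Since $\kappa(X)\ge 0$, a theorem of McDuff rules out symplectic spheres of non-negative self-intersection in $X$, so $[V_X]^2\le -1$. When $[V_X]^2\ne -4$, Theorem~\ref{minimal} forces the fiber sum $M$ to be minimal: the \emph{$+4$-sphere plus two sections} exception is excluded because $[V_Y]^2=-[V_X]^2\ne 4$, and the ruled-section exception is excluded by $\kappa(X)\ge 0$. I would then apply Lemma~\ref{formulas} to the decomposition of $K_M$ corresponding to $(K_X+[V_X],\,K_Y+[V_Y])$, Lemma~\ref{symp} to compute $K_M\cdot\omega_M$, and the adjunction identity $K_X\cdot[V_X]=-2-[V_X]^2$ to put $K_M^2$ and $K_M\cdot\omega_M$ into explicit form. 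A direct check against Definition~\ref{symp Kod} in each of the possibilities $\kappa(X)\in\{0,1,2\}$ then yields $\kappa(M)\ge\kappa(X)$; this is the \emph{almost immediate} case indicated in the introduction.

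The remaining case $[V_X]^2=-4$, in which $M$ is the symplectic $-4$-blowdown of $V_X$, is the heart of the argument because $M$ may genuinely fail to be minimal. My plan is to subdivide once more. If $X$ (or symmetrically $Y$) contains a smoothly embedded exceptional sphere meeting $V_X$ transversely and positively in exactly one point, Gompf's Lemma~\ref{flip} permits a flip of the gluing that transports the exceptional sphere across the neck onto the $\mathbb{CP}^2$-side; the resulting diffeomorphic sum is then handled by a variant of the computation from the generic case. If no such sphere exists on either side, I would show that no symplectic exceptional class in $M$ can meet $V_X$ negatively, so every exceptional class of $M$ descends to one of the summands and is excluded by our reduction, whence $M$ is minimal and the formal computation again closes the case. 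The main obstacle is this last subcase: controlling exceptional classes in $M$ that do not come from either summand. The argument must combine positivity of intersection, the class splittings of Lemma~\ref{formulas}, and the specific value $[V_X]^2=-4$, which via adjunction and relative minimality restricts the candidate new exceptional classes to a short list that can then be eliminated.
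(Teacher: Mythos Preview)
Your reduction to $\kappa(X)\ge 0$ and to relatively minimal pairs matches the paper, but two of the subsequent steps do not go through as written.

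\textbf{The case $[V_X]^2\ne -4$.} Your claim that $M$ is minimal here is incorrect. When $[V_X]^2\le -2$ and $[V_X]^2\ne -4$, the summand $(Y,V_Y)$ is an $S^2$-bundle over $S^2$ with $V_Y$ a section, and Theorem~\ref{minimal}(2) then says $M$ is minimal \emph{iff $X$ is minimal}; relative minimality of $(X,V_X)$ does not force $X$ to be minimal, so your exclusion ``by $\kappa(X)\ge 0$'' does not apply. For a concrete failure: take $X_m$ a K3 surface containing a symplectic $(-2)$-sphere $C$, blow up one point of $C$ to obtain $X=X_m\#\overline{\mathbb{CP}^2}$ and the $(-3)$-sphere $V_X$ in class $[C]-E$. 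Then $(X,V_X)$ is relatively minimal, $K_X^2=-1$, and your formulas give $K_M^2=K_X^2+K_Y^2-8=-1$; applying Definition~\ref{symp Kod} directly would yield $\kappa(M)=-\infty$, whereas in fact $M\cong X$ and $\kappa(M)=0$. The paper avoids any computation here: when $(Y,V_Y)\ne(\mathbb{CP}^2,2H)$ the sum is either the classical blow-down ($[V_X]^2=-1$) or leaves the diffeomorphism type of $X$ unchanged ($V_Y$ a section), so $\kappa(M)=\kappa(X)$ holds by the definition of Kodaira dimension on non-minimal manifolds.

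\textbf{The case $[V_X]^2=-4$ with $n_{sm}=0$.} Here $M$ is indeed minimal, but this follows immediately from Theorem~\ref{minimal} (neither exception in part~(1) applies and $(\mathbb{CP}^2,2H)$ is not a bundle-with-section); no argument about ``exceptional classes in $M$ meeting $V_X$'' is needed, and that phrase does not parse since $V_X$ lives in $X$, not $M$. The genuine obstacle lies elsewhere. Once $M$ is minimal, Lemma~\ref{rbd} gives $K_M^2=K_X^2+1$, so $\kappa(M)\ge 0$ requires $K_X^2\ge -1$, i.e.\ $X=X_m\#k\overline{\mathbb{CP}^2}$ with $k\le 1$. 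But relative minimality together with $n_{sm}=0$ only says each exceptional class $E_i$ has $E_i\cdot[V_X]\notin\{0,1\}$; it does not by itself bound $k$. The paper supplies this bound via Lemma~\ref{k=t} (no symplectic exceptional sphere \emph{in $X$} meets $V_X$ negatively, proved by a $J$-holomorphic degeneration argument against $V_X$) and then Lemma~\ref{n=t} (which forces $k\le 1$ when $n_{sm}=0$). Your outline invokes the negativity statement but locates it in $M$ and aims it at proving minimality rather than at bounding $k$; without that bound the ``formal computation'' cannot close the case.
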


We first restate the Theorem in a simpler fashion.  As we have seen, for the symplectic sum to exist, we must have $[V]^2\ge 0$ in either $X$ or $Y$.  We will assume that this holds in $Y$.  As described in \cite{D}, the symplectic sum along a sphere thus reduces to one of the following four cases for $(Y,V_Y)$ (Thm. 1.4, \cite{M}, \cite{G}):
\begin{itemize}
\item $(Y,V_Y)=(\mathbb CP^2,H)$, 
\item $(Y,V_Y)=(\mathbb CP^2,2H)$,
\item $Y$ an $S^2$-bundle over a genus $g$ surface, $V_Y$ a fiber, or
\item $Y$ an $S^2$-bundle over sphere, $V_Y$ a section.
\end{itemize} 
Hence  $\kappa(Y)=\kappa(V)=-\infty$ and thus the statement of Thm. \ref{main} is equivalent to
\[
\kappa(M)\ge \kappa(X).
\]  
Moreover, if $\kappa(X)=-\infty$ the result is trivially true.  We therefore assume in the following that $\kappa(X)\ge 0$.  Note that this in particular ensures that any non-minimal $X$ has a unique minimal model $X_m$.

Theorem \ref{minimal} shows, if $X$ is not relatively minimal then we obtain an exceptional curve in $M$ from each exceptional curve not meeting $V_X$ in $X$.  Blowing down both sets of curves does not change the Kodaira dimension and the blown down  manifold $X'$ can be summed with $Y$ along $V_X$ to obtain the blown down manifold $M'$.  We may thus assume, that all of our calculations are done on a relatively minimal pair. 

\begin{theorem}\label{main}Let $M=X\#_VY$ be a symplectic fiber sum along a relatively minimal (in $X$ and $Y$) symplectic hypersurface $V$ of genus 0 and $\kappa(X)\ge 0$.  Then 
\[
\kappa(M)\ge \kappa(X).
\]

\end{theorem}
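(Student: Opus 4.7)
The plan is to reduce Theorem \ref{main} to the four possibilities for $(Y,V_Y)$ enumerated in the excerpt: $(\mathbb CP^2,H)$, $(\mathbb CP^2,2H)$, an $S^2$-bundle over a genus $g$ surface with $V_Y$ a fiber, or an $S^2$-bundle over a sphere with $V_Y$ a section. Since $\kappa(X)\ge 0$ forbids $X$ from being rational or ruled by Theorem \ref{2.4}(4), and since a symplectic four-manifold admitting a symplectic sphere of non-negative self-intersection is rational or ruled (McDuff), the cases producing $[V_X]^2\ge 0$ are vacuous. This eliminates the $S^2$-fiber case outright (which forces $[V_X]^2=0$) and restricts the section case to $V_Y$ being the positive section of some Hirzebruch surface $F_n$ with $n\ge 1$, giving $[V_X]^2=-n$.

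First I would dispose of the easy configurations. When $(Y,V_Y)=(\mathbb CP^2,H)$, $V_X$ is a $-1$-sphere and $M$ is by construction the blow-down of $V_X$ in $X$, so $\kappa(M)=\kappa(X)$ by invariance of the minimal model. When $Y=F_n$ with $V_Y$ the positive section, Theorem \ref{minimal}(2) reduces minimality of $M$ to minimality of $X$; combining this with the computation $K_M^2=K_X^2+K_{F_n}^2-8=K_X^2$ and the analogous expression for $K_M\cdot\omega_M$ coming from Lemmas \ref{formulas} and \ref{symp}, one verifies that $M$ and $X$ share Kodaira invariants, giving $\kappa(M)=\kappa(X)$. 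In the subcase $n=1$, one can alternatively blow down the $-1$-section of $F_1$ (which is disjoint from $V_Y$) and appeal directly to the $(\mathbb CP^2,H)$ analysis together with Corollary \ref{mbd}.

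The remaining, genuinely hard case is the $-4$-blow-down: $(Y,V_Y)=(\mathbb CP^2,2H)$ and $V_X$ is a symplectic $-4$-sphere. Following the outline in the introduction I would split according to whether $X$ contains a finite collection of pairwise disjoint, smoothly embedded exceptional spheres each meeting $V_X$ transversely in a single positive point. In the presence of such spheres I would apply Gompf's Lemma \ref{flip} to flip them across $V_X$; the modified symplectic structure has the same smooth type, and the flipped spheres produce exceptional spheres in $M$ exhibiting it as a blow-up of a manifold whose Kodaira dimension is controlled via the easier cases. In the absence of such spheres, the crux is first to rule out any symplectic exceptional sphere in $X$ meeting $V_X$ negatively; combined with relative minimality of $(X,V_X)$ this forces $X$ itself to be minimal, after which Lemmas \ref{formulas} and \ref{symp}, the adjunction relation $K_X\cdot[V_X]=2$ for a $-4$-sphere, and the explicit data $K_{\mathbb CP^2}=-3H$, $[V_Y]=2H$, yield explicit expressions for $K_M^2$ and $K_M\cdot\omega_M$. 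Passing to the minimal model of $M$ and comparing with the numerical characterization of $\kappa$ in Definition \ref{symp Kod} then gives $\kappa(M)\ge\kappa(X)$.

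I expect the main obstacle to be this no-positive-transverse-sphere subcase: excluding symplectic exceptional spheres of $X$ that meet $V_X$ negatively, and controlling any new exceptional spheres in $M$ produced by the surgery, is where the argument is delicate. Positivity of intersection of $J$-holomorphic curves, the structure of $\mathcal{E}_\omega$ supplied by Lemma \ref{inters}, and careful bookkeeping of classes under the decomposition in Lemma \ref{formulas} across the fiber sum are the expected tools.
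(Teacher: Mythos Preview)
Your outline tracks the paper's strategy closely through the reduction to the $-4$-blow-down and the split on whether a smooth exceptional sphere meets $V_X$ transversely in a single positive point. The genuine gap is in the $n_{sm}=0$ subcase: you assert that ruling out symplectic exceptional spheres with $E\cdot[V_X]<0$, together with relative minimality, forces $X$ to be minimal. This does not follow. Relative minimality excludes $E\cdot[V_X]=0$, the hypothesis $n_{sm}=0$ excludes $E\cdot[V_X]=1$, and the negative-intersection lemma excludes $E\cdot[V_X]<0$; but nothing you have said rules out symplectic exceptional classes with $E\cdot[V_X]\ge 2$. These genuinely occur: blow up the nodal point of a cuspidal fiber in a minimal $\kappa=0$ surface and you obtain a symplectic $-4$-sphere $V_X$ together with an exceptional class $E$ satisfying $E\cdot[V_X]=2$, so $X=X_m\#\overline{\mathbb{CP}^2}$ is non-minimal with $n_{sm}=0$.

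The paper closes this gap with Lemma~\ref{n=t}, which is the real workhorse and is not anticipated by your sketch: it shows that under $n_{sm}=0$ and $\kappa(X)\ge 0$ one has $X=X_m$ or $X=X_m\#\overline{\mathbb{CP}^2}$, hence $K_X^2\ge -1$. Only with this bound in hand does the formula $K_M^2=K_X^2+1$ from Lemma~\ref{rbd} yield $K_M^2\ge 0$ on the (minimal) $M$. The proof of Lemma~\ref{n=t} is not about analyzing individual exceptional classes; it represents $K_X$ (or, when $b^+=1$, a class $\pi^*(qK_{X_m})+\sum E_i$) by a $J$-holomorphic curve for a $J$ making $V_X$ holomorphic, passes to a limit, and extracts a numerical constraint on the number of exceptional classes via positivity of intersections with $V_X$. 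Your list of expected tools is right, but the target of the argument is the canonical class, not the set $\mathcal E_\omega$ directly.
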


%
%

\subsection{ $\bf [V]^2\ne\pm4$}

We now prove Thm. \ref{main} for hypersurfaces with $[V]^2\ne\pm4$.  From the list above, either $Y$ is an $S^2$-bundle or $\mathbb CP^2$.  In the latter case this is just the classical blow-down of an exceptional sphere in $X$.

\begin{lemma}
If $(Y,V_Y)\ne(\mathbb CP^2,2H)$ then $\kappa(M)=\kappa(X)$.
\end{lemma}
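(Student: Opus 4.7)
The plan is a case analysis on $(Y,V_Y)$ based on the four-case classification listed just before the lemma, combined with direct computation of $K_M^2$ and $K_M\cdot\omega_M$ via Lemmas \ref{formulas} and \ref{symp}.

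For $(Y,V_Y)=(\mathbb CP^2,H)$, the Euler-class matching forces $[V_X]^2=-1$, so $V_X$ is a symplectic exceptional sphere in $X$ and the fiber sum coincides with the symplectic blow-down of $V_X$ described in Section \ref{pre}; since blow-down preserves the minimal model, $\kappa(M)=\kappa(X)$. If instead $V_Y$ is a fiber of an $S^2$-bundle, then $[V_X]^2=0$ makes $V_X$ an embedded symplectic sphere of non-negative self-intersection in $X$; by the very classification yielding the four cases, this would force $X$ to be rational or ruled, contradicting $\kappa(X)\ge 0$, so this case does not arise.

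The remaining case is $Y\cong\mathbb F_n$ with $V_Y$ a section of self-intersection $m\ne 4$. The sub-case $m=1$ is handled by decomposing $\mathbb F_1\cong\mathbb CP^2\#\overline{\mathbb CP^2}$ so that $V_Y$ becomes a line in the $\mathbb CP^2$ summand; then $M$ is diffeomorphic to the blow-down of $V_X$ (an exceptional sphere) in $X$ followed by a symplectic blow-up, so $M$ and $X$ share the minimal model. For $m\ge 2$, assuming first that $X$ is minimal (so that $M$ is too, by Thm \ref{minimal}(2)), the adjunction identities $K_X\cdot V_X=m-2$ and $K_Y\cdot V_Y=-m-2$, together with $K_Y^2=8$ and $V_X^2+V_Y^2=0$, give
\[
K_M^2 = K_X^2 + K_Y^2 + 2(K_X\cdot V_X + K_Y\cdot V_Y) = K_X^2,
\]
while Lemma \ref{symp} yields $K_M\cdot\omega_M = K_X\cdot\omega_X + (m-2)B$ for $B$ the fiber area in $\mathbb F_n$. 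For $\kappa(X)\in\{1,2\}$ the correction $(m-2)B\ge 0$ is compatible and the identities imply $\kappa(M)=\kappa(X)$; for $\kappa(X)=0$, adjunction on $V_X$ combined with the (known or conjectural) torsion of the canonical class on minimal $\kappa=0$ symplectic four-manifolds forces $m=2$, so the correction vanishes and again $\kappa(M)=0=\kappa(X)$.

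The main obstacle I anticipate is the reduction to the minimal case. Relative minimality of $(X,V_X)$ only forbids exceptional curves disjoint from $V_X$, so $X$ may itself be non-minimal with exceptional curves meeting $V_X$, and one cannot pass to $X_m$ without altering the homology class of $V$. Handling this cleanly will require tracking blow-ups on both sides of the fiber sum, combining Lemma \ref{bd} and Corollary \ref{mbd} with Thm \ref{minimal}(2) to verify that blow-downs on $X$ correspond to blow-downs on $M$ in a manner preserving the Kodaira dimension equality.
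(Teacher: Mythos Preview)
Your treatment of the blow-down case and the fiber case is fine and matches the paper's reasoning (the paper phrases the fiber case as ``$M$ is again an $S^2$-bundle'', which amounts to the same observation that both $X$ and $M$ are rational or ruled).

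For the section case, however, you are working much too hard, and this is what produces the gap you flag at the end. The paper's argument is a single sentence: when $Y$ is an $S^2$-bundle over $S^2$ and $V_Y$ is a section, the fiber sum $X\#_V Y$ is \emph{diffeomorphic to $X$}. This is a standard topological fact (Gompf \cite{G}): the complement of a section in an $S^2$-bundle is a disk bundle over $S^2$, i.e.\ exactly a tubular neighborhood of a sphere with the correct normal Euler number, so gluing it onto $X\setminus\nu(V_X)$ simply rebuilds $X$. Since Kodaira dimension is a diffeomorphism invariant (Theorem~\ref{2.4}), $\kappa(M)=\kappa(X)$ follows immediately, with no case split on $m$, no appeal to torsion of $K_X$ in the $\kappa=0$ case, and no need to reduce to minimal $X$.

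Your computational route via $K_M^2$ and $K_M\cdot\omega_M$ does yield the right numbers when $X$ is minimal, but the ``main obstacle'' you anticipate is genuine: relative minimality of $(X,V_X)$ with $V_X^2=-m<0$ still allows exceptional classes $E$ with $E\cdot V_X\ne 0$, so $X$ need not be minimal, and your formulas then compute quantities on a non-minimal $M$ that do not directly read off $\kappa(M)$. One can in principle chase blow-downs on both sides via Theorem~\ref{minimal}(2), but all of this is bypassed by the diffeomorphism $M\cong X$.
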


\begin{proof}
In the blow down case, the Kodaira dimension does not change by definition.  In the case of a $S^2$-bundle, we either do not change the diffeomorphism type ($V_Y$ a section) or obtain again a $S^2$-bundle $(V_Y$ a fiber).  Hence again the Kodaira dimension is unchanged.
\end{proof}


\subsection{A Smooth Result}We may restrict ourselves to the case $(Y,V_Y)=(\mathbb CP^2,2H)$ and assume that $(X,V_X)$ is relatively minimal.  The following Lemma of Gompf describes the flipping of an exceptional curve from one side of the smooth sum to the other and its effect on the diffeomorphism type of the sum:  

\begin{lemma}[Lemma 5.1,\cite{G}]\label{flip}
Let $V_X$ and $V_Y$ be  closed, connected, orientable diffeomorphic surfaces in the oriented $4-$manifolds $X$ and $Y$ such that $[V_X]^2+[V_Y]^2=1$.  Let $\tilde V_{\tilde X}$ denote the blow up of a point on $V_X$ and $\tilde X=X\#\overline{\mathbb CP^2}$.  Similarly for $(\tilde Y,\tilde V_{\tilde Y})$.  Then $\tilde X\#_{\tilde V_{\tilde X}=V_Y}Y$ and $X\#_{V_X=\tilde V_{\tilde Y}}\tilde Y$ are diffeomorphic.

\end{lemma}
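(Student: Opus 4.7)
The plan is to construct an explicit diffeomorphism by displacing the exceptional sphere across the fiber-sum gluing. The guiding idea is that an exceptional sphere meeting the fiber-sum surface transversely in one point can be geometrically pushed through the gluing neck so that it resides on the opposite side; since the blow-up is a local operation and the fiber-sum gluing region is symmetric between the two sides, performing the blow-up on either side of the gluing ought to yield the same total manifold.

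First I would verify that both fiber sums are well-defined. The hypothesis $[V_X]^2 + [V_Y]^2 = 1$ gives $[\tilde V_{\tilde X}]^2 = [V_X]^2 - 1 = -[V_Y]^2$, so the normal Euler classes cancel on the $\tilde X \#_{\tilde V_{\tilde X} = V_Y} Y$ side, and symmetrically $[\tilde V_{\tilde Y}]^2 = -[V_X]^2$ on the other side. Next I would locate the exceptional sphere $E \subset \tilde X$ relative to the fiber sum: since $E$ meets $\tilde V_{\tilde X}$ transversely in a single point $q$, the tubular neighborhood $\nu(\tilde V_{\tilde X})$ used in the fiber sum can be chosen so that $E \cap \nu(\tilde V_{\tilde X})$ is a standard fiber disk over $q$. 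Consequently $D_E := E \setminus \nu(\tilde V_{\tilde X})$ is an embedded disk in $\tilde X \setminus \nu(\tilde V_{\tilde X})$ whose boundary is a fiber circle of the gluing circle bundle $\partial \nu(\tilde V_{\tilde X}) \cong \partial \nu(V_Y)$.

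The main step is to realize this configuration symmetrically on the $Y$ side. I would pass to a local plumbing model for a neighborhood of $\tilde V_{\tilde X} \cup E$ together with its fiber-sum gluing to $\nu(V_Y)$, and construct an explicit diffeomorphism of this model that interchanges the roles of the two sides: the thickened exceptional disk $\nu(D_E)$ on the $\tilde X$-side is pushed through the gluing so that it appears instead as the thickened exceptional disk on a blow-up of the $Y$-side, while the fiber-orientation-reversing identification $\Theta$ between the two boundary circle bundles is adjusted accordingly. Away from this local model both manifolds agree with $X \setminus \nu(V_X)$ and $Y \setminus \nu(V_Y)$ respectively, so extending the local diffeomorphism by the identity on these complements yields the desired global diffeomorphism $\tilde X \#_{\tilde V_{\tilde X} = V_Y} Y \cong X \#_{V_X = \tilde V_{\tilde Y}} \tilde Y$.

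The main obstacle is the explicit construction of the sliding diffeomorphism in the local model: it requires carefully tracking how the normal framings of $E$, $\tilde V_{\tilde X}$, and $V_Y$ interact with the fiber-orientation-reversing gluing $\Theta$, and verifying that two plumbing configurations differing only in which side carries the $-1$-disk are connected by a diffeomorphism supported in a tubular neighborhood of a single fiber circle. Once this local statement is established, the global diffeomorphism assembles by gluing with the identity on the untouched complements of $V$ in $X$ and $Y$, completing the proof.
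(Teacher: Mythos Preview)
The paper does not prove this lemma; it is quoted directly from Gompf's paper \cite{G} (Lemma~5.1 there) and used as a black box, so there is no in-paper proof to compare against. Your sketch is a reasonable outline of Gompf's original argument: the blow-up is supported in a ball meeting the fiber-sum surface, and since the fiber-sum construction is local near $V$, one can carry that ball through the gluing region to the other side. That said, as written your proposal remains a plan rather than a proof---the ``main obstacle'' you identify (the explicit local sliding/plumbing diffeomorphism and the framing bookkeeping) is exactly where all the content lies, and you have not supplied it. If you want a self-contained argument you should consult Gompf's proof, where this local model is handled directly; for the purposes of the present paper, citing \cite{G} is all that is done.
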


\begin{remark}
If the symplectic sum is performed in the symplectic category, then results in \cite{MSy} show that $(\tilde X\#_{\tilde V_{\tilde X}=V_Y}Y,\omega_1)$ and $(X\#_{V_X=\tilde V_{\tilde Y}}\tilde Y,\omega_2)$ are weakly deformation equivalent as symplectic manifolds.  This means that there exists a diffeomorphism $\phi:\tilde X\#_{\tilde V_{\tilde X}=V_Y}Y\rightarrow X\#_{V_X=\tilde V_{\tilde Y}}\tilde Y$ such that $\phi^*(\omega_2)$ can be connected to $\omega_1$ by a smooth family of symplectic forms $\Omega_t$, $t\in[0,1]$ such that $\Omega_0=   \phi^*(\omega_2)$ and $\Omega_1=\omega_1$.  Hence in this setting we not only obtain no new smooth structure, we also obtain no exotic symplectic structure.  
\end{remark}

Lemma \ref{flip} is the essential ingredient in the following two results:

\begin{lemma}\label{n=1}[\cite{G}]
Let $(X,V_X)$ be a relatively minimal smooth pair with $V_X$ an embedded $-4$-sphere.  If $X$ contains a smoothly embedded exceptional sphere transversely intersecting the hypersurface $V_X$ in a single positive point, then the manifold obtained under $-4$-blow-down of $V_X$ is diffeomorphic to the blow-down of $X$ along this sphere.
\end{lemma}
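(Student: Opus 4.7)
The strategy is to apply Lemma \ref{flip} to slide the exceptional sphere $E$ from the $X$-side of the sum $X \#_{V_X = 2H}\mathbb CP^2$ over to the $\mathbb CP^2$-side. This converts the $-4$-blow-down into a fiber sum along a section of a Hirzebruch surface, which is diffeomorphically trivial.

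First I would blow down $E$ in $X$ to produce $X'$. Because $E$ meets $V_X$ transversely in one positive point, the image $V_{X'}$ of $V_X$ is an embedded sphere with $[V_{X'}]^2 = [V_X]^2 + 1 = -3$. Thus $X = X' \# \overline{\mathbb CP^2}$ may be viewed as the blow-up of $X'$ at a point of $V_{X'}$, with $V_X$ the proper transform of $V_{X'}$ under this blow-up.

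The hypothesis of Lemma \ref{flip} is then satisfied for $(X', V_{X'})$ and $(\mathbb CP^2, 2H)$, since $[V_{X'}]^2 + [2H]^2 = -3 + 4 = 1$. Applying the flip yields
\[
M \;=\; X \#_{V_X = 2H}\mathbb CP^2 \;\cong\; X' \#_{V_{X'} = \widetilde{2H}}\bigl(\mathbb CP^2 \# \overline{\mathbb CP^2}\bigr),
\]
where $\widetilde{2H}$ is the proper transform of $2H$ after blowing up a point on it, lying in class $2H - E$ with self-intersection $+3$. A quick intersection check shows that $\widetilde{2H}$ is in fact a section of the $\mathbb CP^1$-ruling on $\mathbb CP^2 \# \overline{\mathbb CP^2}$: the fiber class $H - E$ satisfies $(H - E)\cdot (2H - E) = 1$. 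Hence $(\mathbb CP^2 \# \overline{\mathbb CP^2}) \setminus \nu(\widetilde{2H})$ is a $D^2$-bundle over $S^2$ of Euler number $-3$ (opposite the section's self-intersection), diffeomorphic to $\nu(V_{X'}) \subset X'$.

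The remaining step is to conclude that excising $\nu(V_{X'})$ from $X'$ and gluing in this disk bundle via the fiber sum map yields $X'$ itself. This is the main obstacle: the boundary lens space $L(3,\pm 1)$ has nontrivial mapping class group, so one must verify that the gluing diffeomorphism prescribed by the fiber sum construction is isotopic to the standard inclusion $\partial \nu(V_{X'}) \hookrightarrow X'$. I would appeal to the $\mathbb CP^1$-ruling on $\mathbb CP^2 \# \overline{\mathbb CP^2}$, whose family of fiber disks absorbs any ambiguity in the identification of the two boundary circle bundles. This is the standard fact that the fiber sum of a $4$-manifold with an $S^2$-bundle along a section reproduces the original manifold, verifiable by direct handlebody inspection, and it immediately gives $M \cong X'$.
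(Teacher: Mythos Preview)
Your proof is correct and follows essentially the same approach the paper intends: it is the one-exceptional-sphere analogue of the proof of Lemma~\ref{nge2}, flipping the exceptional sphere to the $\mathbb CP^2$ side via Lemma~\ref{flip} and then recognizing the resulting $(\mathbb CP^2\#\overline{\mathbb CP^2},\,2H-E)$ as an $S^2$-bundle with section, so the remaining sum is diffeomorphically trivial. Your extra care about the gluing on the lens space boundary is not needed beyond the standard fact (already used in the paper) that summing with an $S^2$-bundle along a section preserves diffeomorphism type.
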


The method of proof is similar to the proof of the following Lemma and will hence be omitted.  The following was observed by R. Gompf \cite{Go}:

\begin{lemma}\label{nge2}
Let $(X,V_X)$ be a relatively minimal smooth pair with $V_X$ an embedded $-4$-sphere. If $X$ contains two disjoint smoothly embedded exceptional spheres each transversely intersecting the hypersurface $V_X$ in a single positive point, then the manifold obtained under $-4$-blow-down of $V_X$ is diffeomorphic to the blow-down of $X$ along one of these spheres.  
  
\end{lemma}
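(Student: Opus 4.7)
The plan is to apply Lemma \ref{flip} once, using one of the two exceptional spheres (say $E_1$), in order to transform the $-4$-blow-down sum into a fiber sum along a section of a Hirzebruch surface, and then invoke the triviality of such sums.

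Let $M := X \#_{V_X = 2H}\mathbb{CP}^2$ denote the $-4$-blow-down and let $X_1 := X \#_{E_1 = H}\mathbb{CP}^2$ be the ordinary blow-down of $X$ along $E_1$, with $V_{X_1}$ the image of $V_X$. Since $E_1$ meets $V_X$ transversely in a single positive point, $V_{X_1}$ has self-intersection $-3$, and $(X,V_X) = (\tilde X_1,\tilde V_{\tilde X_1})$ in the notation of Lemma \ref{flip}: that is, $(X,V_X)$ is obtained from $(X_1,V_{X_1})$ by blowing up the intersection point $E_1 \cap V_X \in V_{X_1}$. The hypothesis $[V_{X_1}]^2 + [2H]^2 = -3 + 4 = 1$ is therefore satisfied, and Lemma \ref{flip} yields
\[
M \;=\; \tilde X_1 \#_{\tilde V_{\tilde X_1} = 2H}\mathbb{CP}^2 \;\cong\; X_1 \#_{V_{X_1} = 2h - e}(\mathbb{CP}^2 \# \overline{\mathbb{CP}^2}),
\]
where $2h - e$ is the proper transform of the conic $2H$ under the blow-up of $\mathbb{CP}^2$ at a point of $2H$, an embedded sphere of self-intersection $3$.

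I next identify $(\mathbb{CP}^2 \# \overline{\mathbb{CP}^2},\,2h - e)$ with the Hirzebruch surface $\mathbb{F}_1$ together with a section of its $\mathbb{P}^1$-fibration: the fiber class is $h - e$, and a direct computation gives $(h - e)\cdot(2h - e) = 1$, so $2h - e$ meets each fiber once. The complement $\mathbb{F}_1 \setminus \nu(2h - e)$ is therefore a $D^2$-bundle over $S^2$ (the base of the ruling) of Euler number $-3$, isomorphic as a disk bundle over $S^2$ to $\nu(V_{X_1}) \subset X_1$. The fiber sum removes $\nu(V_{X_1})$ from $X_1$ and glues $\mathbb{F}_1 \setminus \nu(2h - e)$ in its place along their common $L(3,\pm 1)$ boundary, compatibly with the disk-bundle structure on each side; hence the resulting manifold is diffeomorphic to $X_1$, giving $M \cong X_1 = X \#_{E_1 = H}\mathbb{CP}^2$.

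The second exceptional sphere $E_2$ does not play an explicit role in this one-flip argument, but the symmetric role of $E_1$ and $E_2$ in the hypothesis ensures that the same argument with $E_2$ in place of $E_1$ yields $M \cong X \#_{E_2 = H}\mathbb{CP}^2$. Thus $M$ is diffeomorphic to the blow-down of $X$ along either of the two exceptional spheres, proving the lemma (and incidentally showing that the two blow-downs $X \#_{E_i}\mathbb{CP}^2$ are diffeomorphic to one another). The main technical obstacle is verifying the triviality of the fiber sum along a section of a $\mathbb{P}^1$-bundle: the two disk bundles obviously share the Euler number $-3$, but one must check that the specific boundary identification dictated by the symplectic sum construction respects the disk-bundle structures on each side. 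This is standard in Gompf's framework (cf.\ \cite{G}) and is consistent with the section-type statement in Theorem \ref{minimal}(2).
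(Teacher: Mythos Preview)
Your argument is correct, but it takes a different route from the paper's. You apply Lemma~\ref{flip} \emph{once}, flipping only $E_1$, to rewrite $M$ as a sum of $X_1$ (the blow-down of $X$ along $E_1$) with $\mathbb{CP}^2\#\overline{\mathbb{CP}^2}=\mathbb{F}_1$ along the $+3$-section $2h-e$, and then invoke the triviality of sums along a section. This is really the proof of Lemma~\ref{n=1}; the second sphere $E_2$ plays no role, and Lemma~\ref{nge2} becomes an immediate corollary. The paper instead applies Lemma~\ref{flip} \emph{twice}, flipping both $E_1$ and $E_2$, to obtain $M\cong \tilde X\#_{\tilde V_X}(\mathbb{CP}^2\#2\overline{\mathbb{CP}^2})$ with $\tilde V_X$ a $-2$-sphere and gluing surface $2H-e_1-e_2$. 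It then observes that the line $H-e_1-e_2$ is an exceptional class disjoint from $2H-e_1-e_2$; blowing it down turns the right-hand summand into $S^2\times S^2$ with the diagonal as section, so that $M\cong \tilde X\#\overline{\mathbb{CP}^2}$. Your one-flip argument is shorter and shows that the extra hypothesis in Lemma~\ref{nge2} is superfluous; the paper's two-flip argument makes the symmetry in $E_1,E_2$ explicit by producing the answer in the form (double blow-down)$\#\overline{\mathbb{CP}^2}$, so that the diffeomorphism between the two single blow-downs is immediate rather than obtained by running the argument twice.
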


\begin{proof}
Lemma \ref{flip} allows us to exchange the two exceptional spheres in $X$ for two exceptional spheres in $\mathbb CP^2$ while producing diffeomorphic sums.  This means we blow-down $X$ twice along the exceptional spheres intersecting $V_X$ in a single point and obtain the pair $(\tilde X,\tilde V_X)$, where $\tilde V_X$ is an embedded symplectic $-2$-sphere.  At the same time, we blow up the $4$-sphere $2H\subset \mathbb CP^2$ in two distinct points to obtain a $2$-sphere intersected twice by the exceptional curves.  Then Lemma \ref{flip} implies that $M=X\#_V\mathbb CP^2$ is diffeomorphic to $\tilde X\#_{\tilde V_X}\left(\mathbb CP^2\#2\overline{\mathbb CP^2}\right)$.

\setlength{\unitlength}{1cm}
\begin{picture}(4,6)(-2,-2)
\put(1.5,3){\line(0,-1){4}}
\put(1,3){\line(0,-1){4}}
\put(1,0){\line(-1,0){2}}
\put(1,2){\line(-1,0){2}}

\put(0.7,3.2){$ X\#_V\mathbb CP^2$}
\put(-1,2.2){$E_1$}
\put(-1,0.2){$E_2$}
\put(1.7,-1){$2H$}
\put(0.5,-1){$V_X$}
\put(4,1){$\cong$}

\put(7.5,3){\line(0,-1){4}}
\put(7,3){\line(0,-1){4}}
\put(7.5,2){\line(3,0){2}}
\put(7.5,0){\line(3,0){2}}

\put(6.7,3.2){$ \tilde X\#_{\tilde V}(\mathbb CP^2\#2\overline{\mathbb CP^2})$}

\put(9,2.2){$e_1$}
\put(9,0.2){$e_2$}
\put(7.7,-1){$2H-e_1-e_2$}
\put(6.5,-1){$\tilde V_X$}

\end{picture}

The manifold $\mathbb CP^2\#2\overline{\mathbb CP^2}$ now contains 3 interesting exceptional curves: The two exceptional curves $e_1$ and $e_2$ as well as an exceptional curve in class $[H]-e_1-e_2$ obtained from the unique line $H$ going through the two distinct points on the $4$-sphere which are blown-up.  The blow up has the effect of separating the line $H$ from the sphere $2H$.  Moreover, we can blow down the exceptional curve $[H]-e_1-e_2$ without changing the $2$-sphere.  After this blow-down, we are left with $S^2\times S^2$ and the $2$-sphere is a section of this bundle.  Note that this is a minimal manifold.  The sum with $(\tilde X,\tilde V_X)$ does not change the diffeomorphism type of $\tilde X$, hence 
\[
M\cong\tilde X\#_{\tilde V_X}\left(\mathbb CP^2\#2\overline{\mathbb CP^2}\right)\cong\tilde X\#\overline{\mathbb CP^2}.
\]
\end{proof}

The results of Gompf seem to indicate, that it is not the behavior of symplectic exceptional curves on which we need to concentrate, but rather the behavior of smooth exceptional curves meeting the symplectic hypersurface $V$.  For this reason we make the following definitions:

\begin{definition}Define the following two sets on the symplectic pair $(X,V)$ for a fixed symplectic form $\omega$:  \begin{itemize}
\item $N_{sm}=\{e\in\mathcal E\,\vert\;$ e is represented by a curve transversely intersecting the hypersurface $V_X$ in a single positive point$ \}$
\item $N_{sy}=\mathcal E_\omega\cap N_{sm}$
\end{itemize}
Denote $n_{sm}=\#N_{sm}$ and  $n_{sy}=\#N_{sy}$.  
\end{definition}

Clearly $n_{sy}\le n_{sm}$.  Moreover, if $\kappa(X)\ge 0$, then by Lemma \ref{inters} we have $n_{sm}<\infty$.  However, if $\kappa(X)=-\infty$, then the finiteness of $N_{sm}$ or even $N_{sy}$ can no longer be guaranteed.

\begin{cor}Let $(X,V_X)$ be a symplectic pair with $V_X$ an embedded $-4$-sphere and $n_{sm}\ge 1$.  Then $\kappa(X)=\kappa(M)$.
\end{cor}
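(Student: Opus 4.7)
The plan is to extract the equality directly from Gompf's diffeomorphism result, Lemma~\ref{n=1}, rather than trying to produce an argument from scratch on the symplectic side.

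First, unpack the hypothesis. Since $n_{sm}\ge 1$, the pair $(X,V_X)$ carries a smoothly embedded exceptional sphere $e$ meeting $V_X$ transversely in a single positive point. Combined with relative minimality (the standing assumption of Section~\ref{kodsection} after the reduction preceding Theorem~\ref{main}), the hypotheses of Lemma~\ref{n=1} are satisfied, so the $-4$-blow-down $M=X\#_V\mathbb{CP}^2$ is diffeomorphic to the smooth blow-down $X'$ of $X$ along $e$.

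Next, translate this diffeomorphism into a statement about Kodaira dimension. Writing $X=X'\#\overline{\mathbb{CP}^2}$, the manifolds $X$ and $X'$ share the same minimal model, so Definition~\ref{symp Kod} forces $\kappa(X)=\kappa(X')$. Since symplectic Kodaira dimension is an oriented diffeomorphism invariant by Theorem~\ref{2.4}(2), the diffeomorphism $M\cong X'$ yields $\kappa(M)=\kappa(X')=\kappa(X)$, which is the desired equality.

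The argument has essentially one substantive step, namely the appeal to Lemma~\ref{n=1}; the rest is unwinding definitions. The only point worth noting is that we obtain equality directly, so no separate appeal to Theorem~\ref{main} (or to the assumption $\kappa(X)\ge 0$ used there) is required for the corollary. In particular, the exceptional sphere $e$ itself need not be symplectic — the information that $n_{sm}\ge 1$, rather than the stronger $n_{sy}\ge 1$, is enough because Gompf's lemma and Theorem~\ref{2.4}(2) operate at the smooth level.
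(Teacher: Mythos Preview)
Your proof is correct and follows essentially the same route as the paper: apply Lemma~\ref{n=1} to identify $M$ diffeomorphically with a single blow-down of $X$, then invoke the diffeomorphism invariance of symplectic Kodaira dimension (Theorem~\ref{2.4}). You are simply more explicit than the paper in spelling out why $\kappa(X')=\kappa(X)$ (same minimal model) and in noting that relative minimality is a standing assumption needed to feed into Lemma~\ref{n=1}; the paper compresses all of this into two sentences.
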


\begin{proof}
The Kodaira dimension depends only on the oriented diffeomorphism type, see Thm. \ref{2.4}.  Thus $M$ has the diffeomorphism type of the blow-down of an exceptional sphere in $X$ and thus has unchanged Kodaira dimension.
\end{proof}

This completely answers the question concerning Kodaira dimension if $n_{sm}\ge 1$.

An immediate result of this is that $-4$-spheres produced by 3-point blow-ups do not lead to a change in Kodaira dimension.   

\begin{definition}
Let $(X,V)$ be a smooth pair with $V$ an embedded smooth $-4$-sphere produced by blowing up three distinct points on an exceptional sphere.  We call such a $-4$-sphere $V$ artificial.
\end{definition}

\begin{remark}
Assume that the artificial sphere $V_X$ is obtained by symplectically blowing up a symplectic manifold $\tilde X$.  Then we have $n_{sm}\ge 4$ but $n_{sy}\ge 3$ and $n_{sy}<n_{sm}$ if both are finite.  Hence the numbers $n_{sy}$ and $n_{sm}$ can differ.
\end{remark}

\begin{lemma}If $(X,V_X)$ is an artificial symplectic pair and $M$ the $-4$-blow-down along $V$, then $\kappa(X)=\kappa(M)$. 
\end{lemma}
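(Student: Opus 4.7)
The plan is to extend the argument of Lemma \ref{nge2} from two exceptional spheres to three, since the artificial construction supplies exactly three exceptional spheres meeting $V_X$ transversely in a single positive point. By the definition of ``artificial,'' smoothly $V_X$ is the proper transform of an exceptional sphere $E\subset\tilde X$ under the blow-up of three distinct points; writing $X=\tilde X\#3\overline{\mathbb{CP}^2}$ with new exceptional classes $e_1,e_2,e_3$, we have $V_X = E - e_1 - e_2 - e_3$, and each $e_i$ is represented by a smoothly embedded $-1$-sphere meeting $V_X$ transversely in a single positive point, so $n_{sm}\ge 3$.

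I would then apply Lemma \ref{flip} three times in succession, flipping $e_1$, then $e_2$, then $e_3$ from the $X$-side of the fiber sum to the $\mathbb{CP}^2$-side. The self-intersection hypothesis $[V_X]^2+[V_Y]^2=1$ of Lemma \ref{flip} holds at each stage, since blowing one $e_i$ down raises $V_X^2$ by one while the compensating blow-up lowers the self-intersection of the gluing curve on the $\mathbb{CP}^2$-side by one. After three iterations, $M=X\#_{V_X=2H}\mathbb{CP}^2$ becomes diffeomorphic to
\[
\tilde X\#_{E=C}\bigl(\mathbb{CP}^2\#3\overline{\mathbb{CP}^2}\bigr),
\]
where $C=2H-h_1-h_2-h_3$ is the $+1$-sphere realized as the proper transform of a conic through the three blown-up points.

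The key geometric input is that inside $\mathbb{CP}^2\#3\overline{\mathbb{CP}^2}$ the three classes $H-h_i-h_j$, $\{i,j\}\subset\{1,2,3\}$, are represented by pairwise disjoint smoothly embedded $-1$-spheres (the proper transforms of the three lines through pairs of the blown-up points), and each is disjoint from $C$ because $(H-h_i-h_j)\cdot C=0$. Blowing these three exceptional spheres down, inside balls disjoint from $C$, collapses $\mathbb{CP}^2\#3\overline{\mathbb{CP}^2}$ to $\mathbb{CP}^2$ and sends $C$ to a line $H$; since the blow-downs take place away from the gluing region they commute with the fiber sum along $C$, so
\[
M \cong \bigl(\tilde X\#_{E=H}\mathbb{CP}^2\bigr)\#3\overline{\mathbb{CP}^2} \cong \tilde X_0 \#3\overline{\mathbb{CP}^2},
\]
where $\tilde X_0$ is the ordinary $-1$-blow-down of $E$ in $\tilde X$. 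Since Kodaira dimension is invariant under blow-up and blow-down by Theorem \ref{2.4}, one concludes $\kappa(M)=\kappa(\tilde X_0)=\kappa(\tilde X)=\kappa(X)$.

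The main obstacle is the bookkeeping through the three applications of Lemma \ref{flip}: one must carefully track how the classes of $V_X$, the $e_i$, and the $+4$-sphere on the $\mathbb{CP}^2$-side evolve at each step, and justify cleanly that the auxiliary $-1$-sphere blow-downs inside $\mathbb{CP}^2\#3\overline{\mathbb{CP}^2}$ may be pulled through the fiber sum without disturbing the gluing region around $C$.
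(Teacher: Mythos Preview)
Your argument is correct, but it takes a considerably longer route than the paper. In the paper this lemma is stated with no proof at all: it is an immediate consequence of the preceding Corollary (if $n_{sm}\ge 1$ then $\kappa(X)=\kappa(M)$), together with the observation in the Remark just before that an artificial sphere has $n_{sm}\ge 4$ (indeed $n_{sm}\ge 1$ suffices). In other words, the paper only needs \emph{one} exceptional sphere meeting $V_X$ transversely in a single positive point, and then Lemma~\ref{n=1} already identifies $M$ with a single blow-down of $X$.

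Your three-flip argument followed by the Cremona-type blow-down of the three lines $H-h_i-h_j$ in $\mathbb{CP}^2\#3\overline{\mathbb{CP}^2}$ is valid and the bookkeeping you outline does go through (the three blown-up points lie on the smooth conic $2H$ and hence are non-collinear, so the three line-transforms are genuinely disjoint $(-1)$-spheres disjoint from $C$). What it buys you is an explicit identification $M\cong\tilde X_0\#3\overline{\mathbb{CP}^2}$; but this is the same as what Lemma~\ref{n=1} gives after one flip, since blowing down a single $e_i$ in $X=\tilde X\#3\overline{\mathbb{CP}^2}$ yields $\tilde X\#2\overline{\mathbb{CP}^2}=\tilde X_0\#3\overline{\mathbb{CP}^2}$. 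So the extra two flips and the Cremona step are unnecessary for the statement at hand.
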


For example, if $X$ is irrational ruled, then it was shown in \cite{D}, that every $-4$-sphere  is produced from the 3 point blow-up of an exceptional sphere.  Moreover, there is always a symplectic form making $V_X$ symplectic and admitting  symplectic disjoint embedded exceptional spheres each transversely intersecting the hypersurface $V_X$ in a single positive point.  Thus every symplectic pair $(X,V_X)$ with $V_X$ an embedded symplectic $-4$-sphere in an irrational ruled manifold is artificial.  

\begin{cor}\label{irrat}
 If $X$ is irrational ruled, then $\kappa(X)=\kappa(M)=-\infty$ for any $-4$-blow-down.
\end{cor}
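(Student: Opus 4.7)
The plan is to directly combine three ingredients that are already in hand: the structural result from \cite{D} classifying $-4$-spheres in irrational ruled $4$-manifolds, the preceding lemma for artificial pairs, and the characterization of Kodaira dimension $-\infty$ via ruledness in Theorem \ref{2.4}(4).

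First I would invoke the cited fact from \cite{D}: in an irrational ruled manifold $X$, every embedded $-4$-sphere $V_X$ arises as the proper transform of an exceptional sphere in a minimal model after blowing up three distinct points on that exceptional sphere. By the remark preceding the corollary, there is a symplectic form on $X$ for which the three blow-ups can be chosen symplectic, the resulting $V_X$ is symplectic, and each of the three exceptional divisors is a symplectic sphere meeting $V_X$ transversely in exactly one positive point. Thus $(X,V_X)$ is an artificial symplectic pair in the sense of the Definition just above.

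Next I would apply the preceding Lemma to conclude $\kappa(M)=\kappa(X)$ for the $-4$-blow-down $M$. Finally, since $X$ is ruled, Theorem \ref{2.4}(4) gives $\kappa(X)=-\infty$. Chaining the two equalities yields $\kappa(M)=-\infty$, as claimed.

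The statement is essentially a packaging result rather than a new argument, so there is no substantive obstacle: all the genuine work is absorbed into the cited classification in \cite{D} (that every $-4$-sphere in an irrational ruled manifold is of the asserted artificial form) and into the artificial-pair Lemma (whose proof model is, in turn, the $n_{sm}\ge 1$ case treated via Lemmas \ref{flip} and \ref{nge2}). The only point requiring a brief verification in the write-up is that the symplectic form produced by \cite{D} indeed realizes $V_X$ as a symplectic $-4$-sphere simultaneously with the three required exceptional curves, so that the hypothesis of the artificial-pair Lemma is met for the symplectic structure under which we compute $\kappa$.
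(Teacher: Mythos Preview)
Your proposal is correct and follows exactly the paper's route: the corollary is stated immediately after the paragraph observing (via \cite{D}) that every embedded symplectic $-4$-sphere in an irrational ruled manifold is artificial, so the preceding artificial-pair Lemma gives $\kappa(M)=\kappa(X)$, and Theorem \ref{2.4}(4) gives $\kappa(X)=-\infty$. One small simplification: your final caveat about needing the three exceptional curves to be symplectic for the same form is unnecessary, since ``artificial'' is a smooth notion and the artificial-pair Lemma feeds through the $n_{sm}\ge 1$ Corollary (which uses only smooth exceptional spheres and the diffeomorphism-invariance of $\kappa$).
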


\subsection{The Symplectic Setting for $n_{sm}=0$}

The smooth results in the previous section provide no hint as to the behavior of the $-4$-blow-down in the case that there are no smooth exceptional curves satisfying the restrictions of Lemma \ref{nge2}.  This section deals with the $-4$-blow-down of a relatively minimal symplectic $-4$-sphere $V$ in a non-rational or ruled symplectic manifold $X$ with $n_{sm}=0$.

The key ingredient in the proof of Theorem \ref{main} in this case is the non-existence of symplectic exceptional curves meeting the hypersurface $V_X$ negatively.  The results in \cite{TJL2} ensure that every smooth exceptional sphere is $\mathbb Z$-homologous to a symplectic exceptional sphere, up to sign.  If the existence of exceptional curves with $e\cdot [V]<0$ could be excluded, then to every smooth exceptional curve could be associated a symplectic exceptional curve, i.e. $n_{sm}=n_{sy}$.  Moreover, all our arguments could make use of symplectic methods.

\begin{lemma}\label{k=t}
Let $(X,V_X)$ be a relatively minimal symplectic pair with $\kappa(X)\ge 0$ and $V_X$ an embedded symplectic $-4$-sphere.  Then either $V_X$ is artificial or there exists no symplectic exceptional curve $E\subset X$ such that $E\cdot [V_X]<0$.
\end{lemma}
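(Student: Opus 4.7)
My plan is to choose an $\omega$-tame almost complex structure $J$ making $V_X$ $J$-holomorphic, use Taubes's $\mathrm{SW}=\mathrm{Gr}$ together with positivity of intersection to decompose a $J$-holomorphic representative of $E$, and extract from that decomposition the artificial structure of $V_X$. Suppose $E\in\mathcal{E}_\omega$ satisfies $E\cdot[V_X]<0$. Fix $J$ as above; Taubes's theorem supplies a (possibly reducible) $J$-holomorphic representative $C_E$ of $E$, and since $V_X$ is irreducible and $J$-holomorphic while $E\cdot[V_X]<0$, positivity of intersection forces $V_X$ to appear as a component of $C_E$. Writing $C_E=mV_X+C'$ with $m\ge 1$ and $C'=\sum a_jC_j$ an effective $J$-holomorphic $1$-cycle whose irreducible components $C_j$ differ from $V_X$, intersecting with $V_X$ and $K$ yields $V_X\cdot C'=E\cdot[V_X]+4m\ge 0$ and $K\cdot C'=-1-2m$.

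Next I would invoke the following standard fact in the non-rational/ruled setting with $\kappa(X)\ge 0$: every irreducible $J$-holomorphic curve $C$ satisfies $K\cdot C\ge -1$, with equality if and only if $C$ is an exceptional symplectic sphere (by adjunction, McDuff's rationality criterion for non-negative self-intersection spheres, and nefness of $K$ on the minimal model). Letting $s:=\sum_{C_j\text{ exceptional}}a_j$, the identity $K\cdot C'=-1-2m$ forces $s\ge 1+2m$. By relative minimality of $(X,V_X)$, each exceptional $C_j$ satisfies $C_j\cdot[V_X]\ge 1$, so
\[
V_X\cdot C'\;\ge\;s\;\ge\;1+2m,\qquad E\cdot[V_X]\;\ge\;1-2m.
\]

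The principal obstacle is to force $m=1$. By Lemma~\ref{inters} every class in $\mathcal{E}_\omega$ has the form $\pm E_i$, so $E=\pm E_{j_0}$ is primitive in $H_2(X;\mathbb{Z})$. No component class $[C_j]$ can equal $E$: otherwise removing that component from $C_E$ leaves $mV_X$ together with an effective cycle that is null-homologous yet carries positive $\omega$-area, a contradiction. In the extremal case $V_X\cdot C'=s=1+2m$ one further deduces that no non-exceptional component appears; reducing the equation $E=m[V_X]+[C']$ modulo $m$ in the basis $H_2(X_m)\oplus\langle E_1,\ldots,E_k\rangle$ then forces $\pm 1\equiv 0\pmod m$, so $m=1$. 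The non-extremal cases are excluded by further bookkeeping that combines the self-intersection identity with $\omega$-positivity.

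With $m=1$ the chain of inequalities collapses to equalities: $E\cdot[V_X]=-1$, $V_X\cdot C'=s=3$, no non-exceptional component appears, and each exceptional $C_j$ meets $V_X$ transversely in exactly one positive point. The self-intersection $(C')^2=-3$ combined with the pairwise orthogonality of distinct classes in $\mathcal{E}_\omega$ (Lemma~\ref{inters}) gives $(C')^2=-\sum a_j^2$, and $\sum a_j^2=\sum a_j=3$ with $a_j\ge 1$ integer forces each $a_j=1$. Hence $C'=C_1+C_2+C_3$ is a disjoint union of three distinct exceptional symplectic spheres and $[V_X]=E-[C_1]-[C_2]-[C_3]$; blowing down the pairwise disjoint exceptional curves $C_1,C_2,C_3$ realizes $V_X$ as the proper transform of an embedded symplectic exceptional sphere under three successive point blow-ups, i.e., $V_X$ is artificial.
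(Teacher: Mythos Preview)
Your strategy parallels the paper's---decompose a $J$-holomorphic representative of $E$ and peel off the $V_X$-component---but two steps are not justified. First, the sentence ``the non-extremal cases are excluded by further bookkeeping that combines the self-intersection identity with $\omega$-positivity'' is not a proof: your chain gives only $1+2m\le s\le V_X\cdot C'<4m$, which for $m\ge 2$ imposes no contradiction, and it is not clear how self-intersection and $\omega$-positivity alone rule out configurations with $m\ge 2$ containing extra $(-2)$-sphere bubbles or exceptional components meeting $V_X$ in more than one point. Second, even in the extremal case your deduction that ``no non-exceptional component appears'' is unjustified: equality in your chain only forces such components to satisfy $K\cdot C_j=0$ and $V_X\cdot C_j=0$, not to be absent. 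The same gap recurs in the final paragraph when you set $(C')^2=-\sum a_j^2$.

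The paper closes both gaps with a single device you omit: genericity of $J$ among almost complex structures keeping $V_X$ holomorphic. Since $E$ is a sphere class with nonvanishing Gromov--Witten invariant, Gromov compactness makes every component $C_j$ a \emph{sphere}; adjunction then forces any non-exceptional component to have $C_j^2\le -2$, and such spheres are avoided by a generic choice of $J$. Combined with the identities $E\cdot E_i=0$ (Lemma~\ref{inters}), which force each exceptional $E_i$ to occur with multiplicity exactly $m\,m_i$, the bound $V_X\cdot C'<4m$ becomes $\sum m_i^2<4$, so every $m_i=1$. One obtains $E=m\bigl([V_X]+\sum_{i=1}^{n_{sy}} E_i\bigr)$ outright, and squaring gives $-1=m^2(n_{sy}-4)$, hence $m=1$ and $n_{sy}=3$ simultaneously---no extremal/non-extremal split is needed. (A minor point: the correct input is the nonvanishing genus-zero GW invariant of $E$ together with Gromov compactness rather than Taubes $\mathrm{SW}=\mathrm{Gr}$; this is also what guarantees the $C_j$ are spheres, which your ``standard fact'' $K\cdot C\ge -1$ via McDuff's criterion requires.)
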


\begin{proof}
Let $E_i$, $1\le i\le t$, denote exceptional classes with $E_i\cdot [V_X]= m_i > 0$.  Assume that $E$ is an exceptional class with $E\cdot[V_X]<0$.    

For generic $\omega$-compatible $J$, E is represented by an embedded $J$ - holomorphic submanifold.  Hence, for any $J$, we can find a limiting $J$-holomorphic curve.  By positivity of intersections and unique continuation, it follows that $E=B +m[V_X]$ with $B\cdot[V_X]>0$ and $m>0$.  We may assume that $B$ is represented by embedded, possibly disconnected spheres, some of which may be bubbles appearing in the limit.  Smoothing this curve will produce a symplectic embedded submanifold representing $E$, albeit not necessarily a J-holomorphic submanifold, again keeping the artificial case in mind.

The equality $E_i\cdot E=0$ implies that $(B+m[V_X])\cdot E_i=0$, hence $B\cdot E_i =-m[V_X] \cdot E_i=-mm_i<0$.  Hence we can write
\[
B=\sum_{j=1}^{s}B_{j}+\sum_{i=1}^{n_{sy}}mE_i+\sum_{i=n_{sy}+1}^tmm_iE_i\;\;\;\;\;B_{j}\cdot E_i=0\;\;\;\;B_j\cdot[V_X]\ge 0
\]
where each $B_j$ represents the class of a non-exceptional, possibly multiply covered, embedded sphere.

We now return to $(B+m[V_X])\cdot[V_X]=E\cdot [V_X]<0$.  This implies that $0<B\cdot [V_X]< 4m$.  Hence we obtain from the above decomposition that 
\[
0< \sum_{j=1}^{s}B_{j}\cdot[V_X]+ \sum_{i=1}^t mm_iE_i\cdot[V_X]<4m
\]  
which implies that 
\[
\sum_{i=1}^tmm_iE_i\cdot[V_X]=n_{sy}m+\sum_{i=n+1}^tmm_i^2<4m\;\mbox{  and  }\;m_i\ge 2.
\]
Thus we obtain $n_{sy}=t$ under the assumption $E\cdot [V_X]<0$.

As $E$ is an exceptional curve, we have $K\cdot E=-1$ and $E^2=-1$.  The first leads to
\[
-1=K\cdot (\sum_{i=1}^s B_i+m\sum_{i=1}^{n_{sy}} E_i+m[V_X])=\sum_{i=1}^s K\cdot B_i+(2-n_{sy})m
\]
which implies that 
\[
\sum_{i=1}^s K\cdot B_i\le -1.
\]
Each $B_i$ can be written as $B_i=b_i\tilde B_i$ where $\tilde B_i$ represents the embedded sphere and $b_i$ denotes the degree of the covering.  As $\kappa(X)\ge 0$, we cannot have $K\cdot \tilde B_i\le -2$ for any $i$, hence we must have $K\cdot \tilde B_i=-1$ for at least one $i$.  All classes with $K\cdot \tilde B_i\ge 0$ must be spheres with self-intersection $\le -2$, hence we can prevent the appearance of such a sphere through a generic choice of $J$ making $V_X$ $J$-holomorphic.  It follows that we must have 
\[
s=1,\;B_1=\tilde B_1\mbox{ and }\; K\cdot B_1=-1.
\]
Therefore $B_1$ is an exceptional sphere, which we have disallowed.
%
%
%

The class $E$ can therefore be written as
\[
E=m\left(\sum_{i=1}^{n_{sy}}E_i+[V_X]\right)
\]
with 
\[
-1=E^2=m^2(-n_{sy}+2n_{sy}-4)=m^2(n_{sy}-4).
\]
Thus $m=1$ and $n_{sy}=3$.  Hence we cannot have an exceptional curve meeting $V_X$ negatively unless $V_X$ is artificial.

\end{proof}

\begin{cor}\label{n=n}
Let $(X,V_X)$ be a relatively minimal symplectic pair with $\kappa(X)\ge 0$ and $V_X$ an embedded symplectic $-4$-sphere which is not artificial.  Then $n_{sm}=n_{sy}$.
\end{cor}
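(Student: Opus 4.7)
The plan is to establish the two inclusions $N_{sy}\subseteq N_{sm}$ and $N_{sm}\subseteq N_{sy}$. The first is immediate from the definitions, since any symplectic exceptional representative is in particular a smooth one, so all the work lies in the reverse inclusion.

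First I would take an arbitrary class $E\in N_{sm}$, so by definition $E\cdot[V_X]=1$. Appealing to the result from \cite{TJL2} recalled at the start of this subsection, at least one of $E$ or $-E$ admits an embedded symplectic exceptional representative. Here is where Lemma \ref{k=t} enters: since $V_X$ is not artificial, no symplectic exceptional class $E'$ can satisfy $E'\cdot[V_X]<0$; in particular $(-E)\cdot[V_X]=-1<0$ rules out $-E$. Hence $E$ itself must admit an embedded symplectic exceptional representative $C$.

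To upgrade this to $E\in N_{sy}$, I need $C$ to transversely intersect $V_X$ in a single positive point. I would argue this via positivity of intersections: pick an $\omega$-tame almost complex structure $J$ for which $V_X$ is $J$-holomorphic, and use a standard local perturbation so that $C$ is also $J$-holomorphic near the intersection, or equivalently that the two symplectic spheres meet transversely in the symplectic sense. Each geometric intersection point then contributes positively, and the algebraic count $E\cdot[V_X]=1$ forces exactly one transverse positive intersection. Thus $E\in N_{sy}$, and combining the inclusions yields $N_{sm}=N_{sy}$, hence $n_{sm}=n_{sy}$. I do not anticipate a serious obstacle; the corollary is essentially a formal packaging of Lemma \ref{k=t} together with the smooth-to-symplectic dictionary for exceptional classes, the only mildly technical point being the standard arrangement of transverse positive intersection between the two symplectic spheres.
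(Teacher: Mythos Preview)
Your argument is correct and follows the same route as the paper: take $E\in N_{sm}$, use the result of \cite{TJL2} that $E$ or $-E$ lies in $\mathcal E_\omega$, and rule out $-E$ via Lemma~\ref{k=t}. The only difference is that your final paragraph is unnecessary work stemming from a slight misreading of the definition of $N_{sy}$. In the paper $N_{sy}=\mathcal E_\omega\cap N_{sm}$ is a set-theoretic intersection of classes, not a condition on a single representative: once you know $E\in N_{sm}$ (by hypothesis) and $E\in\mathcal E_\omega$ (which you established), you already have $E\in N_{sy}$. There is no need to arrange the \emph{symplectic} representative $C$ to meet $V_X$ transversely in a single positive point; the smooth representative witnessing $E\in N_{sm}$ continues to serve that role. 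So the positivity-of-intersections discussion can simply be dropped.
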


\begin{proof}
By Cor. 3, \cite{TJL2}, every smooth exceptional curve is $\mathbb Z$-homologous to a symplectic exceptional sphere, up to sign.  If $e$ is represented by a smooth exceptional sphere and $e\in N_{sm}$, then $e\cdot [V_X]=1$.  Hence $-e$ cannot be represented by a symplectic sphere as $-e\cdot[V_X]=-1<0$ contradicting Lemma \ref{k=t}.  Thus, by Lemma \ref{inters}, we have $N_{sm}=N_{sy}$ and hence $n_{sm}=n_{sy}$.
\end{proof}

\begin{lemma}\label{n=t}
Let $(X,\omega_X)$ contain a symplectic $-4$-sphere $V_X$ and assume $(X,V_X)$ is relatively minimal non-artificial pair with $\kappa(X)\ge 0$.  Assume that $X=X_m\#k\overline{\mathbb CP^2}$.  Then $n_{sm}<4$.  Moreover, $X=X_m\#n_{sm}\overline{\mathbb CP^2}$ if $n_{sm}>0$ and $X=X_m$ or $X_m\#\overline{\mathbb CP^2}$ if $n_{sm}=0$.  
\end{lemma}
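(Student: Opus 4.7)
The plan is to blow down the symplectic exceptional curves meeting $V_X$ transversely in a single point, and then to extract constraints from the generalized adjunction inequality applied in the minimal model $X_m$.  First, I would organize the exceptional classes.  By Corollary~\ref{n=n}, $n_{sm}=n_{sy}$, so every exceptional class in $X$ has a symplectic representative.  Lemma~\ref{inters} then gives $\mathcal E=\{\pm E_1,\ldots,\pm E_k\}$ for pairwise disjoint generators $E_1,\ldots,E_k$ of the $\overline{\mathbb{CP}^2}$ factors.  Choosing signs so that each $E_i$ is the symplectic representative, Lemma~\ref{k=t} combined with relative minimality forces $m_i:=E_i\cdot[V_X]\ge 1$, and after reordering I may assume $m_1=\cdots=m_{n_{sm}}=1$ while $m_j\ge 2$ for $j>n_{sm}$.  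Writing $[V_X]=A-\sum m_iE_i$ with $A\in H_2(X_m;\mathbb Z)$, the equations $[V_X]^2=-4$ and (via the adjunction identity $K_X\cdot[V_X]=2$ together with $K_X=K_{X_m}+\sum E_i$) yield $A^2=\sum m_i^2-4$ and $K_{X_m}\cdot A=2-\sum m_i$.

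For the bound $n_{sm}<4$, I would simultaneously blow down the $n_{sm}$ pairwise disjoint symplectic exceptional spheres $E_1,\ldots,E_{n_{sm}}$, obtaining $\tilde X=X_m\#(k-n_{sm})\overline{\mathbb{CP}^2}$ with a symplectic sphere $\tilde V_X\subset\tilde X$ of self-intersection $-4+n_{sm}$ (each blow-down raises the self-intersection by one).  If $n_{sm}\ge 4$, then $\tilde V_X^2\ge 0$, and McDuff's theorem forces $\tilde X$ to be rational or ruled, contradicting $\kappa(\tilde X)=\kappa(X)\ge 0$.

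For the \emph{moreover} statement, suppose toward contradiction that either (a) $n_{sm}>0$ and $k>n_{sm}$, or (b) $n_{sm}=0$ and $k\ge 2$.  In both cases $\sum m_i\ge 3$ and $A^2\ge 1>0$.  The restriction $\pi|_{V_X}$ of the blow-down $\pi\colon X\to X_m$ is an immersion (since $V_X$ meets each $E_i$ transversely), whose image has an $m_i$-tuple point at each blow-down center $p_i$; after a generic perturbation this becomes an immersed $\omega_{X_m}$-symplectic sphere with $\sum\binom{m_i}{2}$ transverse positive double points, and smoothing produces an embedded $\omega_{X_m}$-symplectic surface of genus $D=\sum\binom{m_i}{2}$ representing $A$.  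Since $K_{X_m}$ is a Seiberg--Witten basic class (Taubes) and $A\cdot\omega_{X_m}>0$, the generalized adjunction inequality yields $|K_{X_m}\cdot A|+A^2\le 2D-2$; substituting the two formulas from the first paragraph reduces this to $\sum m_i\le 2$, contradicting $\sum m_i\ge 3$.  Hence $k=n_{sm}$ when $n_{sm}>0$ and $k\le 1$ when $n_{sm}=0$.

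The main obstacle will be the last step: justifying that the image of $V_X$ under $\pi$ admits an embedded symplectic smoothing of the predicted genus $D$, and invoking generalized adjunction when $b^+(X_m)=1$.  The latter requires working in the correct Seiberg--Witten chamber --- the symplectic chamber selected by $A\cdot\omega_{X_m}>0$ --- in which $K_{X_m}$ remains basic and the adjunction bound holds.
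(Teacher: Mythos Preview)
Your argument is correct and follows a genuinely different route from the paper.  The paper proves the lemma by producing, for a suitable almost complex structure making $V_X$ holomorphic, a $J$-holomorphic representative of the canonical class $K_X$ (or of $\pi^*(qK_{X_m})+\sum E_i$ in the $b^+=1$ case via Prop.~5.2 of \cite{LL2}), decomposing the resulting limit curve into components according to their sign of intersection with $V_X$, and extracting numerical constraints on $n$ and $k$ from positivity of intersections together with $K_X\cdot[V_X]=2$ and $K_X\cdot E_i=-1$.  The two cases $B=0$ and $B\ne 0$ and the two regimes $b^+>1$ and $b^+=1$ are treated separately.

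Your approach avoids this curve-degeneration analysis entirely.  The bound $n_{sm}<4$ comes from a clean application of McDuff's theorem after symplectically blowing down the $n_{sm}$ disjoint exceptional spheres (legitimate since $n_{sm}=n_{sy}$ by Corollary~\ref{n=n} and the spheres are disjoint by Lemma~\ref{inters}).  For the relation $k=n_{sm}$ (resp.\ $k\le 1$), you push $V_X$ all the way to the minimal model, smooth the resulting immersed symplectic sphere to an embedded symplectic surface of genus $D=\sum\binom{m_i}{2}$ in class $A$, and then derive $K_{X_m}\cdot A\ge 0$, hence $\sum m_i\le 2$.  This last step is the only place requiring care: for $b^+>1$ it follows from the Seiberg--Witten adjunction inequality with $K_{X_m}$ as basic class; for $b^+=1$ one can either invoke the chamber-dependent version or, more elementarily, use the light-cone lemma in the Lorentzian lattice $H^2(X_m;\mathbb R)$ (since $K_{X_m}^2\ge 0$, $K_{X_m}\cdot\omega\ge 0$, $A^2>0$, $A\cdot\omega>0$, one gets $K_{X_m}\cdot A\ge 0$, with $K_{X_m}$ torsion in the $\kappa=0$ case).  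Your method is more uniform and conceptually direct; the paper's method stays closer to pseudoholomorphic curve theory and does not invoke the Seiberg--Witten adjunction inequality or the light-cone argument explicitly.
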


\begin{proof}
We have already shown that $n_{sm}=n_{sy}$ if $\kappa(X)\ge 0$ and $V_X$ is non-artificial.  Thus we can dispense with the subscript in the following.

{$\bf b^+>1$:}   Then for fixed $(X,V_X,\omega)$, the canonical class $K_X=K$ is represented by an embedded, possibly disconnected, symplectic surface.  Moreover, for generic almost complex structures compatible with $\omega$, this submanifold is $J$-holomorphic.  Thus for any $J$ making $V_X$ $J$-holomorphic, we can find a $J$-holomorphic representative of $K$ via Gromov convergence.  The limit curve $C$ may now have components with bubbles or lying in $V_X$.  We take this into account in the following.  

The canonical class decomposes as follows, each component representing a $J$-holomorphic curve:
\begin{itemize}
\item $A\in H_2(X)$ with $A\cdot [V_X]=0$,
\item $B_0\in H_2(X)$, not an exceptional class, with $B_0\cdot [V_X]>0$,
\item $B\in H_2(X)$, not an exceptional class, with $B\cdot V_X<0$ and
\item $E_i$ exceptional classes with $E_i\cdot [V_X]= m_i > 0$
\end{itemize}
such that 
\[
K=A+B_0+B+\sum_{i=1}^kE_i.
\]
By positivity of intersections and unique continuation, it follows that $B=\sum_i(B_i +b_i[V_X])$ with each $(B_i+b_i[V_X])\cdot[V_X]<0$ while $B_i\cdot[V_X]>0$.  

Note, if $V_X$ itself is a connected component of the limit curve $C$ representing $K$, then $X$ is minimal.  This follows by contradiction, assuming $n>0$ and using positivity of intersections and $K\cdot E=-1$.  Thus in this case $n=k=0$.  

Assume that $B=0$.  Then
\[
2=K\cdot [V_X]=B_0\cdot[V_X]+\sum_{i=1}^kE_i\cdot[V_X]=B_0\cdot[V_X]+n+\sum_{i=n+1}^kE_i\cdot[V_X]
\]
where $E_i\cdot[V_X]\ge 2$.  Hence $n\le 2$ and if $n=1,2$ we obtain $k=n$ while if $n=0$ we could have at most $k=1$.

Assume now that $B\ne 0$.  We proceed as in the proof of Lemma \ref{k=t}.  The equality $K\cdot E_j=-1$ implies that $(B_i+b_i[V_X])\cdot E_j=0$, hence $B_i\cdot E_j =-b_i[V_X] \cdot E_j=-b_im_j$.  Hence we can write
\[
B_i=\sum_{j=1}^{s_i}B_{ij}+\sum_{j=1}^kb_im_jE_j\;\;\;\;\;B_{ij}\cdot E_k=0\;\;\;\;\;B_{ij}\cdot [V_X]\ge 0.
\]

We now return to $(B_i+b_i[V_X])\cdot[V_X]<0$, this implies that $0< B_i\cdot [V_X]< 4b_i$.  Hence we obtain from the above decomposition that 
\[
0< \sum_{j=1}^{s_i}B_{ij}\cdot[V_X]+ \sum_{j=1}^k b_im_jE_j\cdot[V_X]<4b_i
\]  
which implies that 
\[
s_i+b_in+\sum_{j=n+1}^kb_im_j^2<4b_i,\;\;m_j\ge 2.
\]
Thus we obtain $n=k<4$.

{$\bf b^+=1$:} The proof in the $b^+>1$ case fails to transfer as we cannot ensure that the canonical class is represented by an embedded symplectic submanifold.  However, under the assumption $\kappa(X)\ge 0$, Prop. 5.2 in \cite{LL2}, ensures that $A=\pi^*(qK_{X_m})+\sum_iE_i$ is representable by an embedded symplectic submanifold $C_{q}$ for $q\ge 2$.  Note that this does not depend on the choice of almost complex structure $J$ compatible with $\omega$.  As before, we can take a limit curve and decompose $A$ as in the $b^+>1$ case.  In particular, if $B\ne 0$, then the argument is exactly as before as we still have $A\cdot E_i=-1$.  If we have $B=0$, then  $A\cdot [V_X]\ge 0$.  Using $2q-(q-1)\sum_i E_i[V_X]=A\cdot[V_X]$ it follows that
\[
3>\frac{2q}{q-1}\ge n+\sum_{i=n+1}^kE_i\cdot[V_X]
\]
for $q$ large enough.  Recall that $E_i\cdot[V_X]\ge 2$ for all $E_i$ in the last sum.  Thus again we have $n\le 2$ and  $k=n$ if $n=1,2$ and $k\le 1$ if $n=0$.
\end{proof}

With these results we are now ready to complete the proof of Theorem \ref{main}.  The following simple Lemma notes the change in $K^2$ and $K\cdot \omega$ under $-4$-blow-down:

\begin{lemma}\label{rbd}
Let $(X,\omega_X)$ contain a symplectic $-4$-sphere $V_X$ and denote $(M, \omega_M)$ the $-4$-blow-down of $V_X$.  Assume that $V_X\subset X$ and $2H\subset \mathbb CP^2$ have the same symplectic area.  Then
\[
K_M\cdot\omega_M=K_X\cdot\omega_X+\frac{1}{2}\omega_X\cdot[V_X]
\]
and
\[
K_M^2=K_X^2+1.
\]

\end{lemma}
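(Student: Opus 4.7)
The plan is to specialize Lemmas \ref{formulas} and \ref{symp} to the fiber sum $M=X\#_{V_X=2H}\mathbb{CP}^2$ realizing the $-4$-blow-down, and then to read off both identities using only $K_{\mathbb{CP}^2}=-3H$, adjunction for the sphere $V_X$, and the equal-symplectic-area hypothesis.

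For the cup-product identity, I would write $\omega_{\mathbb{CP}^2}=tH$ as a cohomology class. The assumption that $V_X$ and $2H$ have equal symplectic area then reads $\omega_X\cdot[V_X]=2t$, i.e.\ $t=\frac{1}{2}\omega_X\cdot[V_X]$. Substituting into Lemma \ref{symp} gives
\[
K_M\cdot\omega_M = K_X\cdot\omega_X + (-3t) + \omega_X\cdot[V_X] + 2t = K_X\cdot\omega_X + \frac{1}{2}\omega_X\cdot[V_X],
\]
which is the first identity.

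For the self-intersection identity, I would apply Lemma \ref{formulas}(2). The formula in Lemma \ref{formulas}(1) rearranges as $K_M\cdot A = (K_X+[V_X])\cdot A_X + (K_Y+[V_Y])\cdot A_Y$, identifying the natural decomposition of $K_M$ under the fiber sum as $(K_X+[V_X],\,K_{\mathbb{CP}^2}+2H) = (K_X+[V_X],\,-H)$. Adjunction for the sphere $V_X$ of self-intersection $-4$ gives $K_X\cdot[V_X]=2$, so $(K_X+[V_X])^2 = K_X^2 + 4 - 4 = K_X^2$, while $(-H)^2=1$. Adding the two sides yields $K_M^2 = K_X^2 + 1$.

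There is no substantive obstacle: both formulas are immediate bookkeeping on top of Lemmas \ref{formulas} and \ref{symp}. The only minor subtlety is the factor $\frac{1}{2}$ in the first identity, which simply records that $V_X$ is glued to the quadratic divisor $2H$ rather than to $H$ itself, so the symplectic area of $H$ in $\mathbb{CP}^2$ must equal half the symplectic area of $V_X$ in $X$.
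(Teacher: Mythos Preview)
Your proof is correct and follows essentially the same route as the paper: both identities are obtained by specializing Lemmas \ref{formulas} and \ref{symp} to the sum $X\#_{V_X=2H}\mathbb{CP}^2$, using $K_{\mathbb{CP}^2}=-3H$ and the equal-area hypothesis for the first, and the decomposition $K_M\leftrightarrow(K_X+[V_X],\,-H)$ together with adjunction for the second. Your version is slightly more explicit (introducing $t$ and spelling out the adjunction computation $K_X\cdot[V_X]=2$), but there is no substantive difference in method.
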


\begin{proof}
Lemma \ref{symp} shows that
\[
K_M\cdot\omega_{M}= K_X\cdot\omega_X-(3[H])\cdot\omega_{\mathbb CP^2}+ 2\left(\omega_{\mathbb CP^2}\cdot 2[H]\right).
\]
The assumption on the symplectic areas translates as $\omega_X\cdot[V_X]=\omega_{\mathbb CP^2}\cdot2[H]$.  Hence
\[
K_M\cdot\omega_{M}= K_X\cdot\omega_X+\omega_{\mathbb CP^2}\cdot [H]=K_X\cdot\omega_X+\frac{1}{2}\omega_X\cdot[V_X].
\]
The second result follows directly:
\[
K_M^2=\left(K_X+[V_X]\right)^2+(-3[H]+2[H])^2=K_X^2+1.
\]
\end{proof}

\begin{lemma}\label{noinf}Let the relatively minimal pair $(X,V_X)$ contain a symplectic $-4$-sphere $V_X$ and let $M$ be the $-4$-blow-down along $V_X$.
Assume that $\kappa(X)\ge 0$ and $n_{sm}=0$.  Then $\kappa(M)\ge 0$.
\end{lemma}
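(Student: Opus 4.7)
The plan is to verify $\kappa(M)\geq 0$ directly from the blow-down formulas in Lemma~\ref{rbd}, after first showing that $M$ is minimal. The hypothesis $n_{sm}=0$ forces $(X,V_X)$ to be non-artificial (an artificial $-4$-sphere carries $n_{sm}\geq 3$), so Corollary~\ref{n=n} gives $n_{sy}=n_{sm}=0$, and Lemma~\ref{n=t} restricts $X$ to be either minimal, $X=X_m$, or $X=X_m\#\overline{\mathbb{CP}^2}$.

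Next I would apply Theorem~\ref{minimal} to $M=X\#_{V_X}\mathbb{CP}^2$ with $Y=\mathbb{CP}^2$ and $V_Y=2H$. The relative minimality of $(X,V_X)$ and the absence of symplectic exceptional spheres in $\mathbb{CP}^2$ rule out obstruction~(1a), and since $n_{sy}=0<2$, obstruction~(1b) also fails; the bundle-with-section case does not apply to $(\mathbb{CP}^2,2H)$. Therefore $M$ is minimal, and $\kappa(M)$ can be read off directly from the pair $(K_M\cdot\omega_M,\,K_M^2)$ via Definition~\ref{symp Kod}.

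After normalizing so that $V_X$ and $2H$ share the same symplectic area, Lemma~\ref{rbd} gives
\[
K_M^2=K_X^2+1,\qquad K_M\cdot\omega_M=K_X\cdot\omega_X+\tfrac{1}{2}\omega_X\cdot[V_X],
\]
with $\omega_X\cdot[V_X]>0$ since $V_X$ is symplectic. If $X=X_m$ is minimal then $\kappa(X)\geq 0$ provides $K_X^2\geq 0$ and $K_X\cdot\omega_X\geq 0$, so $K_M^2\geq 1$ and $K_M\cdot\omega_M>0$. If instead $X=X_m\#\overline{\mathbb{CP}^2}$ with symplectic exceptional class $E$, then Lemma~\ref{bd} gives $K_X\cdot\omega_X=K_{X_m}\cdot\omega_{X_m}+\omega_X\cdot E>0$ and $K_X^2=K_{X_m}^2-1$, so $K_M^2=K_{X_m}^2\geq 0$ and again $K_M\cdot\omega_M>0$. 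In both cases Definition~\ref{symp Kod} yields $\kappa(M)\geq 0$ (indeed $\kappa(M)\geq 1$).

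The substance of the argument is packed into the structural constraints already obtained: Lemma~\ref{k=t} and the ensuing Lemmas~\ref{n=n},~\ref{n=t} are what make the non-minimal portion of $X$ controllable, preventing $K_X^2$ from dropping far enough below zero to swamp the $+1$ contribution from the $-4$-blow-down, and the exclusion of the second obstruction in Theorem~\ref{minimal} is precisely the statement that $n_{sy}<2$. Once these ingredients are in place, the lemma reduces to the short intersection-number calculation above.
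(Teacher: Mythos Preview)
Your proof is correct and follows the same route as the paper: use $n_{sm}=0$ together with Theorem~\ref{minimal} to see that $M$ is minimal, invoke Lemma~\ref{n=t} to bound how far $X$ is from minimal, and then apply the blow-down formulas of Lemma~\ref{rbd}. The paper's own proof is terser---it records only $K_X^2\ge -1$ and hence $K_M^2\ge 0$---whereas you also verify $K_M\cdot\omega_M>0$ explicitly and thereby notice the stronger conclusion $\kappa(M)\ge 1$, which the paper obtains only later (Lemmas~\ref{Kw} and~\ref{k=0}).
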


\begin{proof}
As $n_{sm}=n_{sy}=0$, Thm. \ref{minimal} ensures that $M$ is minimal.  Hence we obtain from Lemma \ref{n=t} that 
\[
K_X^2\ge -1
\]
and thus with Lemma \ref{rbd}
\[
K_M^2\ge 0.
\]
\end{proof}

\begin{lemma}\label{Kw}
Let the relatively minimal pair $(X,V_X)$ contain a symplectic $-4$-sphere $V_X$ and let $M$ be the $-4$-blow-down along $V_X$.  Assume that $n_{sm}=0$.  If $\kappa(X)\ge 1$, then $\kappa(M)\ge 1$.
\end{lemma}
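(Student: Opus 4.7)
The plan is to verify directly that the two invariants $K_M \cdot \omega_M$ and $K_M^2$ determining the Kodaira dimension of the (minimal) manifold $M$ lie in the correct range, by combining Lemmas \ref{rbd}, \ref{bd}, and \ref{n=t}.

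First I would show that $M$ is minimal. Since $(X,V_X)$ is relatively minimal and $\mathbb{CP}^2$ is minimal, the first bullet of Theorem \ref{minimal} is excluded. The second bullet requires $n_{sm}\ge 2$, which is ruled out by the hypothesis $n_{sm}=0$. Hence $M$ is minimal and $\kappa(M)$ can be read off directly from $K_M\cdot \omega_M$ and $K_M^2$ via Definition \ref{symp Kod}. Moreover, since $n_{sm}=0<3$ the sphere $V_X$ cannot be artificial, so Lemma \ref{n=t} forces $X \in \{X_m,\; X_m \# \overline{\mathbb{CP}^2}\}$.

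The central computation is Lemma \ref{rbd}:
\[
K_M \cdot \omega_M \;=\; K_X \cdot \omega_X + \tfrac{1}{2}\omega_X \cdot [V_X], \qquad K_M^2 \;=\; K_X^2 + 1.
\]
Since $V_X$ is symplectic, $\omega_X \cdot [V_X] > 0$; thus it suffices to prove $K_X \cdot \omega_X > 0$ and $K_X^2 \ge -1$, which together yield $K_M \cdot \omega_M > 0$ and $K_M^2 \ge 0$, hence $\kappa(M)\ge 1$.

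To establish these two bounds on $K_X$ I would split into the two cases provided by Lemma \ref{n=t}. If $X=X_m$, the hypothesis $\kappa(X)\ge 1$ together with Definition \ref{symp Kod} gives $K_X\cdot\omega_X>0$ and $K_X^2\ge 0$ immediately. If $X=X_m\#\overline{\mathbb{CP}^2}$, let $E$ denote the unique exceptional class (by Lemma \ref{inters}); applying Lemma \ref{bd} to the classical blow-down $X\to X_m$ yields $K_X\cdot\omega_X = K_{X_m}\cdot\omega_{X_m}+\omega_X\cdot E>0$ (both summands positive since $\kappa(X_m)\ge 1$ and $E$ is symplectic) and $K_X^2=K_{X_m}^2-1\ge -1$. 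The chief bookkeeping obstacle is in this second case, where one must correctly combine the sign conventions of the standard blow-down $X\to X_m$ with those of the $-4$-blow-down $X\to M$; once this is handled cleanly, no new ingredient beyond the three lemmas cited above is required.
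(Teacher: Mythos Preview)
Your proposal is correct and follows essentially the same route as the paper: establish minimality of $M$ via Theorem~\ref{minimal}, then use Lemma~\ref{rbd} together with $K_X\cdot\omega_X>0$ and $K_X^2\ge -1$ to conclude. The only organizational difference is that the paper cites the preceding Lemma~\ref{noinf} for $K_M^2\ge 0$ and then handles $K_X\cdot\omega_X>0$ abstractly via Cor.~\ref{mbd}, whereas you re-derive both inequalities directly by the case split $X\in\{X_m,\,X_m\#\overline{\mathbb{CP}^2}\}$ from Lemma~\ref{n=t}; this is the same content unpacked.
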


\begin{proof}We have shown that $\kappa(X)\ge 0$ implies $\kappa(M)\ge 0$.  Assuming $\kappa(X)\ge 0$, we can state:  $\kappa(X) \ge 1$ if and only if $K_X\cdot\omega_X>0$.  Thus we need only show that $K_{M_m}\cdot \omega_{M_m}>0$ holds for the minimal model $M_m$ of $M$.  

Moreover, Cor. \ref{mbd} shows, that  under blow-ups, $K_X\cdot\omega_X$ increases, i.e. if $\kappa(X)\ge 1$, the sign of $K_X\cdot\omega_X$ is unchanged under blow-ups.  

$M$ is minimal and thus  
\[
K_{M_m}\cdot \omega_{M_m}=K_M\cdot\omega_{M}= K_X\cdot\omega_X+\frac{1}{2}\omega_X\cdot[V_X] >0.
\]
\end{proof}

\begin{lemma}
Assume that $n_{sm}=0$ and $X$ contains a symplectically embedded $-4$-sphere $V_X$ such that $(X,V_X)$ is relatively minimal.  If $\kappa(X)=2$, then $\kappa(M)=2$.
\end{lemma}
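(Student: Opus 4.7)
The plan is to combine the previous two lemmas with the structural result of Lemma \ref{n=t} and the blow-up formulas in Corollary \ref{mbd} and Lemma \ref{rbd}. Since we have already established, under the hypotheses $n_{sm}=0$ and $\kappa(X)\ge 1$, that $M$ is minimal (Theorem \ref{minimal}) and that $K_M\cdot\omega_M>0$ (Lemma \ref{Kw}), it suffices to show the additional inequality $K_M^2>0$, for then the definition of symplectic Kodaira dimension forces $\kappa(M)=2$.

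The heart of the argument is to control $K_X^2$. By Lemma \ref{n=t}, the hypothesis $n_{sm}=0$ together with $\kappa(X)\ge 0$ (and $V_X$ non-artificial, which is automatic as the artificial case requires $n_{sm}\ge 4$) forces $X$ to be either $X_m$ itself or $X_m\#\overline{\mathbb{CP}^2}$, where $X_m$ is the unique minimal model of $X$. Since $\kappa(X)=2$, we have $K_{X_m}^2>0$, hence $K_{X_m}^2\ge 1$ as it is an integer. Applying Corollary \ref{mbd} to the finite (at most one) blow-down from $X$ to $X_m$ gives
\[
K_X^2 \;=\; K_{X_m}^2 - k \;\ge\; K_{X_m}^2 - 1 \;\ge\; 0,
\]
where $k\in\{0,1\}$ counts the exceptional curves in $X$.

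Finally, Lemma \ref{rbd} yields $K_M^2 = K_X^2 + 1 \ge 1 > 0$. Combined with $K_M\cdot\omega_M>0$ from Lemma \ref{Kw} and the minimality of $M$ from Theorem \ref{minimal}, Definition \ref{symp Kod} gives $\kappa(M)=2$, completing the proof. The main conceptual step — and the one requiring the preceding work — is invoking the classification in Lemma \ref{n=t}, which is what prevents $X$ from being a large blow-up that could drive $K_X^2$ negative; everything else is a direct integer estimate via the two blow-down formulas.
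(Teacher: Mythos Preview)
Your proof is correct and follows essentially the same route as the paper: use Lemma~\ref{n=t} to bound the number of blow-ups (hence $K_X^2\ge 0$ from $K_{X_m}^2\ge 1$), then apply Lemma~\ref{rbd} to get $K_M^2>0$ on the minimal manifold $M$. The paper's version is terser and does not explicitly restate the $K_M\cdot\omega_M>0$ step (it is implicit from the preceding Lemma~\ref{Kw}), but the substance is the same.
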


\begin{proof}$M$ is again minimal and thus by Lemma \ref{rbd} we have
\[
K_X^2>-1
\]
which implies
\[
K_M^2>0
\]
by Lemma \ref{n=t}.
\end{proof}

This completes the proof of Thm. \ref{main}.

\subsection{The Structure of $M$ for $n_{sm}>0$}  The $-4$-blow-down for $n_{sm}>0$ leaves the Kodaira dimension unchanged and the manifold $M$ is diffeomorphic to the blow-down of $X$ along any exceptional sphere in $N_{sm}$.  However, we can be slightly more precise in some situations.

Lemma \ref{k=t} and \ref{n=t} allow us to determine the $-4$-blow-down $M$ of $X$ if $n_{sm}>0$ and $\kappa(X)\ge0$:

\begin{cor}\label{n2blow}
If $n_{sm}>0$ and $\kappa(X)\ge 0$, then $M$ is diffeomorphic to $X_m\#(n_{sm}-1)\overline{\mathbb CP^2}$.
\end{cor}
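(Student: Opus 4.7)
The strategy is to identify $M$ as an ordinary smooth blow-down of $X$ using Lemma \ref{n=1}, and then substitute the explicit diffeomorphism type of $X$ furnished by Lemma \ref{n=t}.

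Since $n_{sm}\geq 1$, I would fix any smooth exceptional sphere $S\subset X$ whose class lies in $N_{sm}$. By Lemma \ref{n=1}, $M$ is then diffeomorphic to the ordinary smooth blow-down $X'$ of $X$ along $S$. This operation removes a single exceptional $S^2$, and because $\kappa(X)\geq 0$ forces $X$ to be neither rational nor ruled, Lemma \ref{inters} guarantees that the minimal model $X_m$ is unique and that exceptional classes are precisely the generators $\pm E_i$ of the $\overline{\mathbb CP^2}$ summands. Hence the blow-down $X'$ simply discards one such summand, and the problem reduces to verifying that $X$ itself has precisely $n_{sm}$ of them.

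In the non-artificial case this is immediate from Lemma \ref{n=t}, which gives $X\cong X_m\# n_{sm}\overline{\mathbb CP^2}$ directly, so $X'\cong X_m\#(n_{sm}-1)\overline{\mathbb CP^2}$ as required. For completeness, in the artificial case, relative minimality together with Lemma \ref{inters}(2) forces the pre-blow-up manifold $\tilde X$ to be $X_m\#\overline{\mathbb CP^2}$ (any exceptional class of $\tilde X$ disjoint from the blow-up sphere would descend to one in $X$ disjoint from $V_X$), hence $X\cong X_m\#4\overline{\mathbb CP^2}$; a direct enumeration of exceptional classes then gives $n_{sm}=4$, and Lemma \ref{n=1} again yields $M\cong X_m\#3\overline{\mathbb CP^2}$. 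There are no substantive obstacles beyond this bookkeeping: the essential geometric content has already been captured by Lemmas \ref{n=1} and \ref{n=t}, and the uniqueness of the minimal model in the non-rational, non-ruled setting does the rest.
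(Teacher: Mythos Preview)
Your proof is correct and follows the same two-step strategy as the paper: first invoke Lemma~\ref{n=t} to write $X=X_m\#n_{sm}\overline{\mathbb CP^2}$, then apply Lemma~\ref{n=1} (the paper also cites Lemma~\ref{nge2}) to identify $M$ with the blow-down of one exceptional summand. Your treatment is in fact slightly more careful than the paper's, since you handle the artificial case explicitly---Lemma~\ref{n=t} is stated only for non-artificial pairs, and the paper defers the artificial case to the later Lemma~\ref{n3}---but the underlying argument is the same.
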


\begin{proof}
We can write $X=X_m\#n_{sm}\overline{\mathbb CP^2}$ by Lemma \ref{n=t}.  Hence by Lemma \ref{nge2} or \ref{n=1} it follows that  $M\cong X_m\#(n_{sm}-1)\overline{\mathbb CP^2}$.
\end{proof}

Note that this result is stronger than obtained in Lemma \ref{nge2}, as we have precise knowledge of the number of exceptional curves in $X$, whereas Lemma \ref{nge2} also applies to $\kappa(X)=-\infty$, for which no such precise statement exists.

\begin{lemma}\label{n4}
Assume that $n_{sm}> 4$ or $n_{sy}\ge 4$.  Then $\kappa(X)=\kappa(M)=-\infty$.
\end{lemma}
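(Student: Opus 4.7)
The plan is to establish $\kappa(X) = -\infty$ first, and then deduce $\kappa(M) = -\infty$ from it using the smooth blow-down result of Lemma~\ref{n=1}.

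For $\kappa(X) = -\infty$, I would argue by contradiction, assuming $\kappa(X) \geq 0$ and splitting into two sub-cases according to whether $V_X$ is artificial. If $V_X$ is not artificial, Lemma~\ref{n=t} directly gives $n_{sm} < 4$, and Corollary~\ref{n=n} then yields $n_{sy} = n_{sm} \leq 3$; this contradicts both $n_{sm} > 4$ and $n_{sy} \geq 4$. If $V_X$ is artificial, I would write $V_X = e - e_1 - e_2 - e_3$, where $e$ is the class of the exceptional sphere in some $\tilde X$ whose blow-up at three distinct points produces $X$ together with the new exceptional classes $e_1, e_2, e_3$. Since $\kappa(X) \geq 0$, the manifold $X$ is not rational or ruled by Theorem~\ref{2.4}(4), so Lemma~\ref{inters} identifies $\mathcal{E} = \{\pm E_i\}$ with the basis of exceptional classes coming from the $\overline{\mathbb{CP}^2}$ summands. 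The only such basis elements meeting $V_X$ nontrivially are $\pm e$ and $\pm e_i$, and a direct intersection computation gives $N_{sm} = \{-e, e_1, e_2, e_3\}$, so $n_{sm} = 4$. For $N_{sy}$, the $e_i$ are represented by the symplectic exceptional blow-up divisors, but since $\omega_X \cdot e = \omega_{\tilde X} \cdot e > 0$, the class $-e$ has negative symplectic area and is not represented by any symplectic sphere; hence $n_{sy} = 3$. This again contradicts the hypothesis, forcing $\kappa(X) = -\infty$.

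For $\kappa(M) = -\infty$, I would observe that under either hypothesis $n_{sm} \geq 1$ (since $n_{sy} \leq n_{sm}$). Then Lemma~\ref{n=1} applied to the relatively minimal pair $(X,V_X)$ shows $M$ is diffeomorphic to the ordinary blow-down of $X$ along any smooth exceptional sphere in $N_{sm}$. Since $X$ is rational or ruled by Theorem~\ref{2.4}(4), and ordinary blow-down preserves this property (it removes one $\overline{\mathbb{CP}^2}$ summand from the minimal model decomposition), the manifold $M$ is also rational or ruled, so $\kappa(M) = -\infty$ by Theorem~\ref{2.4}(4) again.

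The delicate step is the artificial case in the first part: one must carefully enumerate the smooth exceptional classes via Lemma~\ref{inters} and then, crucially, use the positivity of the symplectic area $\omega_X \cdot e = \omega_{\tilde X}\cdot e > 0$ to exclude $-e$ from $\mathcal{E}_\omega$. Once $\kappa(X) = -\infty$ is in hand, the $M$-statement falls out transparently from Lemma~\ref{n=1}, so the entire argument reduces cleanly to the enumeration of exceptional classes intersecting $V_X$ in exactly one positive point.
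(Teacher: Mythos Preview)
Your argument is correct and follows essentially the same route as the paper: establish $\kappa(X)=-\infty$ by contradiction via Lemma~\ref{n=t} in the non-artificial case and the direct count $n_{sm}=4$, $n_{sy}=3$ in the artificial case, then conclude $\kappa(M)=-\infty$ from the smooth blow-down description. The only cosmetic differences are that the paper organizes the contradiction by which hypothesis ($n_{sm}>4$ versus $n_{sy}\ge 4$) is in force rather than by whether $V_X$ is artificial, and it cites Lemma~\ref{nge2} instead of Lemma~\ref{n=1} for the last step; your choice of Lemma~\ref{n=1} is equally valid and has the milder hypothesis.
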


\begin{proof}
Assume $n_{sm}> 4$.  Then Lemma \ref{n=t} and the fact that $n_{sm}=4$ for artificial spheres in manifolds with $\kappa(X)\ge 0$ implies that $\kappa(X)=-\infty$.  Then it follows from Lemma \ref{nge2} that $\kappa(M)=-\infty$.

This leaves only the case $n_{sy}=n_{sm}=4$.  Hence, if $\kappa(X)\ge 0$, by Lemma \ref{n=t}, $V_X$ is artificial.  However, artificial spheres in positive Kodaira dimension have $n_{sy}=3$, thus again $\kappa(X)=-\infty$ and hence $\kappa(M)=-\infty$.

%
\end{proof}

Moreover, if $n_{sy}=3$, then we obtain

\begin{lemma}\label{n3}Assume $X$ contains a symplectic $-4$-sphere and $\kappa(X)\ge 0$.  Let $n_{sy}=3$.  Then $V_X$ is artificial, $X=X_m\#4\overline{\mathbb CP^2}$ and $M\cong X_m\#3\overline{\mathbb CP^2}$.  
\end{lemma}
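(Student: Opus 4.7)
The plan is to prove three successive assertions: (a) $V_X$ is artificial, (b) $X = X_m \# 4\overline{\mathbb{CP}^2}$, and (c) $M \cong X_m \# 3\overline{\mathbb{CP}^2}$.

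For (a), I argue by contradiction. Suppose $V_X$ is not artificial. By Corollary \ref{n=n}, $n_{sm} = n_{sy} = 3$, and Lemma \ref{n=t} then forces $X = X_m \# 3\overline{\mathbb{CP}^2}$ with the three exceptional classes $E_1, E_2, E_3$ all lying in $N_{sy}$, each meeting $V_X$ transversely in exactly one positive point. Since $\kappa(X) \ge 0$, $X$ is not rational or ruled, so Lemma \ref{inters}(2) guarantees that the symplectic representatives of the $E_i$ are pairwise disjoint. Performing the three symplectic blow-downs in succession returns $X$ to its minimal model $X_m$. Because $V_X$ meets each $E_i$ transversely in a single positive point, $V_X$ descends under the blow-downs to a smoothly embedded symplectic sphere $v \subset X_m$ with
\[
[v]^2 \;=\; [V_X]^2 + \sum_{i=1}^{3} ([V_X]\cdot E_i)^2 \;=\; -4 + 3 \;=\; -1.
\]
This is a symplectic $(-1)$-sphere in $X_m$, contradicting the symplectic minimality of $X_m$. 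Hence $V_X$ must be artificial.

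For (b), the definition of artificial supplies a smooth manifold $\tilde X$ containing an embedded exceptional sphere $\tilde E$ together with three distinct points on $\tilde E$ whose blow-up produces $(X, V_X)$ with $V_X$ the proper transform. Writing $E_1, E_2, E_3$ for the new exceptional classes, this gives $X = \tilde X \# 3\overline{\mathbb{CP}^2}$ and $[V_X] = [\tilde E] - E_1 - E_2 - E_3$. Since $\kappa(\tilde X) = \kappa(X) \ge 0$, the minimal model of $\tilde X$ is unique (Lemma \ref{inters}(1)) and equals $X_m$. Thus $\tilde X = X_m \# m \overline{\mathbb{CP}^2}$ with $m \ge 1$ (accounting for $\tilde E$). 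If $m > 1$, then any exceptional class $e$ of $\tilde X$ distinct from $\tilde E$ satisfies $e \cdot \tilde E = 0$ by Lemma \ref{inters}(2), whence $e \cdot [V_X] = 0$, contradicting the relative minimality of $(X, V_X)$. Hence $m = 1$ and $X = X_m \# 4\overline{\mathbb{CP}^2}$.

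For (c), the three pairwise disjoint symplectic exceptional spheres representing $E_1, E_2, E_3$ each meet $V_X$ transversely in a single positive point, so Lemma \ref{nge2} applies to any pair of them and identifies $M$ with the smooth blow-down of $X$ along (say) $E_1$. Since $X = X_m \# 4\overline{\mathbb{CP}^2}$, this yields $M \cong X_m \# 3\overline{\mathbb{CP}^2}$.

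The main obstacle is the blow-down argument in step (a): one must confirm that the three symplectic blow-downs can be performed sequentially and that $V_X$ descends through all of them to a smooth symplectic sphere of the claimed self-intersection. Once the transversality and disjointness have been secured, the descent is standard, but the resulting contradiction with the minimality of $X_m$ is the decisive geometric reduction.
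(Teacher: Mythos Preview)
Your proof is correct and rests on the same geometric reduction as the paper's: blow down the three symplectic exceptional spheres in $N_{sy}$ and observe that $V_X$ descends to a symplectic $(-1)$-sphere. The paper carries this out directly---it blows down the three spheres, notes that the resulting exceptional sphere cannot intersect any other exceptional class (Lemma~\ref{inters}(2)), rules out exceptional classes $E$ with $E\cdot[V_X]\ge 2$ on those grounds, and then blows down the image of $V_X$ to reach $X_m$; this yields $X=X_m\#4\overline{\mathbb{CP}^2}$ and hence that $V_X$ is artificial, all in one pass. You instead package step~(a) as a contradiction, invoking Cor.~\ref{n=n} and Lemma~\ref{n=t} to pin down $X=X_m\#3\overline{\mathbb{CP}^2}$ under the non-artificial hypothesis before blowing down; this is a slight detour through heavier machinery, but it is logically sound and arguably cleaner to read. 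Your step~(b) then recovers the count $X=X_m\#4\overline{\mathbb{CP}^2}$ from the definition of artificial plus relative minimality, which is exactly the content the paper extracts directly. Your step~(c) via Lemma~\ref{nge2} makes explicit what the paper leaves implicit. One small wording point in (b): when you pick an exceptional class $e$ in $\tilde X$ ``distinct from $\tilde E$'', you should say distinct from $\pm\tilde E$ (i.e.\ a different generator of the $\overline{\mathbb{CP}^2}$ summands), since $-\tilde E$ is also in $\mathcal E$; the argument is unaffected.
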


\begin{proof}
$X$ contains a symplectic $-4$-sphere which meets each of the $n_{sy}$ exceptional spheres in a single point.    After blowing down the 3 exceptional spheres we have reduced $V_X$ to an exceptional sphere itself while not changing any other exceptional spheres meeting $V_X$ in more than 2 points.  As $\kappa(X)\ge 0$ and hence no two distinct exceptional spheres intersect (Lemma \ref{inters}(2)), this implies there exist no exceptional curves $E$ with $[E]\cdot [V_X]\ge 2$.  Moreover, we can generically exclude the existence of any spheres of self-intersection $\le -2$.  Thus no new exceptional spheres can be produced in the blow-down with the exception of the single sphere produced from $V_X$.  Hence after blowing down the remnant of $V_X$ we obtain a minimal manifold $X_m$.
\end{proof}

\begin{remark}
Lemma \ref{n=t} proves $n_{sm}<4$ unless $V_X$ is artificial.  Lemma \ref{n3} together with Cor. \ref{n=n} then show, that the case $n_{sm}=3$ does not occur for a manifold with $\kappa(X)\ge 0$.

\end{remark}

\section{\label{rat}$-4$-blow-down of Rational Manifolds}

Cor. \ref{irrat} addressed the issue of $-4$-blow-downs in irrational ruled manifolds.  In the rational case, the behavior is more complex and provides interesting results.
 
Assume $X$ is a rational manifold.  Consider first a $-4$-blow-down producing a manifold $M$ with $\kappa(M)=-\infty$.  As $b_1$ is unchanged under $-4$-blow-downs, we find that $M$ must be rational again.  This is of course clear if $n_{sm}>0$.

However, a note of caution: The smooth results from Lemma \ref{flip} rely on the blowing down of an exceptional curve $E$ with  $[E]\in N_{sm}$ and this action affects all exceptional curves meeting the curve $E$.  If $X$ is not rational or ruled, then Lemma \ref{inters} ensures that all elements in either $N_{sm}$ or $N_{sy}$ are orthogonal while in the rational case this is no longer guaranteed.  Hence, even if $n_{sm}\ge 2$, the resulting manifold could be minimal.

If $n_{sm}=0$, then the $-4$-blow-down manifold $M$ will be minimal and thus $M=\mathbb CP^2$ or $S^2\times S^2$.  Let $X=X_m\#k\overline{\mathbb CP^2}$ be a manifold with $\kappa(X)=-\infty$.  Assume that $V_X\subset X$ is a symplectic $-4$-sphere and $(X,V_X)$ is relatively minimal.  Then 
\begin{equation}\label{kred}
K_M^2=K_{X_m}^2-k+1.
\end{equation}
Hence, if $M=\mathbb CP^2$, we have $K^2_{X_m}=8+k$, hence $k=0$ or $1$.  The manifold $S^2\times S^2$ ($k=0$) contains a $-4$-sphere, but this is a section of a sphere bundle, hence the $-4$-blow-down along it leaves the diffeomorphism type unchanged.  Thus $X$ cannot be $S^2\times S^2$.  Moreover, $X=\mathbb CP^2\#\overline{\mathbb CP^2}$ ($k=1)$ does not contain such a sphere.  Thus $M=\mathbb CP^2$ is not possible.

If $M=S^2\times S^2$, then $K^2_{X_m}=7+k$, which leads to the three diffeomorphic manifolds $(S^2\times S^2)\#\overline{\mathbb CP^2}\cong(S^2\tilde\times S^2)\#\overline{\mathbb CP^2}=\mathbb CP^2\#2\overline{\mathbb CP^2}$.  Table \ref{orbits} shows that $\mathbb CP^2\#2\overline{\mathbb CP^2}$ contains no symplectic $-4$-sphere with $n_{sm}=0$.

\begin{lemma}
If $\kappa(M)=-\infty$, then $n_{sm}>0$ and $M$ is diffeomorphic to the smooth blow-down of $X$.
\end{lemma}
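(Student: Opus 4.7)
The strategy is to rule out $n_{sm}=0$ by contradiction, after which the diffeomorphism assertion drops out of the smooth blow-down lemmas.

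Since $X$ is rational (hence simply connected) and $-4$-blow-down preserves $b_1$, the hypothesis $\kappa(M)=-\infty$ forces $M$ to be rational. Suppose now $n_{sm}=0$. The first nonminimality bullet of Theorem \ref{minimal} is ruled out because $(X,V_X)$ is relatively minimal: any embedded symplectic $-1$-sphere disjoint from $V_X$ would, by positivity of intersections, satisfy $E\cdot[V_X]=0$. The second bullet requires at least two disjoint exceptional spheres of $X$ each meeting $V_X$ transversely in one positive point, i.e. two disjoint elements of $N_{sm}$. Both are excluded, so $M$ is minimal. Being rational and minimal, $M\cong\mathbb{CP}^2$ or $M\cong S^2\times S^2$.

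Plugging $K_M^2\in\{9,8\}$ into (\ref{kred}), $K_M^2=K_{X_m}^2-k+1$, together with $K_{X_m}^2\in\{8,9\}$, yields a short list of candidates for $X$. Using $S^2\times S^2\#\overline{\mathbb{CP}^2}\cong\mathbb{CP}^2\#2\overline{\mathbb{CP}^2}$ to collapse the redundancy, one finds $X\in\{S^2\times S^2,\;\mathbb{CP}^2\#\overline{\mathbb{CP}^2},\;\mathbb{CP}^2\#2\overline{\mathbb{CP}^2}\}$. Each is eliminated in the paragraphs immediately preceding the lemma: the only symplectic $-4$-sphere in $S^2\times S^2$ is a section of the ruling, along which $-4$-blow-down leaves the diffeomorphism type unchanged (so the resulting $M$ is not $\mathbb{CP}^2$); $\mathbb{CP}^2\#\overline{\mathbb{CP}^2}$ contains no embedded symplectic $-4$-sphere at all; and Table \ref{orbits} shows that every symplectic $-4$-sphere in $\mathbb{CP}^2\#2\overline{\mathbb{CP}^2}$ has $n_{sm}>0$. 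Each possibility contradicts $n_{sm}=0$, so $n_{sm}>0$.

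With $n_{sm}\ge 1$ in hand, choose any $E\in N_{sm}$ and invoke Lemma \ref{n=1} to conclude that $M$ is diffeomorphic to the smooth blow-down of $X$ along $E$. The only subtle point in the whole argument is checking that both nonminimality conditions of Theorem \ref{minimal} are simultaneously ruled out by $n_{sm}=0$ together with relative minimality; once that is in place, the rest is a finite check driven by (\ref{kred}) and the classification of symplectic $-4$-spheres in the small rational surfaces already assembled in Table \ref{orbits}.
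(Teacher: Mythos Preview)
Your argument mirrors the paper's own proof, which is not given in a separate proof environment but is precisely the two paragraphs immediately preceding the lemma statement. You reproduce that reasoning, adding the explicit appeal to Theorem~\ref{minimal} for minimality of $M$ and to Lemma~\ref{n=1} for the final diffeomorphism assertion; the overall structure---reduce to $M$ minimal rational, apply~(\ref{kred}), and eliminate the three candidate $X$'s---is identical.

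One caution on the elimination of $X=S^2\times S^2$, where both you and the paper are loose. Your parenthetical ``so the resulting $M$ is not $\mathbb{CP}^2$'' points in the wrong direction: read from the other side, the sum is $\mathbb{CP}^2$ summed with an $S^2$-bundle along a section, which by the Section~3.1 argument returns $\mathbb{CP}^2$, and Lemma~\ref{rbd} forces $K_M^2=9$ in any case. So this degenerate configuration actually \emph{does} yield $M\cong\mathbb{CP}^2$ with $n_{sm}=0$. The honest way to dispose of it is to recognise that it is the ``$Y$ an $S^2$-bundle, $V_Y$ a section'' case of Section~3.1 with the roles of $X$ and $Y$ interchanged, hence outside the scope of the genuine $-4$-blow-down under discussion---not that it produces $M\ne\mathbb{CP}^2$. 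The paper's phrasing has the same ambiguity; your added parenthetical just makes the mis-step explicit.
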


Now consider $-4$-blow-downs with $\kappa(M)\ge 0$.  Clearly $n_{sm}=0$ must hold.  Given a manifold with $\kappa(X)=-\infty$, the results in Lemma \ref{rbd} strictly limit the possible candidates for $X$ which might produce a manifold $M$ with $\kappa(M)\ge 0$.  Thus we need $K_{X_m}^2\ge -1$ in order to obtain a new manifold $M$ with $\kappa(M)\ge 0$, limiting our choices to $\mathbb CP^2\#k\overline{\mathbb CP^2}$ with $k\le 10$. ($(S^2\times T^2)\#k\overline{\mathbb CP^2}$ with $k=0,1$ contains no symplectic $-4$-spheres and any symplectic $-4$-sphere in $S^2\times S^2$ is a section, hence the $-4$-blow-down has $\kappa(M)=-\infty$.)
 
Assume that $k\le 9$.  For such small blow-ups of $\mathbb CP^2$ we will show that either $n_{sy}\ge 1$ or the class $\xi$ with $\xi^2=-4$ and representable by a smoothly embedded sphere is not symplectically representable as an embedded sphere.  Thus we will have proven

\begin{lemma}\label{rat1}
The $-4$-blow-down $M$ of a rational manifold $X$ has $\kappa(M)\le 1$.
\end{lemma}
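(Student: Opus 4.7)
The plan is to rule out $\kappa(M)=2$. Since $X$ is rational, $\kappa(X)=-\infty$, so Lemmas~\ref{n=1} and~\ref{nge2} imply that if $n_{sm}\ge 1$ then $M$ is diffeomorphic to a smooth blow-down of $X$ and is therefore still rational, forcing $\kappa(M)=-\infty$. I may therefore assume $n_{sm}=0$. Under this assumption Theorem~\ref{minimal} shows that $M$ is minimal, and chaining Corollary~\ref{mbd} with Lemma~\ref{rbd} yields $K_M^2=10-k$ when $X=\mathbb{CP}^2\#k\overline{\mathbb{CP}^2}$. The other rational families $S^2\times S^2$ and its blow-ups, together with $(S^2\times T^2)\#k\overline{\mathbb{CP}^2}$ for $k\le 1$, were already excluded in the paragraphs preceding the lemma. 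By Definition~\ref{symp Kod} the case $\kappa(M)=2$ requires $K_M^2>0$, so it suffices to treat $k\le 9$.

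For each such $k$ my goal is to show that no relatively minimal symplectic pair $(X,V_X)$ with $V_X$ a $-4$-sphere and $n_{sm}=0$ exists. Writing $[V_X]=aH-\sum_{i=1}^k b_i E_i$ in the standard basis and applying the adjunction formula to the genus-zero embedded symplectic sphere $V_X$ yields $\sum b_i=3a+2$, while $[V_X]^2=-4$ gives $\sum b_i^2=a^2+4$. Cauchy--Schwarz then produces $(3a+2)^2\le k(a^2+4)$, bounding $a$ in terms of $k$ and, after modding out by the symmetries generated by permutations of the $E_i$ together with Cremona transformations on triples $\{E_i,E_j,E_\ell\}$, leaving only finitely many candidate classes to check for each $k\le 9$.

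For every such candidate $[V_X]$ the plan is to exhibit an exceptional class $E\in H_2(X;\mathbb{Z})$ satisfying $E^2=-1$, $K_X\cdot E=-1$ and $E\cdot[V_X]=1$. By Lemma~\ref{inters} together with the results of \cite{TJL2}, $E$ or $-E$ is represented by an embedded symplectic exceptional sphere; since $E\cdot[V_X]>0$, positivity of intersections forces the symplectic representative to lie in the class $E$ itself, producing an element of $N_{sy}\subset N_{sm}$ and contradicting $n_{sm}=0$. When no such $E$ exists for a given $[V_X]$, I will instead show directly that the candidate class is not represented by any embedded symplectic sphere, reducing the matter to the finite list of exceptional orbits recorded in Table~\ref{orbits}.

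The main obstacle is this last step, since for a handful of candidate classes no companion exceptional class is available and the intersection-positivity argument is unavailable. For these residual cases a genuine non-representability statement must be established, which is not visible from lattice data alone and requires input from Gromov--Taubes invariants or from the wall-crossing analysis of symplectic forms on small blow-ups of $\mathbb{CP}^2$. Once this finite check is completed, the inequality $\kappa(M)\le 1$ follows for every rational $X$.
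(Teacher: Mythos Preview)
Your outline is essentially the paper's: reduce to $k\le 9$ via $K_M^2=10-k$, then run a finite check on $-4$-sphere classes showing each either has $n_{sy}\ge 1$ or is not symplectically representable. Two points need correction.

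First, you cannot invoke Lemma~\ref{inters}: its hypothesis is precisely that $X$ is \emph{not} rational or ruled. The paper sidesteps your positivity-of-intersections argument altogether by first applying the diffeomorphism of \cite{LL2}, \cite{LL3} carrying $K_\omega$ to $K_{st}=-3H+\sum e_i$; after this reduction the standard exceptional classes $e_i$, $H-e_i-e_j$, $2H-\sum_5 e_i$, \dots are known to be $\tilde\omega$-symplectic, and one simply exhibits one of these with intersection $1$ against the transformed class $\tilde\xi$. No argument about which of $\pm E$ is the symplectic representative is needed. (This same reduction is also what justifies your use of the adjunction identity $\sum b_i=3a+2$, which presupposes $K=K_{st}$.)

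Second, your ``main obstacle'' dissolves once the $K_{st}$ reduction is in place, and no Gromov--Taubes or wall-crossing input is required. The diffeomorphism carries $\xi$ to some $\tilde\xi$ in the same orbit under the Wall automorphism group, so the coordinates become $\tilde b_i=\pm b_i$ up to permutation. If some $b_i=1$ has $\tilde b_i=1$ then $e_i\cdot\tilde\xi=1$ already gives $n_{sy}\ge 1$; so the paper assumes $\tilde b_i=-1$ for all such entries and then simply checks whether any sign choice on the remaining coordinates yields the adjunction value $K_{st}\cdot\tilde\xi=2$. For most orbits in Table~\ref{orbits} none does, and the class is thereby ruled out as a symplectic sphere by pure lattice arithmetic. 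The few orbits surviving this test are handled exactly as in your plan, by exhibiting an explicit symplectic exceptional class with intersection $1$. The paper uses the orbit list from \cite{bltjl} directly rather than rederiving it via your Cauchy--Schwarz and Cremona reduction, but the two enumerations coincide.
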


This result will rely in large part on the results of Wall \cite{W1} which state that for such small blow-ups, the automorphism group on homology induced by the action of diffeomorphisms is precisely the automorphism group of the homology lattice which preserves the intersection form.  In particular, the automorphism group is generated by the trivial automorphisms, namely interchanging two exceptional curves and $e_i\rightarrow -e_i$, as well as the reflection across $[H]-e_1-e_2$ ($k=2$) or $[H]-e_1-e_2-e_3$ ($3\le k\le 9$).  Using this result, a complete listing of all orbits of classes $\xi$ with $\xi^2=-4$ and representable by smoothly embedded spheres was given in \cite{bltjl}.  Table \ref{orbits} lists representatives for each orbit in simplified form $(a,b_1,...,b_n)$ which denotes the class $\xi=a[H]-\sum_{i=1}^kb_ie_i$ where $e_i$ are the generators of the homology of each $\overline{\mathbb CP^2}$ summand in the representation $X=\mathbb CP^2\#k\overline{\mathbb CP^2}$ and $a\ge 0$, $b_1\ge b_2\ge...\ge b_k$ and 
\begin{eqnarray}
\nonumber 2a\le b_1+b_2&k=2,\\
\nonumber 3a\le b_1+b_2+b_3&k\ge3,  
\end{eqnarray} 
see \cite{bltjl} for details.

As the intersection form is preserved and our interest is directed towards relatively minimal pairs, we may drop all $b_i=0$ and consider the hypersurfaces representing $\xi$ to lie in the manifold obtained by blowing down these $e_i$. 

Assume that $X$ carries a symplectic structure $\omega$ with canonical class $K$ making $V$ symplectic.  Prop. 3.9, \cite{LL3}, states, that diffeomorphisms act transitively on the set of symplectic canonical classes.  Hence there exists a diffeomorphism $\phi:(X,V,\omega,K)\rightarrow (\tilde X,\tilde V,\tilde\omega,K_{st})$ with $K_{st}=-3[H]+\sum \tilde e_i$.  The exceptional spheres $\tilde e_i$ are all symplectically representable in $\tilde X$ with respect to $\tilde \omega$, see Thm. A, \cite{LL3}.  In particular, the classes $e_i$ get mapped to $\tilde e_i$.  However, up to a permutation of the indices, $\tilde e_i=\pm e_i$.  More precisely, Theorem 1, \cite{TJL2} provides an orientation-preserving diffeomorphism $\phi_i:(X,V,\omega)\rightarrow (\tilde X,\tilde V,\tilde\omega)$ such that $e_i$ is $\mathbb Z$-homologous to a symplectic exceptional sphere, up to sign.  Prop 3.6, \cite{LL2} shows that using this procedure a diffeomorphism $\phi$ can be constructed such that the canonical class associated to $\omega$ gets mapped to the class $\tilde K=\pm3H\pm e_1\pm...\pm e_k$.  Using trivial automorphisms, Thm. 1, \cite{LL2}, shows that we can find an orientation preserving diffeomorphism mapping $K$ to $K_{st}$ as described above.

Under this diffeomorphism, $V$ is mapped to $\tilde V$ and the $-4$-blow-downs of $(X,V,\omega)$ and $(\tilde X,\tilde V,\tilde \omega)$ will be diffeomorphic, hence have the same Kodaira dimension.  Thus it suffices to understand the structures for the pair $(\tilde X,\tilde V)$.

Consider any class in Table \ref{orbits} which contains an entry $b_i=1$.  This indicates that $e_i\cdot \xi=1$.  However, assuming that $\xi$ is symplectically representable, this does not yet suffice to ensure $n_{sm}>0$ as we do not know that $e_i$ is symplectically representable nor do we know the intersection pattern of the smooth sphere representing $e_i$ and the hypersurface representing $\xi$.  Were this the case, i.e. $\tilde e_i=e_i$ corresponding to $b_i=1\mapsto \tilde b_i=1$, we would have $n_{sy}\ge 1$ and thus, under the assumption that $\xi$ is symplectically representable,  $\kappa(M)=\kappa(X)$.  

In any example in Table \ref{orbits} with $b_i=1$, we shall therefore assume from now on that this is not the case, i.e. $b_i=1\mapsto \tilde b_i=-1$.  This means, using the above diffeomorphism, we obtain $\tilde e_i=-e_i$ is symplectically representable and thus $\tilde b_i=\tilde e_i\cdot[\tilde V]=-e_i\cdot [\tilde V]=-1$.  Hence we may write $\tilde\xi$ as $(\pm a,\pm b_1,...,\pm b_j, -1,..,-1)$.  From this we can calculate all possible values of $ K_{st}\cdot\tilde\xi$, they lie in the set of numbers $\{\pm 3a\pm b_1\pm...\pm b_j-1-....-1\}$.  If $\xi$ is to be symplectically representable, then this set must contain $2$, as follows from the adjunction equality.

\begin{example}\label{large}Consider the class $\xi=(5,4,2,2,2,1)$ in Table \ref{orbits}.  The possible values of $ K_{st}\cdot \tilde\xi$ under the above assumptions are $\{\pm 15\pm4\pm2\pm2\pm2-1\}$, which does not contain $2$.  Hence this class is not symplectically representable.

Two basic situations can appear:  The first term is too large in comparison to the rest of the terms.  For example, the leading term in $(9,8,2,2,2,2,2,1)$ leads to $\pm 27$, which cannot be adequately reduced by the remaining terms. 

In the second case, the possible values of $ K_{st}\cdot \tilde\xi$ contains 0 and all further positive values occur in multiples of $4$.  This is due to the occurrence of only $-1$ and $\pm 2$ as values for $b_i$.  For example, the possible values of $ K_{st}\cdot \tilde\xi$ for the class $(2,2,1,1,1,1)$  are $\{-12,-8,0,4\}$ or for the class $(3,2,2,1,1,1,1,1)$ the values $\{-18,-14,-10,0,4,8\}$.

This same phenomenon can be observed for a number of classes in Table \ref{orbits}.
\end{example}

\begin{example}\label{9}
The lone example for $k=9$, namely $(5,3,2,2,2,2,1,1,1,1)$ is not symplectically representable.  Calculating the possible values for $ K_{st}\cdot \tilde\xi$ leads to the values $\{...-10,0,4,6,10...\}$ around the needed value $2$. 

\end{example}

In other cases we obtain multiples of an exceptional sphere;
\begin{example}\label{mult}Consider the class $(4,2,2,2,2,2)$.  The value $ K_{st}\cdot \tilde\xi=2$ is obtained for $\tilde \xi=-4[H]+2\sum_{i=1}^5e_i=-2(2[H]-\sum_{i=1}^5e_i)$.  Hence $\tilde \xi$ is a negative multiple of a symplectic exceptional sphere and itself cannot be symplectically representable.

The same holds for $(6,4,2,2,2,2,2,2)$.

\end{example}

We are not interested in any classes with $n_{sm}>0$.  Assuming that the classes $\xi$ are symplectically representable, we can use the above constructions to pick out a configuration with $K_{st}\cdot \tilde \xi =2$ and consider the value of $n_{sy}$.

\begin{example}\label{nsy}
Consider the class $\xi=(3,2,2,2,1)$.  The possible set of values for $K_{st}\cdot \tilde \xi$ is given by $\{\pm9\pm2\pm2\pm2-1\}$ which contains $2$ given by $9-2-2-2-1$.  This corresponds to the class $\tilde \xi=-3[H]+2\tilde e_1+2\tilde e_2+2\tilde e_3+\tilde e_4$.  With $\tilde e_i$ symplectically representable we also obtain the symplectic exceptional spheres $\tilde e_{ij}=[H]-\tilde e_i-\tilde e_j$.  However, $\tilde e_{12}\cdot \tilde \xi=1$, hence $n_{sm}\ge n_{sy}\ge 1$.  Thus the class $\xi$, assuming it is symplectically representable, will lead to $\kappa(M)=\kappa(X)=-\infty$.

Again, this holds true for a number of examples in Table \ref{orbits}, keeping in mind that as $k$ increases, we obtain exceptional spheres of the form $e_i$, $[H]-e_i-e_j$, $2[H]-\sum_5e_i$, $3[H]-2e_i-\sum_6e_j$ and more complicated sums, see \cite{bltjl2}.

\end{example}

In the previous examples we do not know which classes are symplectically representable, even if we drop the condition on the exceptional classes $e_i$. 

\begin{example}\label{sym}
For some of the classes the diffeomorphism leads to classes which we know to be symplectically representable, however only with $b_i=1\mapsto \tilde b_i=1$.  For example, the class $(1,2,1)$ can be mapped to the class $(-1,-2,1)$.  There exists a symplectic $\tilde \omega$ such that this class is symplectically representable:  $[\tilde V]=(-1,-2,1)$ is represented by the blow-up of an embedded symplectic $-3$-sphere in $\mathbb CP^2\#\overline{\mathbb CP^2}$.

Similar results hold for the classes $(0,1,1,1,1)$ (This is an artificial $-4$-sphere.), $(1,1,1,1,1,1)$ and $(2,1,1,1,1,1,1,1,1)$, see the examples in \cite{D}.

Note that for all of these examples we can find a symplectic exceptional sphere meeting $V$ in a single positive transverse point.  Hence $n_{sm}>0$ in all these examples and thus $\kappa(M)=\kappa(X)=-\infty$.
\end{example}

This completes the cases found in Table \ref{orbits} and proves Lemma \ref{rat1}. 

 We are left with embedded symplectic $-4$-spheres in $X=\mathbb CP^2\#10\overline{\mathbb CP^2}$.  In the examples found in \cite{D}, we have seen that the classes $-K$ and $-2K$ in $\mathbb CP^2\#9\overline{\mathbb CP^2}$ can be blown up in a single double point to obtain an embedded $-4$-sphere in $\mathbb CP^2\#10\overline{\mathbb CP^2}$.  These two cases will be discussed in the next section, the first leading to a logarithm two transform of $E(1)$ and hence to no change in Kodaira dimension, the second to the Enriques surface with $\kappa(M)=0$.

The possible existence of other classes $\xi$ with $\xi^2=-4$ in $\mathbb CP^2\#10\overline{\mathbb CP^2}$ admitting symplectic spherical representatives cannot be ruled out. However, we conjecture that no such curves exist, see Conjecture \ref{conj}.

\subsection{$\bf \kappa(X)=-\infty$ $\bf \Rightarrow$ $\bf \kappa(M)=0$\label{kod0}}

Which configurations produce a manifold $M$ with $\kappa(M)=0$?  Clearly, we must have $X=\mathbb CP^2\#10\overline{\mathbb CP^2}$.  The $-4$-blow-down of a symplectic pair $(X,V_X)$ would produce an Enriques surface, as $b_1$ and $b^+$ are unchanged.  A suitable $V_X$ must now be found.

To this end, we consider the following result, which follows from the proof of Thm. 3.1, \cite{U2}:

\begin{lemma}\label{kv}
Assume that $M=X\#_{V_X=2H}\mathbb CP^2$ is minimal and has Kodaira dimension 0.  Then $K_X=-\frac{1}{2}[V_X]$.
\end{lemma}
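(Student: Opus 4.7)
The plan is to extract information about $K_X$ from a single strong input about $K_M$. Minimality of $M$ together with $\kappa(M)=0$ forces the canonical class $K_M$ to be torsion in $H^2(M;\mathbb{Z})$; this is the content of the Kodaira-zero classification for symplectic four-manifolds (Li's analogues of the K3/Enriques/$T^2$-bundle picture), and it is strictly stronger than the numerical identities $K_M^2=0$ and $K_M\cdot\omega_M=0$ that come directly from Definition \ref{symp Kod}. In particular, $K_M\cdot A=0$ for every $A\in H_2(M;\mathbb{Q})$.

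The next step is to convert this vanishing into information about $K_X$ via the splitting formula of Lemma \ref{formulas}. Because $V\cong S^{2}$ has $H_{1}(V)=0$, no rim tori appear in the Mayer--Vietoris description of $H_2(M)$, so any class $A_X\in H_2(X;\mathbb{Q})$ with $A_X\cdot[V_X]=0$ lifts to a class $A\in H_2(M;\mathbb{Q})$ that decomposes as $(A_X,0)$. Plugging such an $A$ into Lemma \ref{formulas} and using $K_{M}\cdot A=0$ collapses the formula to
\[
0=K_{X}\cdot A_{X}+A_{X}\cdot[V_X]=K_{X}\cdot A_{X}.
\]
Hence $K_{X}$ annihilates the hyperplane $[V_X]^{\perp}\subset H_{2}(X;\mathbb{Q})$.

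Since $[V_X]^{2}=-4\neq 0$, the intersection form is nondegenerate on the line $\langle[V_X]\rangle$, so the annihilator of $[V_X]^{\perp}$ is precisely $\mathbb{Q}\cdot[V_X]$ and we may write $K_{X}=c\,[V_X]$ for a unique $c\in\mathbb{Q}$. To pin down $c$, pair both sides with $[V_X]$ and use the adjunction equality for the symplectic $(-4)$-sphere $V_X$, namely $K_{X}\cdot[V_X]+[V_X]^{2}=-2$, which yields $K_{X}\cdot[V_X]=2$. Therefore $-4c=2$, giving $c=-\tfrac{1}{2}$ and the desired identity $K_{X}=-\tfrac{1}{2}[V_X]$ in $H^{2}(X;\mathbb{Q})$.

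The principal obstacle is the first step: the rational triviality of $K_{M}$ is not a formal consequence of the numerical Kodaira-dimension-zero conditions, and must be imported from the structure theory of minimal symplectic four-manifolds with $\kappa=0$ (as in the argument underlying Thm.~3.1 of \cite{U2}). Once that is in hand, the remainder is a linear-algebraic manipulation of the splitting formula plus adjunction, and in particular it uses neither the symplectic form $\omega_X$ nor any information beyond the homological data of the pair $(X,V_X)$.
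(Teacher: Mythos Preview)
Your argument is correct. Both your route and the paper's rest on the same structural input---that $K_M$ is rationally trivial for a minimal symplectic four-manifold with $\kappa=0$---and both conclude that $K_X$ is a rational multiple of $[V_X]$; indeed your Steps~1--3 are essentially a self-contained specialization of the argument the paper imports from the proof of Thm.~3.1 in \cite{U2}.

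Where the two diverge is in fixing the coefficient. The paper follows Usher's framework on the $\mathbb{CP}^2$ side as well, writing $K_{\mathbb{CP}^2}=\mu_2\cdot 2[H]$ with $\mu_2=-\tfrac{3}{2}$, and then uses the symplectic-area identity from Lemma~\ref{symp},
\[
(2+\mu_1+\mu_2)\int_{V_X}\omega_X=0,
\]
to solve $\mu_1=-\tfrac{1}{2}$. You instead pair $K_X=c[V_X]$ with $[V_X]$ and invoke adjunction for the $(-4)$-sphere to get $-4c=2$. Your determination is purely homological and never touches $\omega_X$ or the $\mathbb{CP}^2$ summand, which is a small but genuine simplification; the paper's route, on the other hand, is a direct specialization of Usher's general machinery and makes transparent how the two halves of the sum interact via the area constraint. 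Either way the conclusion is the same.
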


\begin{proof}
The proof of Thm. 3.1, \cite{U2}, shows that $K_X=\mu_1[V_X]$ and $K_{\mathbb CP^2}=\mu_22[H]$ if $M$ is minimal with Kodaira dimension 0.  Moreover, 
\[
(2+\mu_1+\mu_2)\int_{V_X}\omega_X=0
\]
which implies $\mu_1+\mu_2=-2$.  As $\mu_2=-\frac{3}{2}$, it follows that $\mu_1=-\frac{1}{2}$.
 
\end{proof}

This implies that $n_{sm}=0$ as $[V_X]\cdot E=-2K_X\cdot E\ne 1$ for any smooth exceptional classes $E$.  Moreover, given any symplectic exceptional class $E$ we obtain that $[V_X]\cdot E=2$.

We now determine the class of $V_X$ explicitly:  Let $X=\mathbb CP^2\#10\;\overline{\mathbb CP^2}$ be endowed with a symplectic form $\omega$.  Associate to $\omega$ the symplectic canonical class $K_\omega$.  Theorem 1, \cite{LL2}, provides an orientation-preserving diffeomorphism $\phi:X\rightarrow X$ such that $\phi^*K_\omega=K_{st}=-3[H]+\sum e_i$.  Moreover, Theorem D, \cite{LL3}, states that the symplectic form $\phi^*\omega$ is equivalent to $\omega_{st}$ which is obtained from the symplectic form on $\mathbb CP^2$ through symplectic blow-ups.  Equivalent means up to deformation and diffeomorphisms.  Hence, given the standard symplectic structure on $X$ with canonical class $K_{st}=-3[H]+\sum e_i$ we obtain $[V_X]=6[H]-2\sum_{i=1}^{10}e_i$.  This is precisely one of the examples noted in \cite{D}.

Recall the fact that this class is symplectically representable.  This can be seen as follows:  The class $6[H]$ in $\mathbb CP^2$ admits a non-vanishing Gromov-Witten invariant in genus $g=0$, a curve representing this class is an immersed sphere with 10 double points.  Blowing up each of these double points provides for the class $-2e_i$ while removing the double point.  The final result is an embedded sphere in class $6[H]-2\sum_{i=1}^{10}e_i$, which is then the symplectic $-4$-sphere along which we blow-down.  Thus we have proven the following Lemma:

\begin{lemma}
Let $X=X_m\#k\overline{\mathbb CP^2}$ be a manifold with $\kappa(X)=-\infty$.  Assume that $V_X\subset X$ is a symplectic $-4$-sphere and $(X,V_X)$ is relatively minimal.  Then the $-4$-blow-down $M$ along $V_X$ has $\kappa(M)\ge 0$ only if $X=\mathbb CP^2\#10\overline{\mathbb CP^2}$.  Moreover, if $[V_X]=6[H]-2\sum_{i=1}^{10}e_i$, we can obtain a symplectic manifold $M$ with  $\kappa(M)=0$.

\end{lemma}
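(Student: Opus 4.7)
The plan is to handle the two assertions separately, leveraging the case analysis carried out earlier in this section. For the necessity direction, $\kappa(X)=-\infty$ together with Theorem \ref{2.4}(4) puts $X$ in the rational-or-ruled category; Corollary \ref{irrat} eliminates the irrationally ruled case, and the discussion preceding the lemma handles $S^2\times S^2$ (every symplectic $-4$-sphere there is a section, so the $-4$-blow-down preserves $\kappa=-\infty$) and $S^2\times T^2$ (small blow-ups carry no symplectic $-4$-sphere). Thus $X=\mathbb{CP}^2\#k\overline{\mathbb{CP}^2}$, and Lemma \ref{rbd} gives $K_M^2=K_X^2+1=10-k$, forcing $k\leq 10$ from $\kappa(M)\geq 0$. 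The remaining cases $1\leq k\leq 9$ are excluded by Lemma \ref{rat1} and the orbit-by-orbit check in Examples \ref{large}--\ref{sym}: for each representative in Table \ref{orbits} either the class is not symplectically representable (ruled out by the adjunction obstruction $K_{st}\cdot\tilde\xi=2$), or it is a negative multiple of an exceptional class, or $n_{sm}>0$ so that the $-4$-blow-down returns $\kappa(M)=\kappa(X)=-\infty$. Only $k=10$ survives.

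For sufficiency I would appeal to Lemma \ref{kv}, which under the target conclusion $\kappa(M)=0$ and minimality of $M$ gives $K_X=-\tfrac{1}{2}[V_X]$. This in turn guarantees $n_{sm}=0$, since any symplectic exceptional class $E$ satisfies $E\cdot[V_X]=-2K_X\cdot E=2\neq 1$; Theorem \ref{minimal}(1) then ensures $M$ is minimal. To pin down the class of $V_X$ explicitly I would apply the diffeomorphism of \cite{LL2,LL3} normalizing the canonical class to $K_{st}=-3[H]+\sum_{i=1}^{10} e_i$, so that $K_X=-\tfrac{1}{2}[V_X]$ forces $[V_X]=6[H]-2\sum_{i=1}^{10} e_i$. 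To realize this class by an embedded symplectic sphere I would start with the class $6[H]\in H_2(\mathbb{CP}^2)$, whose nonzero genus zero Gromov--Witten invariant yields an immersed symplectic sphere with exactly $10$ transverse double points; a symplectic blow-up at each node contributes $-2e_i$ to the homology class and resolves the double point, leaving an embedded symplectic sphere of self-intersection $36-40=-4$ in the desired class. Finally Lemma \ref{rbd} verifies $K_M^2=K_X^2+1=0$ and $K_M\cdot\omega_M=K_X\cdot\omega_X+\tfrac{1}{2}[V_X]\cdot\omega_X=0$; combined with $b_1(M)=0$ and $b^+(M)=1$ this delivers $\kappa(M)=0$.

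The main obstacle is the orbit-by-orbit verification underpinning Lemma \ref{rat1}: every representative in Table \ref{orbits} must be inspected individually and shown to satisfy one of the three exclusion mechanisms above, and the Wall-type diffeomorphism standardization from \cite{LL2,LL3} has to be invoked carefully to allow the reduction to $K_{st}$. This is the technical heart of the section and the step I would budget the most effort for. A secondary, more routine subtlety is confirming that the sphere obtained from the Gromov--Witten curve in $6[H]$ remains symplectic and embedded after the ten symplectic blow-ups at its nodes; this follows from the standard compatibility of symplectic blow-up with resolving transverse double points of immersed symplectic spheres.
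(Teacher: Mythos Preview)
Your proposal follows the paper's own argument closely: the necessity direction is exactly the case analysis of Section~\ref{rat} (elimination of the irrationally ruled case via Cor.~\ref{irrat}, the bound $K_M^2=10-k\ge 0$ forcing $k\le 10$, and the orbit-by-orbit check of Table~\ref{orbits} for $k\le 9$), and the sufficiency direction mirrors the paper's use of Lemma~\ref{kv}, the standardization to $K_{st}$ via \cite{LL2,LL3}, and the Gromov--Witten realization of the degree-$6$ nodal rational curve.

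One small logical wrinkle: in your sufficiency paragraph you invoke Lemma~\ref{kv}, whose hypothesis is $\kappa(M)=0$, before that has been established, and you then ``pin down'' the class $[V_X]$ even though it is already given in the hypothesis. This is harmless, since the identity $K_X=-\tfrac12[V_X]$ follows directly from $[V_X]=6[H]-2\sum e_i$ and $K_{st}=-3[H]+\sum e_i$ without Lemma~\ref{kv}, and your parity argument for $n_{sm}=0$ and the final computation via Lemma~\ref{rbd} are independent of it. That final numerical verification ($K_M^2=0$, $K_M\cdot\omega_M=0$, $M$ minimal) is in fact a mild sharpening of the paper's presentation: the paper does not carry out this computation here but instead defers the conclusion $\kappa(M)=0$ to the following subsection, where $M$ is identified with the Enriques surface $E(1)_{2,2}$.
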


We now consider the latter case in detail.

\subsection{The $-4$-blow-down of the pair $X=\mathbb CP^2\#10\overline{\mathbb CP^2}$ and $[V_X]=6[H]-2\sum_{i=1}^{10}e_i$}

The goal of this section is to show that the pairs $(X,V_X)=(\mathbb CP^2\#10\overline{\mathbb CP^2}, V_X)$ with $[V_X]=6[H]-2\sum_{i=1}^{10}e_i$ and $(E(1)_2\#\overline{\mathbb CP^2},V_2)$ with $[V_2]=f-2e_b$ are symplectomorphic (here $E(1)_2$ denotes a logarithmic transform of order 2 of $E(1)$).  It then follows, that the $-4$-blow-down of $(X,V_X)$ has the diffeomorphism type of the Enriques surface.

Note that an Enriques surface is a two-fold logarithmic transform of $E(1)$ of order 2, see \cite{GS}.  It can be produced from $E(1)$ with two $-4$-blow-downs as follows:
\[
\begin{diagram}
\node{E(1)}\arrow{e,t}{blow-up}\node{E(1)\#\overline{\mathbb CP^2}}\arrow{e,tb}{-4}{ blow-down}\node{E(1)_2}\arrow{e,t}{blow-up}\node{E(1)_2\#\overline{\mathbb CP^2}}\arrow{s,lr}{-4}{ blow-down}\\
\node[4]{E(1)_{2,2}}
\end{diagram}
\]

 The logarithmic transformation $E(1)_2$ is just the $-4$-blow-down of the $-4$-sphere $F-2E_b$ in $E(1)\#\overline{\mathbb CP^2}$ (Note that $F=-K$ in $E(1)$, recall the discussion in the previous section.).  To produce an Enriques surface, we now need to blow up a cusp-fiber $f$ in $E(1)_2$ at the nodal point to produce an embedded $-4$-sphere $V_2$ in the class $f-2e_b$ in $E(1)_2\#\overline{\mathbb CP^2}$, noting that $n_{sy}=0$ for any $-4$-sphere in this class.  This we rationally blow-down to produce the Enriques surface $E(1)_{2,2}$.
 
 Due to  Lemma \ref{kv}, we know that the class $[V_2]$ of this $-4$-sphere  is $-2K_{E(1)_2\#\overline{\mathbb CP^2}}$.  This can be seen directly:  The class $f-2e_b$ can be written as $2(f_2-e_b)$ which is $-2K_{E(1)_2\#\overline{\mathbb CP^2}}$, where $f_2$ denotes the class of the new multiple fiber.

Clearly, the underlying smooth manifolds $X$ and $E(1)_2\#\overline{\mathbb CP^2}$ are diffeomorphic:  Any logarithmic transform of $E(1)$ is diffeomorphic to $E(1)$ (Thm 8.3.11, \cite{GS}) and hence to $\mathbb CP^2\#9\overline{\mathbb CP^2}$.

Using these diffeomorphisms (after symplectically blowing up a point on the respective cusps) we can pull-back $\omega_{st}$ from $X$ to $E(1)_2\#\overline{\mathbb CP^2}$ to obtain a symplectic form $\tilde\omega$.  Now apply Theorem 1, \cite{LL2}, and Theorem D, \cite{LL3}, as before to obtain the necessary diffeomorphism taking $\tilde\omega$ to a symplectic form $\omega$ with canonical class $K_{E(1)_2\#\overline{\mathbb CP^2}}=-\frac{1}{2}[V_2]=f_2-e_b$.  This constructs a symplectomorphism of pairs from $(X,V_X,\omega_{st})$ to   $(E(1)_2\#\overline{\mathbb CP^2},V_2,\omega)$

%
%
%
%

The diffeomorphism between $E(1)_2\#\overline{\mathbb CP^2}$ and $X$ ensures that the $-4$-blow-downs of the respective $-4$-spheres will produce manifolds of the same diffeomorphism type.

\section{$-4$-blow-downs from $X$ with $\kappa(X)\ge 0$}

The results of Section \ref{kodsection} allow us to analyse the structure of $M$.  We assume that $V_X$ is not artificial.  Due to Cor. \ref{n=n}, we write $n=n_{sm}=n_{sy}$ in the following.

\begin{lemma}\label{k=0}Assume $\kappa(X)=0$.  Then the following holds:
\begin{enumerate}
\item If $n=0$, then $X=X_m\#\overline{\mathbb CP^2}$ and $\kappa(M)=1$.  Furthermore, $M$ is an order 2 logarithmic transform of $X_m$.
\item If $n>0$ then $n=2$.  Moreover, $\kappa(M)=0$ and $M$ is diffeomorphic to  $X_m\#\overline{\mathbb CP^2}$ with $X_m$  either a K3 surface, an Enriques surface or an unknown surface with $\kappa(X)=0$.  There exist no non-artificial examples with $n\ne 2$.
\end{enumerate}
In particular, $\kappa(M)\le 1=\kappa(X)+1$.
\end{lemma}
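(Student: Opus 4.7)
The plan is to treat the two cases $n=0$ and $n>0$ separately. In each case the structural results Lemma \ref{n=t}, Lemma \ref{n3}, Cor. \ref{n2blow} pin down the diffeomorphism type of $X$ and of $M$, and the numerical invariants $K^2$, $K\cdot\omega$ come out of the blow-up/blow-down formulas in Lemma \ref{rbd} and Cor. \ref{mbd}. The Kodaira dimension is then read off from Def. \ref{symp Kod}, or from diffeomorphism invariance (Thm. \ref{2.4}).

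For $n=0$, my first move is to rule out the minimal case $X=X_m$: any minimal symplectic $4$-manifold with $\kappa=0$ has torsion canonical class, so $K_{X_m}\cdot A=0$ for every $A\in H_2$, yet adjunction on the $-4$-sphere $V_X$ forces $K_X\cdot [V_X]=2$. Lemma \ref{n=t} therefore leaves $X=X_m\#\overline{\mathbb CP^2}$, and Cor. \ref{mbd} gives $K_X^2=-1$. Under $n_{sm}=n_{sy}=0$ the non-minimality clauses of Thm. \ref{minimal} all fail---the single exceptional class of $X$ has $E\cdot [V_X]=2>0$ and so is not disjoint from $V_X$, and the second clause needs $n\ge 2$---so $M$ is minimal. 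Lemma \ref{rbd} together with Cor. \ref{mbd} then give $K_M^2=0$ and
\[
K_M\cdot\omega_M=\omega_X\cdot E+\tfrac{1}{2}\omega_X\cdot [V_X]>0,
\]
so $\kappa(M)=1$. The log-transform identification follows because $K_{X_m}\cdot A=0$ for all $A$ and $K_X\cdot[V_X]=2$ force $[V_X]=f-2E$ with $f^2=0$ and $K_{X_m}\cdot f=0$, which is precisely the homological signature of an order-$2$ log transform of $X_m$ (compare the $E(1)\to E(1)_2$ construction in Section \ref{rat}).

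For $n>0$, Lemma \ref{n=t} gives $n\in\{1,2,3\}$, and Lemma \ref{n3} together with the standing non-artificiality assumption rules out $n=3$. The key step is eliminating $n=1$. I would argue by contradiction: if $n=1$ then Cor. \ref{n2blow} identifies $M\cong X_m$, so $M$ is itself minimal with $\kappa(M)=\kappa(X_m)=0$, forcing $K_M\cdot\omega_M=0$; but Lemma \ref{rbd} with Cor. \ref{mbd} gives
\[
K_M\cdot\omega_M=\omega_X\cdot E_1+\tfrac{1}{2}\omega_X\cdot [V_X]>0,
\]
a contradiction. Hence $n=2$, Cor. \ref{n2blow} yields $M\cong X_m\#\overline{\mathbb CP^2}$, and $\kappa(M)=\kappa(X_m)=0$ by diffeomorphism invariance (Thm. \ref{2.4}). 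The list of possible $X_m$ is then extracted from the current classification of known minimal symplectic $4$-manifolds of Kodaira dimension $0$.

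Both cases yield $\kappa(M)\le 1=\kappa(X)+1$. The main obstacle I foresee is the log-transform identification in the $n=0$ case: the homological form $[V_X]=f-2E$ is easy to read off, but showing that the $-4$-blow-down is, smoothly, a genuine order-$2$ log transform of $X_m$ requires matching this class to the concrete local model of a sphere obtained from blowing up a node of a singular fiber, which presupposes that $f$ is realized as a (multi-)fiber. Everything else reduces to routine bookkeeping with the intersection formulas from Section \ref{pre}.
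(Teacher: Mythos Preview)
Your argument is correct and follows essentially the paper's structure, with two minor divergences worth noting. First, to eliminate $n=1$ the paper uses adjunction directly: since $\kappa(X_m)=0$ the canonical class $K_{X_m}$ is torsion, so $K_X\cdot[V_X]=E_1\cdot[V_X]=1$, contradicting $K_X\cdot[V_X]=2$; your route through Cor.~\ref{n2blow} and the contradiction $K_M\cdot\omega_M>0$ versus $\kappa(M)=0$ also works but is less direct. Second, in the $n=2$ case the paper does not merely appeal to the classification of $\kappa=0$ manifolds but extracts a concrete constraint: blowing down the two exceptional curves turns $V_X$ into an embedded $-2$-sphere in $X_m$, and among the known minimal $\kappa=0$ manifolds only the K3 and Enriques surfaces contain such a sphere. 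Your concern about the log-transform identification in the $n=0$ case is well placed; the paper handles it by observing that $E\cdot[V_X]=2$ identifies $V_X$ with the blow-up of a cusp fiber at its node and then citing Fintushel--Stern, so it does not resolve the geometric realization issue beyond that citation either.
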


\begin{proof}Assume $n=0$.  By Lemma \ref{k=t}, we must have $k=0$ or $1$.  No minimal manifold with Kodaira dimension 0 contains a symplectic $-4$-sphere.    Thus we have no examples with $k=0$.   If $k=1$, then the adjunction formula implies that $E\cdot [V_X]=2$, hence $V_X$ is obtained from the blow-up of a cusp fiber at a nodal point.  Lemma \ref{Kw} shows that $K\cdot\omega$ stays positive, while the blow-up and subsequent $-4$-blow-down leaves $K^2$ unchanged.  Hence  $\kappa(M)=1$.  Ex. 1 in Section 3 of \cite{FS} describes the structure of $M$ as a logarithmic transform of $X$.

Assume that $n\ge 1$.  $n=1$ is excluded by the adjunction formula  and $n\ge 4$ by Lemma \ref{n4}.  This leaves $n=2$, which, by Lemma \ref{n=t}, leads to $n=k=2$.  In particular, the minimal manifold $X_m$ must contain an embedded $-2$-sphere.  Of the known manifolds with $\kappa(X)=0$, only the K3 surface and the Enriques surface admit such hypersurfaces.  Moreover,  $M$ is diffeomorphic to  $X_m\#\overline{\mathbb CP^2}$ by Cor. \ref{n2blow}.   
\end{proof}

To all of the cases above there exist examples:  If $n=0$, then the blow up of a cusp fiber ensures that examples exist of this type.  For $n=2$, see \cite{D}.

%
%
%

In higher Kodaira dimensions we cannot make such explicit statements on the structure of $M$, however we can explicitly determine the Kodaira dimension based on $n$ and $k$:

\begin{lemma}
Assume $n=0$ and $\kappa(X)=1$.  Then the following holds:
\begin{enumerate}
\item If $k=0$, i.e. $X$ is minimal, then $\kappa(M)=2$.
\item If $k=1$, then $\kappa(M)=1$.
\end{enumerate}

\begin{proof}
The result follows from Lemma \ref{rbd} as well as Thm. \ref{main}:
\[
K_{M_m}^2=K_X^2+1=\left\{
\begin{array}{cc}
K^2_{X_m}+1&k=0\\K_{X_m}^2&k=1
\end{array}\right.\:=\:\left\{
\begin{array}{cc}
1&k=0\\0&k=1
\end{array}\right.
\]

\end{proof}

\end{lemma}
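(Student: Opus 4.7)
The plan is to use Lemma~\ref{n=t} to pin down the topology of $X$, to verify that $M$ is already minimal so that its Kodaira dimension can be read off $K_M$ directly, and then to extract the numerical value from the blow-up and $-4$-blow-down formulas.

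First I would apply Lemma~\ref{n=t}. By hypothesis $(X,V_X)$ is relatively minimal with $\kappa(X)=1\ge 0$, and since $n_{sm}=0<4$ the sphere $V_X$ is not artificial (artificial $-4$-spheres have $n_{sm}\ge 4$). The lemma therefore forces $X=X_m$ or $X=X_m\#\overline{\mathbb{CP}^2}$, matching exactly the two cases of the statement. Next I would check minimality of $M$ against Thm.~\ref{minimal}: relative minimality of $(X,V_X)$ rules out exceptional curves in $X\setminus V_X$; $Y=\mathbb{CP}^2$ contains no exceptional spheres at all; the second bullet of Thm.~\ref{minimal}(1) requires $n_{sm}\ge 2$ and so is not triggered; and $(Y,V_Y)=(\mathbb{CP}^2,2H)$ is not of sphere-bundle type, so Thm.~\ref{minimal}(2) does not apply. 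Thus Thm.~\ref{minimal}(3) applies and $M$ is minimal, so $K_{M_m}=K_M$ and the Kodaira dimension of $M$ is determined by the pair $(K_M^2,\,K_M\cdot\omega_M)$.

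With $M$ minimal the computation is elementary. Minimality of $X_m$ together with $\kappa(X_m)=1$ gives $K_{X_m}^2=0$ by Definition~\ref{symp Kod}. Iterating Lemma~\ref{bd} (equivalently, Cor.~\ref{mbd}) yields $K_X^2=K_{X_m}^2-k=-k$, and then Lemma~\ref{rbd} produces
\[
K_M^2 \;=\; K_X^2+1 \;=\; 1-k.
\]
Thm.~\ref{main} already supplies $\kappa(M)\ge\kappa(X)=1$ (equivalently, Lemma~\ref{Kw} ensures $K_M\cdot\omega_M>0$), so $\kappa(M)\in\{1,2\}$ and the two possibilities are distinguished by the sign of $K_M^2$. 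For $k=0$ one has $K_M^2=1>0$ and hence $\kappa(M)=2$; for $k=1$ one has $K_M^2=0$, which excludes $\kappa(M)=2$ and forces $\kappa(M)=1$.

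The only conceptual step in this plan is verifying minimality of $M$; the remainder is arithmetic. Since that verification is essentially an unpacking of Thm.~\ref{minimal} under the hypothesis $n=0$, and is already implicit in the proofs of Lemmas~\ref{noinf} and \ref{Kw}, I do not anticipate any genuine obstacle beyond confirming the sign of $K_M^2$ in each case.
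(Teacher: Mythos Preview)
Your argument is correct and follows the same route as the paper: compute $K_M^2=K_X^2+1=1-k$ via Lemma~\ref{rbd} and the blow-up formula, invoke Thm.~\ref{main} for the lower bound $\kappa(M)\ge 1$, and read off the answer. The paper's proof is a terse one-line version of this; your additional care in explicitly checking minimality of $M$ via Thm.~\ref{minimal} and justifying $k\le 1$ via Lemma~\ref{n=t} simply makes explicit what the paper leaves to the reader (and to the standing assumptions of the section).
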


Again, examples can be found with these properties:  For $k=0$, the elliptic surface $E(4)$ has $\kappa=1$ and $n=k=0$ while containing a symplectic $-4$-sphere.  This example is described in \cite{G} and the $-4$-blow-down is shown to have $K^2=1$.  The $k=1$ case can again be obtained from the blow-up of a cusp fiber in any of the elliptic surfaces $E(n)$ for $n\ge 3$.  Note that this is again a logarithmic transform of order 2 and the resulting manifold $M=E(n)_2$ is not diffeomorphic to $E(n)\#t\overline{\mathbb CP^2}$ for any $t\ge 0$ (Thm 8.3.12, \cite{GS}) even though there is no change in the Kodaira dimension.

\newpage
\begin{figure}
\begin{math}
\begin{array}{|c|l|c|c|c|}\hline
\mbox{rel. min }k&\xi=(a,b_1,...,b_k)&\mbox{sympl. rep.}&n_{sm}&\\\hline
1&(0,2)&N& &\mbox{non-trivial }S^2\mbox{-bundle}\\\hline
2& (1,2,1)^*&N& &\mbox{Ex. } \ref{large}\\\hline

4&(0,1,1,1,1)^*&N&  &\mbox{Ex. } \ref{large}\\\hline
&(3,2,2,2,1)&&>0 &\mbox{Ex. } \ref{nsy}\\\hline
5&(1,1,1,1,1,1)^*&N& &\mbox{Ex. } \ref{large}\\\hline
&(2,2,1,1,1,1)&N&&\mbox{Ex. } \ref{large}\\\hline
&(4,2,2,2,2,2)&&  >0&\mbox{Ex. } \ref{mult}\\\hline
&(5,4,2,2,2,1)&N& &\mbox{Ex. } \ref{large}\\\hline
6&(4,2,2,2,2,1,1)&& >0&\mbox{Ex. } \ref{nsy}\\\hline
&(4,3,2,2,1,1,1)&& >0&\mbox{Ex. } \ref{nsy}\\\hline
&(5,3,2,2,2,2,2)&&>0&\mbox{Ex. } \ref{nsy} \\\hline
&(7,6,2,2,2,2,1)&N&&\mbox{Ex. } \ref{large}\\\hline
7&(3,2,2,1,1,1,1,1)&N&&\mbox{Ex. } \ref{large}\\\hline
&(6,4,2,2,2,2,2,2)&& >0&\mbox{Ex. } \ref{mult}\\\hline
&(6,5,2,2,2,1,1,1)&N&&\mbox{Ex. } \ref{large}\\\hline
&(9,8,2,2,2,2,2,1)&N&&\mbox{Ex. } \ref{large}\\\hline
8&(2,1,1,1,1,1,1,1,1)^*&N& &\mbox{Ex. } \ref{large}\\\hline
&(4,2,2,2,2,1,1,1,1)&N&&\mbox{Ex. } \ref{large}\\\hline
&(5,2,2,2,2,2,2,2,1)&N&&\mbox{Ex. } \ref{large}\\\hline
&(5,4,2,2,1,1,1,1,1)&& >0&\mbox{Ex. } \ref{nsy}\\\hline

&(7,5,2,2,2,2,2,2,2)&& >0&\mbox{Ex. } \ref{nsy}\\\hline
&(8,7,2,2,2,2,1,1,1)&N&&\mbox{Ex. } \ref{large}\\\hline

&(11,10,2,2,2,2,2,2,1)&N&&\mbox{Ex. } \ref{large}\\\hline
9&(5,3,2,2,2,2,1,1,1,1)&N&&\mbox{Ex. } \ref{9}\\\hline


\end{array}
\end{math}
\caption{\label{orbits}Orbits of $-4$-classes under the action of the group of automorphisms of $H_2$ for small blow-ups ($k\le 9$) for rational manifolds.  The class $\xi$ is a simplified representative for the orbit.  Under the assumption $b_i=1\mapsto \tilde b_i=-1$, the results from this table fall into two categories: Hypersurfaces with $n_{sm}>0$ in the fourth column, which may or may not be symplectically representable, and hypersurfaces which are not symplectically representable ($N$ in the third column).  Some classes are known to be symplectically representable ($*$) but have $n_{sm}>0$ (under the asumption $b_i=1\mapsto \tilde b_i=1$, see Ex. \ref{sym}, Section \ref{rat}).   }

\end{figure}

\end{document}